\documentclass[11pt]{article}
\providecommand\IfDocumentMetadataT[1]{}

\usepackage{verbatim}

\usepackage[T1]{fontenc}
\usepackage{lmodern} 
\usepackage[a4paper,margin=1.1in]{geometry}

\usepackage{amsmath,amssymb,amsfonts,amsthm}
\usepackage{thmtools}
\usepackage{thm-restate}
\usepackage{graphicx}
\usepackage{float}
\usepackage{caption}
\usepackage{subcaption}
\usepackage{forest}
\usepackage{centernot}
\usepackage{xcolor}
\usepackage{microtype}
\usepackage{xurl}

\usepackage{tikz}
\tikzset{vertex/.style={circle,draw,inner sep=1.5pt}}
\usetikzlibrary{
  arrows.meta,
  decorations.pathreplacing
}

\usepackage[
  backend=biber,
  style=alphabetic,
  sorting=nty,
  giveninits=true,
  maxbibnames=99,
  doi=true,
  isbn=false,
  url=false,
  eprint=true
]{biblatex}
\renewbibmacro*{in:}{%
  \ifentrytype{article}
    {}
    {\printtext{\bibstring{in}\intitlepunct}}}
\DeclareNameAlias{author}{given-family}
\DeclareNameAlias{editor}{given-family}
\AtBeginBibliography{\emergencystretch=3em\color{black}\hypersetup{allcolors=black}}
\usepackage[
  colorlinks=true,
  linkcolor=blue,
  citecolor=blue,
  urlcolor=blue
]{hyperref}
\declaretheorem[
  name=Theorem,
  numberwithin=section
]{theorem}

\declaretheorem[
  sibling=theorem,
  name=Conjecture
]{conjecture}

\declaretheorem[
  sibling=theorem,
  name=Definition,
  style=definition
]{definition}

\declaretheorem[
  sibling=theorem,
  name=Question,
  style=definition
]{question}

\declaretheorem[
  sibling=theorem,
  name=Remark,
  style=remark
]{remark}

\title{Typical properties of countable graphs: flows and bridges}

\author{%
  Robert \v{S}\'amal\thanks{Computer Science Institute of Charles University,
  Prague, Czech Republic. E-mail: \texttt{samal@iuuk.mff.cuni.cz}.
  Supported by grant 25-16627S of the Czech Science Foundation.}
  \and
  Hadi Zamani\thanks{Computer Science Institute of Charles University, Prague,
  Czech Republic. E-mail: \texttt{hadiz@iuuk.mff.cuni.cz}.
  Supported by grant 25-16627S of the Czech Science Foundation, and grant
  SVV--2025--260822 of Charles University.}
}

\date{}

\begin{document}

\maketitle

\begin{abstract}
    This paper has two aims. First, we study nowhere-zero flows in countably infinite graphs,
    observing the different roles played by two types of bridge. Second, we ask how typical graphs with such bridges are. 
    Since there is no canonical probability measure on countable graphs, we explore an analytic approach
    and study which graph classes are nowhere dense and which are meager for two different 
    definitions of a distance. We characterize in this sense graphs with a bridge (separating a finite component, 
    or two infinite ones), $D$-edge-colorable graphs, etc.
\end{abstract}
\section{Introduction}

In the study of flows on finite graphs, bridges play a crucial role: every flow
assigns the value zero to each bridge. It is also well known that graphs with a
bridge are atypical: both dense random graphs and random regular graphs are bridgeless with high probability (whp)
\cite{erdHos1961strength,frieze2016introduction}. In this paper, we investigate
how these phenomena extend to countable graphs.

In Section~\ref{sec:flows}, we explain how the relationship between bridges and
flows differs in the infinite setting. Using compactness, we prove a version of
Seymour's 6-flow theorem for countable graphs.

In the rest of the paper, we investigate which properties of countable graphs are typical. 
We first explain why we explore an analytic (Baire category) notion of
typicality. Extending the Erdős--Rényi model to countably infinite
graphs almost surely yields the Rado graph, which follows by the standard back-and-forth argument
from the fact that the countable random graph almost surely satisfies the extension property~\cite{ER63}. 
In other words, up to isomorphism, the resulting probability measure is concentrated on a single graph.

One might instead try to extend the model of random $d$-regular graphs, for
instance, via the configuration model~\cite{bollobas1980probabilistic}. This
would require a uniform probability distribution on the set of perfect matchings
of a given countable set and, in particular, a uniform probability distribution on a
countable set itself (given by the distribution of neighbors of the first
vertex). Such a distribution does not exist, so this approach also fails. 

The above arguments concern permutation-invariant probability laws on a
fixed countable vertex set. Probability measures on rooted isomorphism
classes exist and are in fact standard in the study of local weak convergence and
unimodular random graphs~\cite{benjamini_schramm,aldous2007processes}. In particular, local weak limits of
finite graphs rooted at a uniformly chosen vertex give probability measures on
rooted graph spaces, and these measures satisfy the mass-transport principle. In
this paper, rather than choosing a particular law, we study a notion of
typicality determined by the local topology itself.

Consequently, we adopt the Baire category framework, familiar from real
analysis, as an alternative notion of typicality: a property is called
\emph{typical} if the class of graphs satisfying it is comeager (also called residual), and
\emph{atypical} if this class is meager. 

The use of Baire category in the study of infinite graphs and related graph limit objects is not new. 
The Rado graph also has a comeager isomorphism class in the space of graphs on a fixed countable vertex set; see 
\cite[Section 1.7]{cameron2013random}.
More substantially, Truss studied the typical automorphism of this Rado graph in this sense of residual subsets of $Aut(R)$ \cite{Truss}. 
Baire category methods are also used in descriptive combinatorics to study Baire-measurable colorings
and matchings on Borel graphs~\cite{conley2016brooks,marks2016baire}. In
graph limit theory, Lovász and Szegedy~\cite{lovasz2011finitely}
proved that finitely forcible graphons form a meager class in the $L^2$ topology. 
Barral Lijó and Nozawa~\cite{BLNchaos} showed genericity of their ``almost chaotic'' property
of rooted graphs. 

There is also a precise connection with probability. 
If \(X\) is a compact metric space and \(A\subseteq X\) is Borel, then \(A\) is meager if and only if
the set $\{\mu\in\mathcal P(X):\mu(A)=0\}$ is comeager in the space \(\mathcal P(X)\) of Borel probability measures equipped with the weak
topology~\cite[Theorem~3.16 and~3.17]{dubins1964measurable}. 
Applied to the compact rooted spaces of Section~\ref{sec:metric}, 
the category results also describe almost-sure behavior for a generic probability law, though not necessarily for any specified law.
On the other hand, this notion of typicality need not agree with the one known from familiar
random graph models. Uniform random finite \(D\)-regular graphs converge locally
to the infinite \(D\)-regular tree~\cite{vanderhofstad2017random}, whose edges are all two-way infinite bridges
when \(D\ge3\). In contrast, Corollary~\ref{rootedinfinitebridgelessset} shows that graphs containing a two-way
infinite bridge form a meager class in our rooted spaces.

We study meager and comeager sets defined by two different notions of closeness. 
In Section~\ref{sec:metric} we investigate typical properties of rooted graphs of bounded degree and achieve several 
positive, if surprising, results; e.g., we show that graphs with a bridge are typical
(in the maximum-degree space and in the odd-degree regular space), while graphs 
with a two-way infinite bridge (bridge separating two infinite components) are atypical. 
Our metric here is motivated by Benjamini--Schramm convergence~\cite{benjamini_schramm}: 
we consider graphs close if they are isomorphic in a large neighborhood of the root. 
To a graph theorists the meager property may sound unfamiliar; there is, however, a natural 
combinatorial interpretation in terms of infinite games (see Gr\"adel~\cite{Gradel} for details). 
Fix one of the rooted graph spaces considered in Section~\ref{sec:metric}. 
Two players agree on a graph property $\cal P$ and take turns building an
infinite graph. Each turn is an extension of some $r$-neighborhood of the root
to $r'$-neighborhood (for some $r'>r$) that has an infinite extension in the appropriate rooted space. 
After countably many moves, they will
have created an infinite graph; if it satisfies $\cal P$, the starting player wins, otherwise the second player wins. 
By the Banach--Mazur theorem (see \cite[Theorem 4]{Gradel}), the second player has a winning strategy 
if and only if $\cal P$ is meager. 

The properties we discuss are independent of the choice of the root. Still, the choice of root does 
affect the metric, so  in Section~\ref{sec:unrooted}, we turn to spaces of unrooted graphs (also with
bounded degree) and discover that the situation is more complicated. 
We use here the Hausdorff metric derived from the rooted metric and observe that this 
defines the naive convergence of Elek~\cite{elek2018qualitative}. 
We show that there are strongly isolated infinite
graphs, that is, graphs separated from every non-isomorphic graph by a
fixed positive distance. Any class containing such a graph is nonmeager.
For the bridge properties considered, we construct strongly isolated
examples both satisfying and violating the property, whenever permitted by
the relevant degree bounds. Consequently, in the corresponding unrooted
spaces, these properties are neither meager nor comeager, so the Baire
category framework does not classify them as typical or atypical.

\section{Nowhere-zero flows for infinite graphs}
\label{sec:flows}

\begin{definition}[Weak and strong flows]
Let $G$ be a locally finite graph with a fixed orientation $\vec{G}$, and let
$A$ be an abelian group. For a vertex $v$, let $\delta^+(v)$ and $\delta^-(v)$
denote the sets of edges directed away from and toward $v$, respectively. A
\emph{weak $A$-flow} is a function $\varphi\colon E(G)\to A$ satisfying
Kirchhoff's law at every vertex:
\[
  \sum_{e\in\delta^+(v)}\varphi(e) = \sum_{e\in\delta^-(v)}\varphi(e)
  \qquad\forall\, v\in V(G).
\]
For a set $S\subseteq V(G)$, let $\delta^+(S)$ (resp.\ $\delta^-(S)$) denote
the edges directed from $S$ to $V(G)\setminus S$ (resp.\ the reverse). A
\emph{strong $A$-flow} is a function $\varphi\colon E(G)\to A$ satisfying
Kirchhoff's law across every finite cut: whenever
$|\delta^+(S)|+|\delta^-(S)|<\infty$,
\[
  \sum_{e\in\delta^+(S)}\varphi(e) = \sum_{e\in\delta^-(S)}\varphi(e).
\]
Either flow is \emph{nowhere-zero} if $\varphi(e)\neq 0$ for all $e\in E(G)$.
For an integer $k\ge 2$, a (weak or strong) nowhere-zero \emph{$k$-flow} is a
(weak or strong) nowhere-zero $\mathbb{Z}_k$-flow.
\end{definition}

\begin{remark}
\label{rem:integerflows}
For finite graphs, a classical theorem of
Tutte~\cite{tutte1949imbedding} states that a nowhere-zero
$\mathbb{Z}_k$-flow exists if and only if a nowhere-zero integer-valued
flow with values in $\{\pm 1,\ldots,\pm(k-1)\}$ exists.
\end{remark}

\begin{definition}[Bridges in infinite graphs]
Let $G$ be an infinite connected graph. An edge $e\in E(G)$ is a
\emph{bridge} if $G-e$ is disconnected. A bridge is \emph{two-way infinite}
if both components of $G-e$ are infinite, and \emph{one-way infinite}
otherwise.
\end{definition}

Every strong flow is a weak flow. The two notions coincide for finite
graphs, but they diverge in the infinite setting. For example, a two-way
infinite path admits a weak nowhere-zero $A$-flow for every nontrivial
abelian group $A$, whereas it admits no strong nowhere-zero $A$-flow.

\begin{figure}
    \centering
\definecolor{figsolid}{HTML}{0072B2} 
\definecolor{figdashed}{HTML}{E69F00} 
\definecolor{figdotted}{HTML}{009E73} 

\begin{tikzpicture}[scale=.7,
  base edge/.style={line width=1.15pt,line cap=round,line join=round},
  solid edge/.style={base edge,draw=figsolid},
  dashed edge/.style={base edge,draw=figdashed,dashed},
  dotted edge/.style={base edge,draw=figdotted,densely dotted},
  vertex/.style={circle,fill=black,inner sep=2.8pt}
]

\def\n{4}
\def\step{4.2}

\def\yu{1.8}
\def\ym{0.9}
\def\yl{0.0}
\def\yt{3.0}
\def\yb{-1.25}

\def\r{1.05}
\pgfmathsetmacro{\cy}{\ym-\r*sin(18)}

\pgfmathsetmacro{\half}{\step/2}
\pgfmathsetmacro{\botL}{\half-0.9}
\pgfmathsetmacro{\botR}{\half+0.9}

\foreach \k in {0,1,2,3,4}{
  \pgfmathsetmacro{\x}{\k*\step}

  \coordinate (SU\k) at (\x,\yu);
  \coordinate (SL\k) at (\x,\yl);

  \draw[dotted edge] (SU\k)--(SL\k);
}

\foreach \k in {0,1,2,3}{
  \pgfmathsetmacro{\x}{\k*\step}
  \pgfmathsetmacro{\m}{\x+\half}

  \coordinate (T\k)  at (\m,\yt);
  \coordinate (BL\k) at (\x+\botL,\yb);
  \coordinate (BR\k) at (\x+\botR,\yb);

  \coordinate (C\k)  at (\m,\cy);
  \coordinate (P\k1) at ($(C\k)+(90:\r)$);
  \coordinate (P\k2) at ($(C\k)+(18:\r)$);
  \coordinate (P\k3) at ($(C\k)+(-54:\r)$);
  \coordinate (P\k4) at ($(C\k)+(-126:\r)$);
  \coordinate (P\k5) at ($(C\k)+(162:\r)$);

  \pgfmathtruncatemacro{\kp}{\k+1}
  \draw[dashed edge] (SU\k)--(T\k);
  \draw[solid edge] (T\k)--(SU\kp);
  \draw[solid edge] (SL\kp)--(BR\k);
  \draw[dotted edge] (BR\k)--(BL\k);
  \draw[dashed edge] (BL\k)--(SL\k);

  \draw[solid edge] (P\k1)--(P\k3);
  \draw[dotted edge] (P\k3)--(P\k5);
  \draw[solid edge] (P\k5)--(P\k2);
  \draw[dotted edge] (P\k2)--(P\k4);
  \draw[dashed edge] (P\k4)--(P\k1);

  \draw[dotted edge] (T\k)--(P\k1);
  \draw[dashed edge] (P\k3)--(BR\k);
  \draw[solid edge] (P\k4)--(BL\k);
}

\draw[dashed edge,<-] (-1.8,\ym)--(P05);
\draw[dashed edge] (P02)--(P15);
\draw[dashed edge] (P12)--(P25);
\draw[dashed edge] (P22)--(P35);
\draw[dashed edge,->] (P32)--(4*\step+1.8,\ym);

\draw[solid edge,<-] (-1.8,\yu)--(SU0);
\draw[solid edge,<-] (-1.8,\yl)--(SL0);

\draw[dashed edge,->] (SU4)--(4*\step+1.8,\yu);
\draw[dashed edge,->] (SL4)--(4*\step+1.8,\yl);

\foreach \k in {0,1,2,3,4}{
  \node[vertex] at (SU\k) {};
  \node[vertex] at (SL\k) {};
}

\foreach \k in {0,1,2,3}{
  \node[vertex] at (T\k) {};
  \node[vertex] at (BL\k) {};
  \node[vertex] at (BR\k) {};

  \foreach \i in {1,2,3,4,5}{
    \node[vertex] at (P\k\i) {};
  }
}

\end{tikzpicture}
\caption{A $3$-connected two-ended infinite graph $G$ with no strong nowhere-zero $4$-flow, but with a weak nowhere-zero $4$-flow (Miraftab and Moghadamzadeh~\cite{miraftaba2017algebraic}).
}
\label{fig:weakflow}
\end{figure}

Miraftab and Moghadamzadeh~\cite{miraftaba2017algebraic} developed an
algebraic flow theory for infinite graphs which, in the terminology of the
present paper, corresponds to our strong-flow notion (called non-elusive flows in their paper). 
They constructed a $3$-connected two-ended infinite graph $G$ (Figure~\ref{fig:weakflow}) with
no strong nowhere-zero $4$-flow, although the same graph admits a weak
nowhere-zero $4$-flow in our sense. Using a compactness argument, they extended several classical results of finite flow theory to infinite graphs. In their more general framework, they proved that every bridgeless graph,
without assuming local finiteness, admits a strong nowhere-zero
$6$-flow~\cite[Theorem~25]{miraftaba2017algebraic}.

We prove a weak-flow counterpart to their result. By weakening the
hypothesis to forbid only one-way infinite bridges and applying logical
compactness, we show that such graphs always admit a weak nowhere-zero $6$-flow.

\begin{restatable}{theorem}{infbridgeless}
\label{infbridgeless}
A connected countably infinite, locally finite graph containing no one-way infinite
bridge admits a weak nowhere-zero $\mathbb{Z}_6$-flow.
\end{restatable}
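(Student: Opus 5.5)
The plan is to reduce to Seymour's 6-flow theorem for finite bridgeless graphs via finite approximations and a compactness argument. Since $G$ is countable, locally finite and connected, fix a root $v_0$ and let $B_n$ be the ball of radius $n$ around $v_0$; each $B_n$ is finite. By logical compactness (equivalently, König's lemma, since each edge has only finitely many possible values in $\mathbb{Z}_6$), it suffices to show that for every $n$ there is a nowhere-zero function $\varphi_n\colon E(G)\to\mathbb{Z}_6$ satisfying Kirchhoff's law at every vertex of $B_n$. Indeed, weak flows only impose local constraints, one finite constraint per vertex involving finitely many edges. A diagonal argument then extracts a function that agrees eventually with the $\varphi_n$ on each finite edge set and satisfies all the vertex constraints.

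To produce $\varphi_n$, I will build a finite graph $H_n$ as follows. Let $F$ be the finite set of edges incident to $B_n$ and $W$ the set of endpoints of these edges outside $B_n$. Consider the components of $G-B_n$. Each finite component $C$ is kept as it is. The infinite components (finitely many, since $W$ is finite) have all their vertices identified into a single new vertex $z$. Equivalently, contract each infinite component of $G-B_n$ to a point and then identify these points. Kirchhoff's law is not required at $z$ in $G$, but any flow on $H_n$ is conservative at $z$ as well; this is harmless. A nowhere-zero $\mathbb{Z}_6$-flow on the finite graph $H_n$ pulls back to a function on $E(G)$. It is defined on $F$ and on the edges of the finite components. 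On the edges inside the infinite components we assign the value $1$ arbitrarily, since no constraint is needed there for $B_n$. This $\varphi_n$ is nowhere-zero and satisfies Kirchhoff's law at every vertex of $B_n$ and of the finite components.

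The main step is to check that $H_n$ is bridgeless, so that Seymour's theorem applies. Loops created at $z$ can be removed or assigned any nonzero value. Suppose $e$ is a bridge of $H_n$, and let $(X,Y)$ be the corresponding cut, with $z\in Y$ say. Then $X$ is finite. Its preimage $X'$ in $G$ is a finite vertex set, because $X$ avoids $z$ and consists only of vertices of $B_n$ and of finite components. Moreover $\delta_G(X')=\{e\}$, since every edge of $G$ leaving $X'$ corresponds to an edge of $H_n$ leaving $X$. Hence $e$ is a bridge of $G$ with a finite side. It is therefore a one-way infinite bridge, contradicting the hypothesis. Note that a finite side cannot be all of $G$, since $G$ is infinite. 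If $z$ does not exist (no infinite component), the same argument applies with either side. Two-way infinite bridges of $G$ cause no trouble: in $H_n$ both sides contain infinite components merged into $z$, so they become non-bridges. This is exactly why only one-way infinite bridges must be forbidden.

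I expect the only delicate point to be this bridgelessness check, specifically verifying that identifying all infinite components into one vertex creates no new bridge and that every bridge of $H_n$ lifts to a finite-sided cut in $G$. The compactness step is routine. If preferred, we can phrase it through Tychonoff's theorem on $\mathbb{Z}_6^{E(G)}$: the sets of nowhere-zero assignments satisfying Kirchhoff's law on $B_n$ are nonempty, closed and nested, so their intersection is nonempty, and any element of it is the desired weak nowhere-zero $\mathbb{Z}_6$-flow.
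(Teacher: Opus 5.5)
Your proposal is correct and follows essentially the paper's route: finite bridgeless quotients of $G$ around growing balls, Seymour's $6$-flow theorem on each, and a compactness step (the paper uses propositional compactness where you use Tychonoff or a diagonal argument, which is equivalent). The only difference is cosmetic: the paper contracts all of $V(G)\setminus V(B_r)$ to a single vertex $v_\infty$ instead of keeping the finite components of $G-B_n$ intact, and your bridge argument applies to that quotient as well (the side of a bridge avoiding $v_\infty$ is finite, so it yields a one-way infinite bridge of $G$), so merging only the infinite components is not needed.
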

The excluded bridge is also a necessary obstruction: summing the
vertex equations for vertices of its finite side forces its value to be zero.
 
\section{Metric spaces of rooted connected graphs}
\label{sec:metric}

To study which properties are typical, we follow an approach common in
analysis: we use a metric space of graphs and study nowhere dense and meager
sets, which we now define precisely.

The \emph{$\varepsilon$-neighborhood} of a point $x$ in a metric space
$(X,d)$ is defined as
\(
\mathcal{U}_{\varepsilon}(x) = \{ y \in X \mid d(x, y) < \varepsilon \}.
\)
Similarly, the \emph{closed $\varepsilon$-neighborhood}
$\overline{\mathcal{U}}_{\varepsilon}(x)$ is the set of all points
satisfying \( d(x, y) \le \varepsilon \).

A subset \( U \subseteq X \) of a metric space \( (X, d) \) is \emph{open}
if and only if for every point \( x \in U \) there exists
\( \varepsilon > 0 \) such that
\( \mathcal{U}_{\varepsilon}(x) \subseteq U \).

A subset \(S\subseteq X\) is \emph{nowhere dense} if and only if for every
non-empty open set \(U\subseteq X\) there exists a non-empty open set
\(U'\subseteq U\) such that \( U'\cap S=\emptyset \). Equivalently, for
every non-empty open neighborhood \(\mathcal{U}_{\varepsilon}(x)\) there
exists a non-empty open neighborhood
\(\mathcal{U}_{\delta}(y)\subseteq \mathcal{U}_{\varepsilon}(x)\) such that
\( \mathcal{U}_{\delta}(y)\cap S=\emptyset \).

A set is \emph{meager} if it can be written as a countable union of nowhere
dense sets. A set is \emph{comeager} if its complement is meager.

\begin{definition}[Spaces of rooted graphs and rooted digraphs]
Let $\mathfrak{G}^\bullet$ and $\vec{\mathfrak{G}}^\bullet$ denote the
spaces of connected rooted graphs and connected rooted digraphs,
respectively, considered up to root-preserving isomorphism. For $D \ge 3$,
let $\mathfrak{G}_{\le D}^\bullet$ denote the subspace of rooted graphs
with maximum degree at most $D$, and let $\mathfrak{G}_{=D}^\bullet$ denote
the subspace of rooted graphs in which every vertex has degree exactly $D$.
The superscript $\bullet,\infty$ indicates the corresponding subspace of
infinite rooted graphs. The analogous notation for rooted digraphs is
obtained by replacing $\mathfrak{G}$ with $\vec{\mathfrak{G}}$, where
degree means total degree.
\end{definition}

Throughout this section, we fix $D \ge 3$. Let $\mathfrak{X}$ be a rooted
graph space. For an integer $r \ge 0$, an \emph{$r$-ball} in $\mathfrak{X}$ is the rooted induced
subgraph of some $(G,x)\in \mathfrak{X}$ obtained by restricting $G$ to the
vertices at distance at most $r$ from the root $x$:
\[
B_r(G,x)=\bigl(G[\{ y \in V(G) \mid d_G(x,y)\le r\}],\,x\bigr).
\]

\begin{definition}[The metric space $(\mathfrak{G}^{\bullet}_{\le D},d^{\bullet})$]
For $(G_1,v_1),(G_2,v_2)\in\mathfrak{G}^{\bullet}_{\le D}$, define
\[
d^{\bullet}\bigl((G_1,v_1),(G_2,v_2)\bigr)
:=\inf\left\{2^{-r}
 \;\middle|\; B_{r}(G_1,v_1)\cong B_{r}(G_2,v_2)\right\}.
\]
\end{definition}

The space $(\mathfrak{G}^{\bullet}_{\le D}, d^{\bullet})$ is a metric space
\cite{lovasz2012large}; in particular, this is also true for
$(\mathfrak{G}^{\bullet,\infty}_{\le D}, d^{\bullet})$.

\begin{definition}[Extensions of $F$]
For an $r$-ball $F = B_r(G,v)$ with $(G,v) \in \mathfrak{G}^{\bullet}_{\le D}$,
define the set of \emph{extensions} of $F$ by
\[
\mathfrak{G}^{\bullet}_{F}
:=
\left\{
(G',v') \in \mathfrak{G}^{\bullet}_{\le D}
\;\middle|\;
B_r(G',v') \cong F
\right\}.
\]
Similarly, define
\[
\mathfrak{G}^{\bullet,\infty}_{F}
:=
\left\{
(G',v') \in \mathfrak{G}^{\bullet,\infty}_{\le D}
\;\middle|\;
B_r(G',v') \cong F
\right\}.
\]
\end{definition}

For an $r$-ball $F$, define \(L_r(F)\) to be the set of vertices of $F$ at
distance exactly $r$ from the root, and define
\(L_r^{<D}(F):=\{v\in L_r(F): \deg_F(v)<D\}\). Define
\(S(F):=\sum_{v\in L_r(F)}\bigl(D-\deg_F(v)\bigr)\).

\begin{restatable}{lemma}{basiclemma}
\label{basiclemma}
Let $F$ be an $r$-ball. Then:
\begin{itemize}
\item The set $\mathfrak{G}^{\bullet,\infty}_F$ is non-empty if and only if
$L^{<D}_r(F)$ contains at least one vertex.
\item The set
$\mathfrak{G}^{\bullet,\infty}_F \cap \mathfrak{G}^{\bullet}_{=D}$ is
non-empty if and only if $L^{<D}_r(F)$ contains at least one vertex and
every vertex of $V(F) \setminus L^{<D}_r(F)$ has degree $D$.
\end{itemize}
\end{restatable}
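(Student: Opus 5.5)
Both items will be proved by direct construction for the ``if'' directions and by short structural arguments for the ``only if'' directions. Throughout, $F=B_r(G,v)$ with $(G,v)\in\mathfrak{G}^{\bullet}_{\le D}$. For any $(G',v')$ with $B_r(G',v')\cong F$, vertices at distance $<r$ from the root have all their $G'$-neighbours inside the ball, so their degree in $G'$ equals their degree in $F$. Only vertices of $L_r(F)$ can gain new edges, and then only to vertices outside the ball at distance $r+1$. (Edges between two vertices of $L_r(F)$ already lie in the induced subgraph $F$, so none can be added.)

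\emph{Necessity.} For the first item, suppose $(G',v')\in\mathfrak{G}^{\bullet,\infty}_F$. Since $G'$ is infinite, connected and locally finite, $G'$ has a vertex at distance $r+1$ from $v'$. Its neighbour on a shortest path lies in $L_r(F)$ and has degree in $G'$ strictly larger than its degree in $F$. Hence its $F$-degree is $<D$, so $L^{<D}_r(F)\neq\emptyset$. For the second item we additionally have $G'$ $D$-regular. Every vertex at distance $<r$ has $\deg_F=\deg_{G'}=D$, and every vertex of $L_r(F)\setminus L^{<D}_r(F)$ has $F$-degree $D$ by definition. So every vertex of $V(F)\setminus L^{<D}_r(F)$ has degree $D$.

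\emph{Sufficiency for the first item.} Pick $u\in L^{<D}_r(F)$ and attach to $u$ a new one-way infinite path $u=p_0,p_1,p_2,\dots$. The resulting graph is connected and infinite, and has maximum degree $\le D$ since $u$ gains only one edge and path vertices have degree $\le 2\le D$. Its $r$-ball is still $F$, because the new vertices lie at distance $\ge r+1$ from the root and add no edges among old vertices.

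\emph{Sufficiency for the second item.} Here every vertex with $\deg_F<D$ lies in $L_r(F)$, and we must raise each such vertex to degree exactly $D$ using only new vertices, keeping $D$-regularity and infiniteness. The cleanest choice is to hang rooted trees. For each $w\in L^{<D}_r(F)$ and each of its $D-\deg_F(w)\ge 1$ missing edges, attach a new copy of the infinite rooted tree $T_D$ in which the root has degree $D-1$ and every other vertex has degree $D$ (one child edge upward, $D-1$ children). Joining $w$ to the root of each copy makes that root have degree $D$. Every other tree vertex already has degree $D$, and $w$ ends with exactly $D$. The result is connected, $D$-regular and infinite; since $L^{<D}_r(F)\ne\emptyset$ at least one tree is attached. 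As before, all new vertices are at distance $\ge r+1$ and no new edges join old vertices, so the $r$-ball is unchanged and the graph lies in $\mathfrak{G}^{\bullet,\infty}_F\cap\mathfrak{G}^{\bullet}_{=D}$.

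\emph{Main obstacle.} The only delicate point is verifying that the $r$-ball of the extended graph is exactly $F$ up to rooted isomorphism. This requires that distances from the root among old vertices do not shrink, and that no new vertex enters the ball. Both hold because every new edge goes from $L_r(F)$ (or from a new vertex) to a new vertex, so any path through new vertices re-entering the old part must pass through $L_r(F)$ twice, while new vertices themselves are at distance $\ge r+1$.
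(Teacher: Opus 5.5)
Your proof is correct and follows the paper's approach. The paper also calls the necessity direction immediate, and proves sufficiency by attaching, at each $u\in L^{<D}_r(F)$, $D-\deg_F(u)$ copies of the infinite tree whose root has degree $D-1$ and all other vertices degree $D$. The paper uses this one construction for both items, while you use a single pendant ray for the first item (valid since $D\ge 3$), and you spell out the necessity argument the paper leaves to the reader.
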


\begin{remark}
\label{rem:rootedbasics}
\begin{itemize}
\item The sets \(\mathfrak{G}^{\bullet}_{F}\) are both closed and open;
they form an open basis, and the space
\((\mathfrak{G}^{\bullet}_{\le D}, d^{\bullet})\) is compact and totally
disconnected~\cite{lovasz2012large}.
\item For each rooted graph \((G, x) \in \mathfrak{G}^{\bullet}_{\le D}\) and each natural number $r$,
we have
\[
\mathcal{U}_{2^{-(r-1)}}\bigl( (G, x)\bigr)
= \overline{\mathcal{U}}_{2^{-r}}\bigl( (G, x)\bigr)
= \mathfrak{G}^{\bullet}_{B_r(G, x)}.
\]
\item Let \(\mathcal{X}\subseteq \mathfrak{G}^{\bullet}_{\le D}\) be a
    subspace. A subset \(S\subseteq \mathcal{X}\) is nowhere dense in
    \(\mathcal{X}\) if and only if for every \(r\)-ball \(F\) with
    \((\mathfrak{G}^{\bullet}_{F}\cap \mathcal{X})\neq\emptyset\), there
    exists an \(r'\)-ball \(F'\) such that
    \[
    \emptyset\neq
    (\mathfrak{G}^{\bullet}_{F'}\cap \mathcal{X})
    \subseteq
    \bigl(\mathfrak{G}^{\bullet}_{F}\cap \mathcal{X}\bigr)\setminus S.
    \]
\item The spaces
    \(
    \mathfrak{G}^{\bullet}_{=D},
    \mathfrak{G}^{\bullet,\infty}_{\le D},
    \mathfrak{G}^{\bullet,\infty}_{=D}
    \)
    are closed subspaces of
    \((\mathfrak{G}^{\bullet}_{\le D}, d^{\bullet})\); for
    \(\mathfrak{G}^{\bullet,\infty}_{\le D}\) and
    \(\mathfrak{G}^{\bullet,\infty}_{=D}\) this follows from
    Lemma~\ref{basiclemma}. Hence they are compact and totally disconnected,
    and therefore, by the Baire Category Theorem, none of them is meager in
    itself.
\item The description of the extensions is the basis of the interpretation of our results in terms of Banach--Mazur games 
    (see the introduction). 
\item  Another use of the extension process is a definition of a probability
    measure on the rooted space: we can assign transition probabilities to
    ``one-step'' admissible extensions of finite rooted balls. A nested infinite
    sequence of balls determines a rooted graph. The assignment of a positive probability to
    every admissible one-radius extension gives a measure with full support.
    The resulting law depends on the transition probabilities and need not be
    unimodular. We do not pursue this approach further. 
    For the representation of rooted graphs by infinite branches in a tree of finite ball types, see~\cite{lehner2023note}. 
\end{itemize}
\end{remark}

\begin{restatable}{theorem}{rootedbridgelessset}
\label{rootedbridgelessset}
The set of graphs without a one-way infinite bridge is nowhere dense
\begin{itemize}
    \item in $\mathfrak{G}^{\bullet,\infty}_{\le D}$ for all $D\ge3$, and
    \item in $\mathfrak{G}^{\bullet,\infty}_{= D}$ for all odd $D\ge3$.
\end{itemize}
\end{restatable}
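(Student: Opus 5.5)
We use the criterion of Remark~\ref{rem:rootedbasics}. Let $\mathcal X$ be either $\mathfrak{G}^{\bullet,\infty}_{\le D}$ or $\mathfrak{G}^{\bullet,\infty}_{=D}$ with $D$ odd, and let $S\subseteq\mathcal X$ be the set of graphs with no one-way infinite bridge. Given an $r$-ball $F$ with $\mathfrak{G}^{\bullet}_F\cap\mathcal X\neq\emptyset$, we will construct an $r'$-ball $F'$, $r'>r$, such that $\mathfrak{G}^{\bullet}_{F'}\cap\mathcal X$ is nonempty and every graph in it has a one-way infinite bridge. Since $F'$ is a ball of such a graph, it extends $F$, so $\mathfrak{G}^{\bullet}_{F'}\subseteq\mathfrak{G}^{\bullet}_{F}$. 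The key point is that once the finite side of the bridge is entirely contained in $B_{r'}$, together with all its edges, the bridge is witnessed by every extension.

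\textbf{Step 1: choose the boundary vertex.} By Lemma~\ref{basiclemma}, $L_r^{<D}(F)$ contains a vertex $v$. If $\deg_F(v)\le D-2$, then $v$ has at least two free slots. One slot will carry a pendant gadget $H$ through a new edge $vw$. The other slots of $v$, and all free slots of the other vertices of $L_r^{<D}(F)$, will continue the graph to infinity, for instance by attaching disjoint one-way infinite $D$-regular-compatible trees.

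If $\deg_F(v)=D-1$ and $v$ is the only vertex with a free slot, we first add one new vertex $u$ adjacent to $v$, at distance $r+1$. This $u$ has $D-1$ free slots. We attach the gadget to $u$ through one slot and use another slot to continue to infinity, and we do the same at radius $r+2$. In the $\le D$ space, if no slot remains besides the gadget's, then the gadget sits at the only continuation. This is exactly why we always spend one slot on a new intermediate vertex, which frees up at least two slots because $D\ge3$.

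\textbf{Step 2: the gadget.} In $\mathfrak{G}^{\bullet}_{\le D}$ we take $H$ to be a single vertex $w$, so $vw$ is a pendant edge. In the $D$-regular case we need a finite graph $H$ in which one vertex $w$ has degree $D-1$ and all others have degree $D$. Such a graph exists exactly when the degree sum $D\cdot|H|-1$ is even, that is, when $D$ is odd and $|H|$ is odd. Explicitly, take $K_{D+1}$, remove a perfect matching edge $ab$ (possible since $D+1$ is even), and then:
\begin{itemize}
\item add a new vertex $w$ joined to $a$ and $b$;
\item add a further copy of the same construction, so that $w$ has degree $D-1$.
\end{itemize}
Equivalently, take $(D-1)/2$ copies of $K_{D+1}$ minus an edge, and join $w$ to both endpoints of each missing edge. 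Then $w$ has degree $D-1$ and all other vertices have degree $D$. This parity requirement is exactly why the regular statement needs $D$ odd.

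\textbf{Step 3: choose $r'$ and conclude.} Let $G_0$ be the graph consisting of $F$, the gadget attached via the edge $vw$, and the infinite continuations. In the regular case, we must also check that the hypotheses of Lemma~\ref{basiclemma} are preserved, so $G_0$ must be $D$-regular everywhere; this holds by construction. Set $r'=r+2+\operatorname{diam}(H)$ plus the one extra layer from Step~1, and let $F'=B_{r'}(G_0,x)$. Then $F'$ contains all of $H$ with full degrees, so in any $(G',x')\in\mathfrak{G}^{\bullet}_{F'}$ the vertices of $H$ have no further neighbours. Their degrees are saturated in the regular case; in the $\le D$ case, a vertex inside the ball has all its neighbours inside the ball, provided it lies at distance $<r'$. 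Hence $vw$ separates the finite set $V(H)$ from the rest of the graph, which is infinite, so $vw$ is a one-way infinite bridge. Nonemptiness follows because $G_0\in\mathfrak{G}^{\bullet}_{F'}\cap\mathcal X$.

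The main obstacle I expect is the bookkeeping that keeps $F'$ extendable inside $\mathcal X$ while the gadget uses a slot. In particular, when $F$ has exactly one free slot, we must route both the gadget and the infinite continuation through a new vertex. We must also make sure that this new vertex does not accidentally create edges among existing layer-$r$ vertices, which would change $B_r$.
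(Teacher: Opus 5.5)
Your argument is correct, but it runs in the opposite direction from the paper's.

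\textbf{Your route.} You keep the root on the infinite side and hang a finite pendant block $H$ off the boundary. You take $r'$ large enough that every vertex of $H$ is at distance $<r'$ from the root, so $F'$ freezes its entire neighbourhood. The parity obstruction then appears only in the existence of $H$, which you realise with $(D-1)/2$ copies of $K_{D+1}-e$ joined to $w$.

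\textbf{The paper's route.} The paper instead makes the bridge the unique edge leaving a finite region \emph{containing the root}. In $\mathfrak{G}^{\bullet,\infty}_{\le D}$ it takes $F'$ to be the $(r+1)$-ball consisting of $F$ and a single new vertex on one free slot, so that this edge is the only exit from $V(F)$. In the odd-regular case it must first absorb all other deficiencies of $F$ with $K_{D+1}-e$ gadgets. That is what forces its case analysis on the parity of $S(F)$ and on $|L^{<D}_r(F)|$.

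\textbf{What each buys.} Your construction avoids that bookkeeping and treats both spaces uniformly. The paper's orientation buys reuse: the endpoint of its bridge on the infinite side is a root-separating cut vertex lying outside $F$. This is exactly what the proofs of Theorem~\ref{cutvertex}, Corollary~\ref{rootedoneended} and Corollary~\ref{rootedinfinitebridgelessset} rely on. Your pendant bridge separates $H$ rather than the root, so it does not yield those results.

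A small remark: when $S(F)=1$, the unique edge leaving $V(F)$ is already a one-way infinite bridge in every infinite extension. Your detour through the auxiliary vertex $u$ is therefore unnecessary there, though harmless.
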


As a corollary, we get that in the bounded-degree rooted space~$\mathfrak{G}^{\bullet,\infty}_{\le D}$, 
and in the odd-degree regular rooted space~$\mathfrak{G}^{\bullet,\infty}_{=D}$, 
a generic graph admits no weak nowhere-zero flow over any abelian group.
On the other hand, for an even~$D$, assigning~1 to every edge gives us a weak nowhere-zero $\mathbb Z_2$-flow. 

The restriction to odd $D$ in the regular case is necessary. Indeed, when
$D$ is even and $e$ is a bridge in a $D$-regular graph $G$, each component
of $G-e$ has one boundary vertex of degree $D-1$, hence of odd degree,
while all its other vertices have degree $D$, hence even degree. The sum of
degrees in such a finite component is odd, so it cannot be finite. Hence one-way
infinite bridges cannot exist in $D$-regular graphs for even $D$. 
The natural analogue of Theorem~\ref{rootedbridgelessset} in this case involves
cut vertices instead. We call a non-root vertex~$v$ of an infinite graph a \emph{root-separating cut-vertex} if $G-v$~has the root in 
a finite component. 

\begin{restatable}{theorem}{cutvertex}
\label{cutvertex}
  The set of graphs without a root-separating cut-vertex is nowhere dense in
  $\mathfrak{G}^{\bullet,\infty}_{\le D}$ and
  $\mathfrak{G}^{\bullet,\infty}_{=D}$ for all $D\ge3$.
\end{restatable}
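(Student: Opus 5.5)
The plan is to verify the criterion of Remark~\ref{rem:rootedbasics} directly. Let $\mathcal X$ be either $\mathfrak{G}^{\bullet,\infty}_{\le D}$ or $\mathfrak{G}^{\bullet,\infty}_{=D}$, and let $F$ be an $r$-ball with $\mathfrak{G}^{\bullet}_F\cap\mathcal X\neq\emptyset$. By Lemma~\ref{basiclemma}, $L^{<D}_r(F)\neq\emptyset$; in the regular case, moreover, every other vertex of $F$ has degree $D$. I will build a finite connected graph $H\supseteq F$ and a new vertex $v\notin V(H)$ with the following properties:
\begin{itemize}
\item $B_r(H,x)=F$, where $x$ is the root;
\item $v$ is adjacent to $k\in\{1,2\}$ vertices of $H$;
\item every vertex of $H$ has degree $D$ in $H+v$, except that in the $\le D$ case interior vertices of $F$ keep their degree from $F$.
\end{itemize}
Then I attach $D-k\ge 1$ disjoint infinite $D$-regular trees to $v$, each by an edge to one of its roots, where that root has $D-1$ children. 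The resulting graph $G$ lies in $\mathfrak{G}^{\bullet}_F\cap\mathcal X$. Take $r'$ larger than the eccentricity of $x$ in $H+v$ and set $F'=B_{r'}(G,x)$. In any $(G',x)\in\mathfrak{G}^{\bullet}_{F'}\cap\mathcal X$, the ball $F'$ already contains every edge at every vertex of $H$. The interior vertices of $F$ with degree below $D$ lie in $B_{r-1}$, so their full neighbourhoods are seen by $F$ itself. Hence every edge leaving $V(H)$ goes to $v$, so $v$ is a root-separating cut-vertex of $G'$ with the finite set $V(H)$ on the root side. This gives $\emptyset\neq\mathfrak{G}^{\bullet}_{F'}\cap\mathcal X\subseteq(\mathfrak{G}^{\bullet}_F\cap\mathcal X)\setminus S$, as required.

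To construct $H$, I first attach to each vertex of $L_r(F)$ enough fresh pendant vertices to raise its degree to $D$. Call the result $K_0$; its only deficient vertices are the new pendant ones, at distance $r+1$ from $x$, together with interior deficient vertices in the $\le D$ case, which I mark as frozen. Next I apply a doubling step: take two disjoint copies $K,K'$ and join each non-frozen deficient vertex $y$ to its copy $y'$. This keeps the maximum degree at most $D$ and lowers every non-frozen deficiency by one. It adds no edge between two vertices of the original $F$, and it adds no edge at a vertex of $F$ other than edges from $L_r$-vertices into their copies, which are already saturated. Hence after at most $D$ doublings I obtain a finite graph $K$ in which all non-frozen vertices have degree exactly $D$ and the $r$-ball of $x$ is still $F$. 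I keep only the component of $K$ that contains $x$.

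To create room for $v$, I delete an edge $ab$ that lies on a cycle and is far from $F$. For example, take an edge inside a copy created in the last doubling, on a cycle of the form $y\,y'$ followed by the two copies of a path; I may first double once more so that such a cycle exists away from $F$. Deleting $ab$ keeps the graph connected and leaves exactly $a,b$ deficient. Setting $H:=K-ab$ and joining $v$ to $a$ and $b$ gives $k=2$, and $v$ then needs $D-2\ge1$ further edges, supplied by the trees. In the $\le D$ space, I could instead simply take $k=1$ by attaching $v$ to a single pendant vertex of $K_0$ whose degree I leave at $D-1$.

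The step I expect to be the main obstacle is this bookkeeping. The completion must not change $B_r$ around the root, and connectivity must survive the edge deletion. Parity is exactly what forced the odd-$D$ restriction in Theorem~\ref{rootedbridgelessset}. Here it is harmless, because $v$ may have two neighbours in $H$ and deleting an edge always creates an even total deficiency of $2$.
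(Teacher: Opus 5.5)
Your proof is correct, but it is organized differently from the paper's.

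\textbf{The paper's route.} It splits into cases. For \(\mathfrak{G}^{\bullet,\infty}_{\le D}\) and for odd \(D\), it reuses the construction behind Theorem~\ref{rootedbridgelessset}: the bridge there has \(F\) on its finite side, so its endpoint on the infinite side is a root-separating cut-vertex. Only for even \(D\) does it build the cut-vertex directly. It adds a new vertex joined to two vertices of \(L_r^{<D}(F)\) and to \(D-2\) new vertices (Figure~\ref{fig:image6}). The remaining deficiency of \(F\) is absorbed by \(K_{D+1}-e\) gadgets, which works because \(S(F)\) is even.

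\textbf{Your route.} Your vertex \(v\) with \(k=2\) is the same configuration. The difference is that you saturate the finite side by repeated doubling and then free exactly two slots by deleting a distant non-bridge edge. Parity never enters, so one argument covers both spaces and every \(D\ge 3\). It also avoids a degenerate case in the paper's even-\(D\) argument: when \(r=0\), the lone vertex of \(L_r^{<D}(F)\) is the root itself, so it cannot be the cut-vertex.

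\textbf{Trade-off.} The paper's route is shorter and gives \(F'\) of radius \(r+2\). Your radius is larger, which does not matter for nowhere density. Your cut-vertex still lies beyond distance \(r\) from the root, which is what Corollary~\ref{rootedoneended} uses.

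\textbf{Two small remarks.}
\begin{enumerate}
\item No extra doubling is needed to find the cycle for the edge deletion. In the regular case, each pendant and its copy stay deficient after the first doubling, since \(D-1\ge 2\). Hence the last doubling matches at least \(2^{D-2}\ge 2\) pairs, so your cycle always exists. Doubling a graph that is already \(D\)-regular would only disconnect it.
\item In \(\mathfrak{G}^{\bullet,\infty}_{\le D}\) no saturation is needed at all. Your final step only uses that \(F'\) sees every edge at the vertices of \(H\).
\end{enumerate}
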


Note, that this in particular implies that a generic graph has \emph{some} cut vertex whose 
deletion leaves a finite component. This property is independent of the choice of the root 
(unlike the existence of the root-separating cut-vertex). 
Two other corollaries are as follows.

\begin{restatable}{corollary}{rootedoneended}
  \label{rootedoneended}
  The set of graphs with one end is comeager in
  $\mathfrak{G}^{\bullet,\infty}_{\le D}$ and
  $\mathfrak{G}^{\bullet,\infty}_{=D}$ for all $D\ge3$.
\end{restatable}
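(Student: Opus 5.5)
Since a countable union of nowhere dense sets is meager, we write the set of graphs with at least two ends (equivalently, not one-ended, since every infinite locally finite connected graph has at least one end) as a countable union $\bigcup_{r\ge 0}\mathcal{M}_r$ and prove each $\mathcal{M}_r$ nowhere dense. Let $\mathcal{M}_r$ be the set of rooted graphs $(G,x)$ for which $G-B_r(G,x)$ has at least two infinite components. If $G$ has two ends, some finite vertex set separates them, and this set lies in some ball $B_r(G,x)$; then $G-B_r(G,x)$ has two infinite components, so $(G,x)\in\mathcal{M}_r$. Thus the set of graphs with more than one end equals $\bigcup_r \mathcal{M}_r$.

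To show $\mathcal{M}_r$ is nowhere dense, we use the criterion of Remark~\ref{rem:rootedbasics}. Take an $s$-ball $F$ with $\mathfrak{G}^{\bullet,\infty}_F$ (respectively its intersection with $\mathfrak{G}^{\bullet}_{=D}$) non-empty, and assume without loss of generality that $s\ge r+1$, by first extending $F$ to a larger ball realized by some graph in the space. We need a larger ball $F'$ extending $F$, realizable in the space, such that every infinite graph extending $F'$ has $G-B_r$ with exactly one infinite component. The idea is to make $F'$ funnel all infinite continuations through a single vertex. Let $L^{<D}_s(F)$ be the boundary vertices with free degree; the infinite components of $G-B_r$ are unions of pieces hanging off vertices of $L_s$. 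We close off all free half-edges except one. Concretely, attach finite gadgets that absorb all deficit at every vertex of $L^{<D}_s(F)$ except one chosen vertex $u$, where a single new edge goes to a new vertex $w$. In the max-degree case, the gadget can simply saturate nothing: we add no further edges, leaving those vertices with degree $<D$, which is allowed. At $u$ we attach one pendant path of length one to $w$ with $w$ of degree $<D$. The resulting $(s+1)$-ball $F'$ has $L^{<D}_{s+1}(F')=\{w\}$, so by Lemma~\ref{basiclemma} it extends to an infinite graph. In any extension, all vertices beyond radius $s+1$ lie in the component through $w$, and everything else is finite, so $G-B_r$ has exactly one infinite component. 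Hence $\mathfrak{G}^{\bullet,\infty}_{F'}\cap\mathcal{M}_r=\emptyset$.

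The main obstacle is the $D$-regular case, where the gadgets must saturate degrees exactly while keeping the ball structure consistent. For each vertex of $L_s(F)$ other than $u$ with deficit, we attach a finite $D$-regular-completing gadget. One option is to take a copy of a finite graph in which all vertices have degree $D$ except one vertex of degree $D-1$ (possible for odd $D$, e.g.\ $K_{D+1}$ minus a perfect matching plus adjustments), and join that vertex to the deficient vertex. For even $D$, parity forbids this, as noted before Theorem~\ref{cutvertex}. So instead we pair up free half-edges: connect two deficient half-edges through a common finite gadget, namely $K_{D+1}$ with one edge removed, whose two ends attach to two half-edges. A final leftover half-edge goes to $u$'s infinite continuation. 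We route all gadgets and the continuation so that their vertices lie at distance $\ge s+1$, and we must make sure that the new ball $F'$ of large radius $R$ (covering the gadgets) is a genuine ball whose only vertices of degree $<D$ are on the last layer along the continuation from $u$. Then the second part of Lemma~\ref{basiclemma} applies, giving non-emptiness, and the one-infinite-component argument is as before. Since $\mathcal{M}_r$ is nowhere dense for each $r$, the multi-ended graphs form a meager set, and the one-ended graphs form a comeager set.
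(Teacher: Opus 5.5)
Your proposal is correct and follows essentially the paper's route. Your $\mathcal{M}_r$ is the same set as the paper's $X_r$, and you show it is nowhere dense by extending $F$ (of radius $>r$) to a ball $F'$ that funnels all infinite continuations through a single vertex beyond radius $r$; this is exactly the root-separating cut vertex the paper imports from the proof of Theorem~\ref{cutvertex}, except that you rebuild it directly with the same gadgets ($K_{D+1}-e$ pairings and, for even $D$, a funnel vertex receiving two boundary edges) instead of citing the theorem.
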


\begin{restatable}{corollary}{rootedinfinitebridgelessset}
\label{rootedinfinitebridgelessset}
  The set of graphs with a two-way infinite bridge is meager in
  $\mathfrak{G}^{\bullet,\infty}_{\le D}$ and
  $\mathfrak{G}^{\bullet,\infty}_{=D}$ for all $D\ge3$.
\end{restatable}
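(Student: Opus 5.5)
Graphs with a two-way infinite bridge have at least two ends, since deleting the bridge leaves two infinite components. So the class in question lies inside the complement of the set of one-ended graphs. By Corollary~\ref{rootedoneended}, that complement is meager in $\mathfrak{G}^{\bullet,\infty}_{\le D}$ and in $\mathfrak{G}^{\bullet,\infty}_{=D}$ for all $D\ge3$. A subset of a meager set is meager, so the statement follows at once. The only facts to check are that the meager sets of a space form a $\sigma$-ideal, which is immediate from the definition, and that ``one end'' is meant in the usual sense for connected locally finite graphs.

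For completeness, I would also give a direct argument that does not pass through ends. For each $r$, let $S_r$ be the set of graphs $(G,x)$ having a two-way infinite bridge with both endpoints in $B_r(G,x)$. Every graph with such a bridge lies in some $S_r$, so the class is $\bigcup_r S_r$, and it suffices to show each $S_r$ is nowhere dense. I would use the criterion of Remark~\ref{rem:rootedbasics}. Fix a ball $F$ of radius $R\ge r$ (enlarging $F$ if necessary) whose extension set is non-empty in the space. Extend $F$ once more, by one admissible step, to a ball $F'$. Then join all open boundary stubs of $F'$ beyond the last layer into a single connected gadget. This gadget keeps the remaining degrees admissible: for $\mathfrak{G}^{\bullet,\infty}_{\le D}$, each stub is joined to one common infinite ray-like structure. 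For the $D$-regular case, the stubs are attached to a connected infinite $D$-regular remainder, such as a ladder-like piece with suitable parity gadgets, in the way the constructions behind Theorem~\ref{cutvertex} and Lemma~\ref{basiclemma} do. The gadget must make every edge inside the ball lie on a cycle or have one side finite. Its radius-$R''$ ball $F''$ then has every extension outside $S_r$: any edge of $B_r$ that is not on a cycle within $F''$ separates off a part of $F''$ containing no open stubs, and that part stays finite in all extensions.

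The main obstacle in this direct route is making the single-gadget construction work in the $D$-regular space for even $D$. There, parity constraints limit how the boundary stubs may be closed off, and one must ensure that the remainder can still be completed to an infinite graph. For this reason I would rely on the one-line reduction to Corollary~\ref{rootedoneended}, which has already absorbed this difficulty.
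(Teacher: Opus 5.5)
Your main argument is correct, but it takes a different route from the paper.

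\textbf{The paper's route.} The paper does not use ends. It writes the class as a countable union over $k$ of the sets of graphs having a two-way infinite bridge at distance $k$ from the root. It then shows each piece is nowhere dense by reusing the construction in the proof of Theorem~\ref{cutvertex}. Every ball $F$ extends to a ball $F'$ all of whose extensions have a root-separating cut vertex $c$ at level greater than $k$. For a bridge $e$ at level $k$, the component of $G-e$ not containing $c$ lies in the finite component of $G-c$ that contains the root, so $e$ is only one-way infinite.

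\textbf{Your route.} You observe that a two-way infinite bridge $ab$ makes $G-\{a\}$ have at least two infinite components, hence at least two ends. So the class lies in the complement of the comeager set of Corollary~\ref{rootedoneended}.

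\textbf{Comparison.} The proof of Corollary~\ref{rootedoneended} uses the same cut-vertex construction, with the sets $X_r$ in place of the level-$k$ bridge sets. So both arguments run on the same engine.
\begin{itemize}
\item Your version turns the statement into a literal one-line corollary. It also shows the class sits inside a single fixed meager set, with the bridge playing no special role beyond witnessing multi-endedness.
\item The paper's version depends only on Theorem~\ref{cutvertex} and avoids the end structure. It exhibits the nowhere-dense pieces, and hence a Banach--Mazur strategy, explicitly.
\end{itemize}

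\textbf{Your secondary sketch.} The ``direct'' argument you outline is unnecessary, and as written it is underspecified. A basic open set is fixed by a finite ball, so ``ray-like'' or ``ladder-like'' infinite remainders cannot be prescribed. Also, forcing every edge of the ball onto a cycle is more than you need. The even-$D$ parity obstacle you anticipate is exactly what the cut-vertex construction of Theorem~\ref{cutvertex} already handles. That is why the paper's direct argument is only a few lines.
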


For $D$~even, we already argued that $D$-regular graphs have no one-way infinite bridges. 
The theorem above shows that a generic such graph has no bridge at all, thus 
it has a strong nowhere-zero $\mathbb Z_6$-flow by Theorem~25 of~\cite{miraftaba2017algebraic}. 
In fact, we can directly show existence of a strong nowhere-zero $\mathbb Z_2$-flow in such graph (generic $D$-regular 
for even~$D$): a constant $1$ is by definition a weak flow, and by Corollary~\ref{rootedoneended} 
all finite edge-cuts have one side finite; such cuts, however, must be even by the hand-shaking lemma. 

\begin{restatable}{theorem}{edgecoloring}
\label{edgecoloring}
The set of $D$-edge-colorable graphs is nowhere dense in
$\mathfrak{G}^{\bullet,\infty}_{\le D}$ and
$\mathfrak{G}^{\bullet,\infty}_{=D}$ for all $D\ge3$.
\end{restatable}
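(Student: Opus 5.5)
We use the criterion of Remark~\ref{rem:rootedbasics}: given an $r$-ball $F$ with $\mathfrak{G}^{\bullet}_F\cap\mathcal X\neq\emptyset$ (where $\mathcal X$ is $\mathfrak{G}^{\bullet,\infty}_{\le D}$ or $\mathfrak{G}^{\bullet,\infty}_{=D}$), we find a larger ball $F'\supseteq F$ such that $\mathfrak{G}^{\bullet}_{F'}\cap\mathcal X$ is nonempty and no graph in it is $D$-edge-colorable. Since $D$-edge-colorability is inherited by subgraphs, it suffices for $F'$ to contain a fixed finite subgraph $H$ with maximum degree at most $D$ and chromatic index $D+1$. The only requirement is that $H$ can be attached inside the ball without violating the degree constraints of the space.

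\textbf{The gadget.} By Lemma~\ref{basiclemma}, $F$ has a vertex $u\in L_r^{<D}(F)$. Choose a finite graph $H$ with one designated vertex $h$ of degree $D-1$ and all other vertices of degree $D$, such that $H$ is not $D$-edge-colorable. A natural choice is a class-two $D$-regular graph with one edge removed, for example a Petersen-type construction for $D=3$. Simpler is the following parity gadget. Take an odd number $2m+1$ of vertices, with $2m+1\ge D+1$, forming a graph in which every vertex has degree $D$ except $h$, which has degree $D-1$.

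This graph is not $D$-edge-colorable. Each color class is a matching on an odd vertex set, so it misses at least one vertex. Summing over the $D$ colors, the total degree deficit from $D$ would be at least $D$, but here it is only $1$.

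Such graphs exist. We need $(2m+1)D-1$ to be even, which forces $D$ odd. For even $D$, use instead a graph on an odd number of vertices that is $D$-regular except for a single edge removed between two vertices, giving deficit $2<D$. Both the one-vertex deficit and the two-vertex deficit are realizable by standard degree-sequence constructions, which we will spell out.

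\textbf{Attaching the gadget.} Set $F'$ to be $F$ together with a new vertex $w$ adjacent to $u$. When $D$ is odd, $h$ is joined to $w$, so that $h$ reaches degree $D$. When $D$ is even, the two deficient vertices are joined to $w$. We then extend outward from $w$ by a path to infinity, or by $D$-regular trees when we must stay in $\mathfrak{G}^{\bullet,\infty}_{=D}$. All remaining leaf vertices of $F$ are completed in the same way.

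We then take $r'$ large enough that the ball of radius $r'$ contains all of $H$, and let $F'$ be this ball. The set $\mathfrak{G}^{\bullet}_{F'}\cap\mathcal X$ is nonempty because we exhibited a member. Note that $B_r(F')=F$, since all new vertices lie at distance greater than $r$ from the root.

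\textbf{The main obstacle: the regular case.} Here every vertex of $F$ not in $L_r^{<D}(F)$ already has degree $D$, as Lemma~\ref{basiclemma} guarantees. We must check that completing the remaining deficits of the boundary vertices, of $w$, and of the gadget yields a connected infinite $D$-regular graph.

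The plan is to hang infinite $D$-regular trees (each with root degree $D-1$) on every deficient vertex. This works, but we must ensure that $w$ receives the correct number of pendant trees and that the gadget's own deficits are exactly filled. For instance, when $D$ is odd, $w$ has degree $2$ from $u$ and $h$ and then needs $D-2$ trees.

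The even-$D$ parity issue with bridges, noted before Theorem~\ref{cutvertex}, does not arise here. The gadget is joined to the rest of the graph by $1$ or $2$ edges, and a one-edge cut to a finite side occurs only when $D$ is odd, which is consistent with the parity count.
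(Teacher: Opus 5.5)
Your proof is correct and follows the paper's approach. You attach, beyond a vertex of $L_r^{<D}(F)$, a finite gadget of maximum degree at most $D$ that is not $D$-edge-colorable, fill the remaining deficiencies with pendant trees, and use that $D$-edge-colorability passes to subgraphs; for even $D$ your gadget $K_{D+1}-e$ is exactly the paper's $J_D$. For odd $D$ your single-deficient-vertex gadget is the paper's intermediate graph ($K_{D+2}$ minus $xa$, $xb$ and a perfect matching on the other $D-1$ vertices, which also supplies the existence you left to be spelled out), and using it directly is simpler than the paper's $J_D$ (two copies joined by a bridge, minus a non-bridge edge, with a two-case argument); just drop your aside about a class-two $D$-regular graph minus an edge, which has two deficient vertices rather than one.
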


Together, Theorems~\ref{rootedbridgelessset}--\ref{edgecoloring} show that,
in the rooted setting, bridgelessness, the absence of cut vertices, the
presence of a two-way infinite bridge, and being $D$-edge-colorable, 
are all atypical. {The exception is the bridgelessness conclusion
in \(\mathfrak{G}_{=D}^{\bullet,\infty}\) when \(D\) is even, since an
even-regular graph cannot contain a one-way infinite bridge.}

\section{From rooted to unrooted graphs}
\label{sec:unrooted}

In the previous section we studied typicality in the space
$(\mathfrak{G}^{\bullet,\infty}_{\le D}, d^{\bullet})$ of infinite connected
rooted graphs. Let us illustrate the difference with the unrooted case with a simple example. 
Consider a  two-way infinite path $P^\bullet_\infty$ (with any root). 
Let $G^\bullet_n$ denote $P^\bullet_\infty$ with a leaf attached to a vertex at distance~$n$ from the root. 
The rooted distance of these graphs is 
$d^{\bullet}(P^\bullet_\infty, G^\bullet_n) = 2^{-n}$, so $P^\bullet_\infty$ is a limit point of graphs~$G^\bullet_n$. 
However, in many senses every graph~$G^\bullet_n$ is different from~$P^\bullet_\infty$ (e.g., existence of a one-way infinite bridge, 
thus also existence of a weak nowhere-zero flow). 
This leads us to the natural question: what happens if we drop the root? Can
we say something about the typicality of unrooted graphs? In this section, we
explore why it is, unfortunately, more complicated. 
A connected graph $G$ determines the set of all its
rootings $\{(G,v) : v \in V(G)\} \subseteq \mathfrak{G}^{\bullet}_{\le D}$,
and comparing these sets in the Hausdorff metric induced by $d^{\bullet}$
yields a natural pseudometric on the space of connected graphs.

\begin{definition}[Spaces of graphs and digraphs]
Let $\mathfrak{G}$ and $\vec{\mathfrak{G}}$ denote the spaces of connected
graphs and connected digraphs, respectively, considered up to isomorphism.
For $D \ge 3$, let $\mathfrak{G}_{\le D}$ denote the subspace of graphs with
maximum degree at most $D$, and let $\mathfrak{G}_{=D}$ denote the subspace
of graphs in which every vertex has degree exactly $D$. The superscript
$\infty$ indicates the corresponding subspace of infinite graphs. The
analogous notation for digraphs is obtained by replacing $\mathfrak{G}$ with
$\vec{\mathfrak{G}}$, where degree means total degree.
\end{definition}

Let $G$ be a graph with vertex set $V(G)$. For $r \in \mathbb{N}$, we define
the set of rooted $r$-balls of $G$ by
\[
\mathfrak{B}_r(G)
:=
\{\, B_r(G,v) \mid v \in V(G) \,\},
\]
where each ball is considered up to root-preserving isomorphism. We further
define
\[
\mathfrak{B}(G)
:=
\bigcup_{r \ge 0} \mathfrak{B}_r(G).
\]

\begin{definition}[The pseudometric \(d\) on \(\mathfrak{G}_{\le D}\)]
For \(G_1,G_2\in \mathfrak{G}_{\le D}\), define
\[
d(G_1,G_2)
:=
\inf\bigl\{2^{-r}\mid
\mathfrak{B}_r(G_1)\cong \mathfrak{B}_r(G_2)
\bigr\},
\]
where \(\mathfrak{B}_r(G_1)\cong \mathfrak{B}_r(G_2)\) means that there
exists a bijection between the two collections such that corresponding
rooted graphs are isomorphic via root-preserving isomorphisms.
\end{definition}

Equivalently, \(d\) is the Hausdorff pseudodistance, with respect to the rooted metric \(d^{\bullet}\), between the sets of rootings of the two graphs. More
explicitly, for all \(G_1,G_2\in\mathfrak{G}_{\le D}\),
\[
\begin{aligned}
d(G_1,G_2)
&=
d_H^{\bullet}\bigl(
\{(G_1,v):v\in V(G_1)\},
\{(G_2,w):w\in V(G_2)\}
\bigr) \\
&=
\max\Bigl\{
\sup_{v\in V(G_1)}\inf_{w\in V(G_2)}
d^{\bullet}\bigl((G_1,v),(G_2,w)\bigr), \\
&\hspace{1.3cm}
\sup_{w\in V(G_2)}\inf_{v\in V(G_1)}
d^{\bullet}\bigl((G_1,v),(G_2,w)\bigr)
\Bigr\}.
\end{aligned}
\]
Here rooted graphs are considered up to root-preserving isomorphism, so
multiplicities of isomorphic rootings are ignored.

If we restrict attention to finite graphs, then \(d\) is a metric. In
general, however, \((\mathfrak{G}_{\le D},d)\) is only a pseudometric space:
distinct non-isomorphic graphs may have distance zero; see
Theorem~\ref{nonisomorphictrees}.

\begin{remark}
The induced notion of convergence is the \emph{naive convergence} introduced by
Elek~\cite{elek2018qualitative}. Whereas Benjamini--Schramm convergence
measures the limiting frequencies of finite rooted neighborhoods, naive
convergence records only which finite rooted neighborhoods occur.
Elek considers countable bounded-degree graphs, not necessarily connected,
and identifies two graphs if they have the same collection of rooted balls.
Thus, he works with the metric quotient of the corresponding pseudometric
space. In contrast, we restrict to connected graphs and consider them up to
isomorphism without identifying graphs at pseudodistance zero. 
\end{remark}

\begin{definition}[Extensions of $\mathfrak{F}$]
Let $\mathfrak{F}$ be a collection of rooted $r$-balls of maximum degree at
most $D$, each considered up to root-preserving isomorphism. We define the
set of \emph{extensions} of $\mathfrak{F}$ by
\[
\mathfrak{G}_{\mathfrak{F}}
:=
\left\{
G' \in \mathfrak{G}_{\le D}
\;\middle|\;
\mathfrak{B}_r(G') \cong \mathfrak{F}
\right\}.
\]
\end{definition}

For each $G \in \mathfrak{G}_{\le D}$ and each $r \in \mathbb{N}$, we
have
\[
\mathcal{U}_{2^{-(r-1)}}(G)
= \overline{\mathcal{U}}_{2^{-r}}(G)
= \mathfrak{G}_{\mathfrak{B}_r(G)}.
\]
In particular, the sets $\mathfrak{G}_{\mathfrak{F}}$ are both closed and
open, and they form an open basis of $(\mathfrak{G}_{\le D}, d)$.

Let $\mathcal{X} \subseteq \mathfrak{G}_{\le D}$ be a subspace. A
subset $S \subseteq \mathcal{X}$ is nowhere dense in $\mathcal{X}$ if and
only if for every collection $\mathfrak{F}$ of rooted $r$-balls with
$(\mathfrak{G}_{\mathfrak{F}} \cap \mathcal{X}) \neq \emptyset$, there
exists a collection $\mathfrak{F}'$ of rooted $r'$-balls such that
\[
\emptyset
\neq (\mathfrak{G}_{\mathfrak{F}'} \cap \mathcal{X})
\subseteq (\mathfrak{G}_{\mathfrak{F}} \cap \mathcal{X}) \setminus S.
\]

\begin{definition}[Weakly isolated points]
Let $(\mathfrak{X},d)$ be a pseudometric space. A point
$x \in \mathfrak{X}$ is called \emph{weakly isolated} if there is no point
$x' \in \mathfrak{X}$ with $x' \not= x$ such that $d(x,x') = 0$.
\end{definition}

\begin{definition}[Strongly isolated points]
Let $(\mathfrak{X},d)$ be a pseudometric space. A point
$x \in \mathfrak{X}$ is called \emph{strongly isolated} if there is
$\varepsilon > 0$ such that no point $x' \in \mathfrak{X}$ with
$x' \not= x$ satisfies $d(x,x') \le \varepsilon$.
\end{definition}

\begin{remark}
When we speak of isolated graphs in \((\mathfrak{G}_{\le D},d)\), the
condition \(x' \neq x\) is understood up to isomorphism; that is, we
require \(G' \not\cong G\). Note that a graph
$G \in \mathfrak{G}_{\le D}$ is strongly isolated if and only if
$\mathfrak{G}_{\mathfrak{B}_r(G)} = \{G\}$ for some $r \in \mathbb{N}$; in
that case $\{G\}$ is open. 
\end{remark}

\begin{remark}
Strong isolation implies weak isolation, but the converse fails. Indeed,
the \(D\)-regular tree \(T_D\) is weakly isolated, since it is the unique
connected \(D\)-regular graph containing no cycle. However, it is not
strongly isolated, since there exists a sequence of finite \(D\)-regular
graphs whose girths tend to infinity and which converges to \(T_D\).
\end{remark}

\begin{remark}
Since \((\mathfrak{G}^{\bullet}_{\le D},d^{\bullet})\) is a metric space,
every rooted graph in \(\mathfrak{G}^{\bullet}_{\le D}\) is weakly
isolated.
\end{remark}

Theorem~\ref{nonisomorphictrees} shows that \(\mathfrak{G}_{\le D}\)
contains uncountably many infinite graphs that are not weakly isolated.
Moreover, Theorem~\ref{regularnonisomorphictrees} shows that
\(\mathfrak{G}_{=D}\) contains uncountably many infinite graphs that are
not weakly isolated.

\begin{restatable}{theorem}{nonisomorphictrees}
\label{nonisomorphictrees}
For every $D \ge 3$, there exist $2^{\aleph_0}$ pairwise non-isomorphic
infinite trees in $\mathfrak{G}_{\le D}^\infty$ with the same collection of
rooted balls $\mathfrak{B}$.
\end{restatable}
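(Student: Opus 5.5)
We construct, for each infinite $0/1$-sequence $s=(s_1,s_2,\dots)$, a tree $T_s$ of maximum degree at most $3$ (hence in $\mathfrak{G}^\infty_{\le D}$ for every $D\ge3$). The idea is to make every finite pattern appear in every $T_s$, while the sequence $s$ itself remains recoverable from $T_s$. Concretely, fix an enumeration $w_1,w_2,\dots$ of all finite $0/1$-words, and let $u_s$ be the one-way infinite word
\[
u_s = w_1\, s_1\, w_2\, s_2\, w_3\, s_3 \cdots .
\]
Take a one-way infinite spine $x_0x_1x_2\cdots$. For each $i\ge1$, attach a pendant path of length $1$ to $x_i$ if the $i$-th letter of $u_s$ is $0$, and a pendant path of length $2$ if it is $1$. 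To mark the start, attach a long gadget at $x_0$, namely a pendant path of length $3$. All degrees are then at most $3$, and $T_s$ is an infinite tree.

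\textbf{Step 1: equal ball collections.} We show that $\mathfrak{B}_r(T_s)$ is independent of $s$. A rooted $r$-ball of $T_s$ centred near spine position $i$ is determined by the following data: whether the ball reaches $x_0$ (and, if so, the offset), together with the window of letters of $u_s$ at positions within about $r$ of $i$.

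Balls that see $x_0$ lie within distance $r$ of it. They are therefore determined by the first $r+1$ or so letters of $u_s$, which come from $w_1\cdots$ and do not depend on $s$ provided $w_1$ is chosen long enough. This is not quite automatic; to make it robust, we instead prefix $u_s$ with a fixed infinite-enough initial block, or equivalently we note that every finite word occurs as some $w_j$.

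Balls far from $x_0$ see a finite window $\omega$ of $u_s$. Every finite word occurs as some $w_j$, so every window $\omega$ occurs in every $T_s$. Conversely, every window occurring in $T_s$ is some finite word, and hence occurs inside some $w_{j'}$ in every $T_{s'}$, far from the origin. Thus the collection of far-from-origin balls is the set of all windows of the appropriate lengths in every $T_s$.

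The near-origin balls require care: we must make sure the prefix of $u_s$ of any fixed length is eventually independent of $s$. This fails, because $s_1$ sits at a fixed position. The fix is to interleave the bits of $s$ at positions tending to infinity with rapidly growing gaps filled by fixed words, e.g.
\[
u_s = w_1\, w_2 \cdots w_{n_1}\, s_1\, w_{n_1+1}\cdots w_{n_2}\, s_2 \cdots .
\]
Even so, the prefix of length $L$ depends on $s_1$ once $L$ exceeds the position of $s_1$. The cleaner remedy is to remove the origin entirely: we use a two-way infinite spine, where the only position-dependent information would be the origin, and there is none. Then $\mathfrak{B}_r(T_s)$ is exactly the set of balls coming from windows, which is the set of all windows, independent of $s$.

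\textbf{Step 2: many isomorphism classes.} With a two-way spine and bi-infinite word $u_s$, $T_s\cong T_{s'}$ if and only if $u_{s'}$ is a shift or reversal of $u_s$. This is because the spine is the unique bi-infinite path in $T_s$: the pendant paths are finite, so any isomorphism maps spine to spine. Each isomorphism class therefore contains only countably many $s$. Since there are $2^{\aleph_0}$ sequences $s$, there are $2^{\aleph_0}$ pairwise non-isomorphic classes, and choosing one representative from each gives the theorem.

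\textbf{Main obstacle: encoding.} The hard step is choosing the encoding so that the map from $s$ to $u_s$ up to shift and reversal is countable-to-one. Concretely, we place on the negative side a fixed sequence containing all finite words, and on the positive side blocks $w_1 s_1 w_2 s_2\cdots$. Tail-equivalence of the positive side is preserved under shifts; reversal simply swaps the two sides. If $u_s$ equals a shift of $u_{s'}$, then $s$ and $s'$ agree up to a shift in the tails. For each fixed shift $k$ there is only one $s'$ with tail determined by $s$, and only countably many choices for the initial bits of $s'$. So each class contains countably many $s$, which gives the countable-to-one property.
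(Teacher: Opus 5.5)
Your final construction is correct. It uses a two-way spine with a pendant path of length $1$ or $2$ at every vertex, read from a bi-infinite word whose positive half is $w_1s_1w_2s_2\cdots$. This is a genuinely different route from the paper's.

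The paper also starts from a two-way infinite path, but it attaches different data:
\begin{itemize}
\item It attaches, through a farthest leaf, the $i$-th finite rooted tree $T_i$ of maximum degree at most $D$ at the position of the $i$-th $1$ of $\omega$, and only to the right of a labelled vertex.
\item Equality of ball collections comes from universality: every possible ball is realized inside some attached $T_j$.
\item Rigidity comes from the copy of $T_1$ marking vertex $1$, with the attachments on its right fixing the direction. So $\omega$ is read off exactly, and $\omega\mapsto T_\omega$ is injective on isomorphism classes.
\end{itemize}

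You instead encode into caterpillars. Equality of ball collections reduces to the fact that every finite word is a window of every $u_s$. Isomorphism of $T_s$ and $T_{s'}$ is exactly equality of $u_s$ and $u_{s'}$ up to shift and reversal. So you only get a countable-to-one map and finish with a cardinality count. Your version is more elementary: there is no enumeration of rooted trees, and no need to check that the hanging tree at the marked vertex cannot be confused with other hanging trees. The paper's version gives an explicit injective parametrization with no quotient by shifts, and its trees realize every ball of a bounded-degree tree.

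When you write it up, discard the false starts with the one-way spine and the length-$3$ marker. In your last paragraph, the tail-equivalence reasoning is unnecessary, and as stated it presumes that the shift carries the slots holding $s$ onto those holding $s'$. It suffices that $s$ is read off from fixed positions of $u_s$. Then each of the countably many shifts and reversals of $u_s$ equals $u_{s'}$ for at most one $s'$. In particular the ``main obstacle'' is not one: any fixed negative half works, since the blocks $w_j$ already supply every finite word.
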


After finishing the proof of Theorem~\ref{nonisomorphictrees} we found that 
Elek~\cite{elek2018qualitative} gives a similar construction in his Proposition 6.2 (with a different proof), 
valid for $D > 3$. We give an explicit construction that also covers the case $D=3$. 
Note that it is impossible to extend this result to the $D$-regular graphs: there is only one countably infinite
$D$-regular tree (up to isomorphism). However, if we do not restrict ourselves to trees, we get the following:

\begin{restatable}{theorem}{regularnonisomorphic}
\label{regularnonisomorphictrees}
For every $D \ge 3$, there exist $2^{\aleph_0}$ pairwise non-isomorphic
graphs in $\mathfrak{G}_{=D}^\infty$ with the same collection of rooted
balls $\mathfrak{B}$.
\end{restatable}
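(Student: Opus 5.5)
The plan is to avoid trees altogether: encode an arbitrary bi-infinite $0/1$ sequence into a $D$-regular "chain of gadgets", and choose the sequences so rich that every finite pattern occurs in each of them. Then all the graphs have the same collection of rooted balls, while the sequence can still be read back from the graph.

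\emph{Gadgets.} I will use two finite connected $2$-edge-connected graphs $H^0$ and $H^1$. In each, all vertices have degree $D$ except two designated vertices $a,b$ of degree $D-1$.
\begin{itemize}
\item Take $H^0=K_{D+1}-ab$.
\item Take $H^1=K_{D,D}-ab$, where $ab$ is an edge of $K_{D,D}$.
\end{itemize}
Both are $2$-edge-connected for $D\ge 3$, since deleting one edge from $K_{D+1}$ or $K_{D,D}$ leaves a $2$-edge-connected graph. They are non-isomorphic because their orders are $D+1\neq 2D$.

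\emph{Chain graphs.} For $s\in\{0,1\}^{\mathbb Z}$, let $G_s$ consist of disjoint copies $H_i\cong H^{s_i}$ for $i\in\mathbb Z$, plus the edges $b_ia_{i+1}$. Each port gains exactly one edge, so $G_s$ is connected and $D$-regular. It is infinite, and its $2$-edge-connected components are exactly the blocks $H_i$; the connecting edges are (two-way infinite) bridges.

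\emph{Recovering $s$.} Contracting each $2$-edge-connected component of $G_s$ gives a two-way infinite path. Labelling each contracted vertex by the order of its block gives back $s$, up to shift and reversal. Hence $G_s\cong G_t$ implies that $t$ is a shift of $s$ or of its reversal. So every isomorphism class contains only countably many of the $G_s$.

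\emph{Equal ball collections.} Call $s$ \emph{rich} if every finite binary word occurs in it as a contiguous subword. Fix $r$ and a root $v$ in block $H_i$. Every gadget has diameter at most $2$, and every path to another block passes through the bridges. So $B_r(G_s,v)$ is determined, up to root-preserving isomorphism, by three things:
\begin{itemize}
\item the finite window $s_{i-r},\dots,s_{i+r}$;
\item which vertex of $H^{s_i}$ the root is;
\item the orientation of the chain.
\end{itemize}
If $s$ and $t$ are both rich, every window of $s$ occurs in $t$ in the same orientation. Hence $\mathfrak{B}_r(G_s)=\mathfrak{B}_r(G_t)$ for all $r$, and so $\mathfrak{B}(G_s)=\mathfrak{B}(G_t)$.

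\emph{Counting rich sequences.} There are $2^{\aleph_0}$ rich sequences. To see this, fix a one-sided sequence $u$ containing all finite words, place $u$ on the nonnegative indices, and choose the negative indices arbitrarily. Dividing by the countable isomorphism classes still leaves $2^{\aleph_0}$ pairwise non-isomorphic graphs.

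The main point to check carefully is the recovery step: that the $2$-edge-connected components of $G_s$ are exactly the gadgets. This follows because each $H_i$ is $2$-edge-connected and each connecting edge is a bridge. The dependence of balls on finite windows only is routine, given the bounded gadget size.
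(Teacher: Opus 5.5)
Your proof is correct, and it takes a genuinely different route from the paper.

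\textbf{The paper's route.} The paper reuses the ladder construction from the proof of Theorem~\ref{bridge}. Its graph $H_2^D$ has as ball collection the universal family $\mathcal{A}^{(=D)}$ of all $r$-balls $F$ of infinite $D$-regular graphs with $F^{+}$ bridgeless. For each infinite set $K$ of indices, the paper additionally contracts the layer pairs $\{-(3k+1),-(3k+2)\}$, $k\in K$, which produces further two-way infinite bridges. Each variant still has no one-way infinite bridge and still contains every enumerated ball, so its ball collection is again $\mathcal{A}^{(=D)}$. Non-isomorphism comes from recognizing the original bridge as the unique bridge one of whose sides is bridgeless (this is why $K$ must be infinite). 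One then reads $K$ off the positions of the remaining bridges, so the map from $K$ to the graph is injective.

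\textbf{Your route.} You encode a bi-infinite word in a chain of two non-isomorphic $2$-edge-connected gadgets joined by bridges. You note that an $r$-ball sees only a window of $2r+1$ blocks. You then force all ball collections to coincide by using words that contain every finite word. Decoding goes through the canonical decomposition into $2$-edge-connected components and yields only countable fibers, which your cardinality count handles.

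\textbf{Comparison.} Your argument is more elementary and self-contained. It avoids enumerating $\mathcal{A}^{(=D)}$ and checking that attaching those balls creates no unwanted bridges. It is essentially the regular analogue of the sequence-encoding idea behind Theorem~\ref{nonisomorphictrees}. The paper's argument gives a stronger by-product: its $2^{\aleph_0}$ graphs share the maximal collection $\mathcal{A}^{(=D)}$, so they also lie at distance $0$ from the bridgeless graph $H_1^D$. Your common collection, by contrast, consists only of chain-like balls.

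One harmless slip: $K_{D,D}-ab$ has diameter $3$, not $2$, because the endpoints of the deleted edge lie in different parts and are no longer adjacent. Your window argument never needs the diameter bound, only that passing from one block to the next costs at least one edge.
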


Theorem~\ref{bridge} gives a further distinction: there exist an infinite regular graph with a two-way infinite bridge and an infinite regular bridgeless graph
whose distance is zero. Consequently, the set of graphs with a two-way infinite bridge is not Borel  in the unrooted naive topology.

\begin{restatable}{theorem}{bridge}
\label{bridge}
Determining whether a graph is bridgeless cannot be done by examining the
collection of rooted balls $\mathfrak{B}$ alone. More precisely, for every
$D \ge 3$, there exist $H_1^D, H_2^D \in \mathfrak{G}_{= D}^\infty$ such that
$d(H_1^D, H_2^D) = 0$, where one of the graphs contains a two-way infinite
bridge, while the other contains no bridge.
\end{restatable}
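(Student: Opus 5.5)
The idea is to build two $D$-regular graphs from the same family of finite gadgets, glued in two different global patterns. Both patterns will realize exactly the same finite rooted neighbourhoods, but one pattern forces a two-way infinite bridge and the other has none.

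\emph{The gadget.} Let $K$ be a finite $2$-edge-connected graph with exactly two vertices of degree $D-1$ (the ``ports'') and all other vertices of degree $D$. For instance, take $K_{D+1}$, delete one edge $ab$, and let $a,b$ be the ports. We then attach edges at the ports to make the whole graph $D$-regular.

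\emph{The bridged graph $H_1^D$.} Take a two-way infinite sequence of copies $K_i$, $i\in\mathbb Z$. Join the port $b_i$ to $a_{i+1}$ by an edge $e_i$. The result is $D$-regular, and every $e_i$ is a two-way infinite bridge.

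\emph{The bridgeless graph $H_2^D$.} This graph must contain, for every $r$, all the $r$-balls of $H_1^D$, and no others. In $H_1^D$ every $r$-ball is a finite piece of the chain of gadgets. The natural candidate is therefore a graph that looks locally like longer and longer chain segments, while every connecting edge still lies on a cycle. To achieve this:
\begin{itemize}
\item for each $n$, form a ``long cycle'' $C_n$ by closing a chain of $n$ gadgets into a ring;
\item in a ring, every edge $e_i$ lies on a cycle, so no edge is a bridge;
\item connect the rings $C_n$ into one connected $D$-regular graph without creating new local patterns, by cutting one $e_i$ edge in each of two consecutive rings and cross-joining them.
\end{itemize}
Each cross-join replaces two edges with the same endpoints-types, so every vertex keeps the same local picture. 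Cross-joining two rings produces a single cycle of gadgets, so bridgelessness survives: any edge $e$ of the connector type still lies on a cycle.

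\emph{A cleaner variant.} It may be better to define $H_2^D$ as the ``ladder of rings'': take the rings $C_1,C_2,\dots$ (or $C_{2^n}$ to avoid short wraparounds), and merge each consecutive pair by such a 2-switch. Checking that this stays bridgeless then becomes a finite argument per switch.

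\emph{Matching the balls.} There are two inclusions to check.
\begin{itemize}
\item Every ball of $H_1^D$ appears in $H_2^D$: a ball of radius $r$ in $H_1^D$ meets only $O(r)$ consecutive gadgets, and it appears inside a long ring whose length far exceeds $r$.
\item Every ball of $H_2^D$ appears in $H_1^D$: this needs the radius-$r$ neighbourhood of each vertex of $H_2^D$ to look like a segment of the chain. Short rings such as $C_1$ or $C_2$ do not; a gadget glued to itself is not a chain segment. So we cannot use all rings. Instead we take only a long chain-like structure, so that every vertex sees a chain-segment locally.
\end{itemize}

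\emph{A safer construction of $H_2^D$.} A more robust choice is the following. Take the two-way infinite chain of $H_1^D$, and for a sparse increasing sequence of positions add a ``return'' structure. Concretely, repeatedly merge the chain with disjoint very long rings via 2-switches placed far apart, with spacing tending to infinity. Every switch leaves all local neighbourhoods looking like chain segments, because each switched pair is of the form $e_i,e_j$ with identical surroundings. After the switches, each former bridge lies in a cycle that passes through some ring. The graph must remain connected, and the spacing of the switches must grow so that no new ball appears.

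\emph{Main obstacle.} The hardest step is making $H_2^D$ fully bridgeless, with every $e_i$ covered by a cycle, while creating no new $r$-ball for any $r$. Short cycles of gadgets would produce forbidden balls, since they repeat the same gadget within radius $r$. So every cycle through a connector edge must have length tending to infinity, yet every connector edge must lie on some cycle. I would use the chain with nested, increasingly long ``bypass'' rings, and verify bridgelessness by exhibiting an explicit cycle through each connector. The equality $d(H_1^D,H_2^D)=0$ then follows from the definition of $d$, since $\mathfrak B_r(H_1^D)=\mathfrak B_r(H_2^D)$ for every $r$.
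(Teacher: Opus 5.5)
\textbf{There is a genuine gap, and it cannot be closed with your choice of bridged graph.} The ``main obstacle'' you name is impossible for the chain of copies of $K_{D+1}-ab$. In that chain, the connectors $e_i=b_ia_{i+1}$ are exactly the edges lying in no triangle: for $D\ge3$ every edge of $K_{D+1}-ab$ lies in a triangle, while $b_i$ and $a_{i+1}$ have no common neighbour. This property is visible in $1$-balls.

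Hence any graph $X$ with $\mathfrak{B}(X)=\mathfrak{B}(H_1^D)$ has triangle-free edges, and every one of them is a bridge of $X$. Indeed, if one lay on a cycle $Z$, the ball of radius $|Z|+1$ around a vertex of $Z$ would contain $Z$ together with all common neighbours of its vertices. Its isomorphic copy in $H_1^D$ would then put some $e_i$ on a cycle, which is impossible.

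Your requirement that cycles through connectors have ``length tending to infinity'' is a symptom of this, but it defeats itself. Each individual cycle is finite and lies in a finite ball, so no cycle through a connector is allowed at all. Your concrete steps fail in the same way:
\begin{itemize}
\item a $2$-switch between the chain and a ring only splices the ring's gadgets into the chain, giving again a chain isomorphic to $H_1^D$ in which every connector is a bridge;
\item a $2$-switch between two rings gives a larger finite ring, whose large balls see the wrap-around and are not balls of $H_1^D$.
\end{itemize}
So no bridgeless graph lies at distance $0$ from your $H_1^D$.

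\textbf{The missing idea.} The bridged graph needs a much richer ball collection, so that every ball around its bridge also occurs in a bridgeless graph where the corresponding edge lies on a cycle leaving the ball. Two-way infiniteness is what makes this possible. In a ball $F$ around a two-way infinite bridge, both sides of the bridge reach the boundary, so the bridge is not a bridge of $F^{+}$ (a new vertex joined to the deficient boundary vertices).

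The paper turns this into a sandwich argument:
\begin{itemize}
\item Let $\mathcal{A}^{(=D)}$ be the set of rooted balls $F$ of infinite $D$-regular graphs with $F^{+}$ bridgeless. It contains every ball of every such graph with no one-way infinite bridge.
\item For the bridgeless graph, the spine is the ladder $P_{\mathbb{Z}}\square K_{D-1}$ with infinitely many edges of one copy of $P_{\mathbb{Z}}$ subdivided. For the other graph, the spine is the same ladder with two consecutive layers contracted to single vertices, which creates a two-way infinite bridge.
\item Every member of $\mathcal{A}^{(=D)}$ is attached along its deficient boundary vertices to the degree-$2$ subdivision vertices. Bridgelessness of each $F^{+}$ guarantees that no bridge is created.
\end{itemize}
Both resulting graphs are $D$-regular and contain every member of $\mathcal{A}^{(=D)}$ as a ball. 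Having no one-way infinite bridge, they have no other balls. So their collections $\mathfrak{B}$ coincide without the two spines ever being compared directly.
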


This is also a distinction between weak and strong flow theory:
the bridgeless graph in Theorem~\ref{bridge} admits a strong nowhere-zero
$6$-flow (Theorem~25 of \cite{miraftaba2017algebraic}), 
whereas the graph with a bridge does not. 
Thus the existence of a strong nowhere-zero $6$-flow is not
determined by $\mathfrak{B}(G)$. In contrast, the existence 
of one-way infinite bridges (and thus, by Theorem~\ref{infbridgeless}, the existence of a weak nowhere-zero $6$-flow)
is determined by $\mathfrak{B}(G)$.

Strongly isolated graphs play an important role in studying typicality due
to the following simple lemma.

\begin{restatable}{lemma}{isolateisnotmeager}
\label{lem:isolateisnotmeager}
A set that includes a strongly isolated point cannot be meager.
\end{restatable}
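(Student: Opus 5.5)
The plan is to show that a strongly isolated point cannot lie in any nowhere dense set, and then pass to countable unions. Let $x$ be strongly isolated in the ambient (pseudo)metric space $\mathcal X$. By definition there is $\varepsilon>0$ such that no $x'\neq x$ has $d(x,x')\le\varepsilon$. In our graph spaces this means $\mathfrak{G}_{\mathfrak{B}_r(G)}\cap\mathcal X=\{G\}$ for some $r$, by the remark following the definition of strong isolation. Hence the neighbourhood $\mathcal U_\varepsilon(x)$, taken up to isomorphism, is exactly $\{x\}$. So $\{x\}$ is a non-empty open set.

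Next, suppose $S$ is nowhere dense. Apply the definition to the non-empty open set $U=\{x\}$. This gives a non-empty open $U'\subseteq\{x\}$ with $U'\cap S=\emptyset$. The only non-empty subset of $\{x\}$ is $\{x\}$ itself, so $U'=\{x\}$ and therefore $x\notin S$. In words, no nowhere dense set contains a strongly isolated point.

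Finally, let $M$ be meager and write $M=\bigcup_{n} S_n$ with each $S_n$ nowhere dense. If $x\in M$, then $x\in S_n$ for some $n$, which contradicts the previous step. So any set containing $x$ is not meager.

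The argument is routine. The only point needing care is the pseudometric setting, where "points" are isomorphism classes. There one must check that $\mathcal U_\varepsilon(x)=\{x\}$ really holds up to isomorphism, so that $\{x\}$ is open. Strong isolation guarantees this, whereas weak isolation would not.
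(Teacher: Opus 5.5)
Your proof is correct and follows essentially the same route as the paper: strong isolation makes $\{x\}$ a non-empty open set, whose only non-empty open subset is $\{x\}$ itself, so $x$ lies in no nowhere dense set and hence in no countable union of them. The paper packages this as a single contradiction argument, but the content is identical.
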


\begin{remark}
Finite rooted graphs, as well as finite unrooted graphs, are strongly
isolated: each is uniquely determined by neighborhoods of sufficiently
large radius. Hence, a class containing a finite (rooted) graph cannot be
meager. For this reason, our study of typicality is restricted to infinite
rooted graphs in Section~\ref{sec:metric} and to infinite graphs in
Section~\ref{sec:unrooted}.
\end{remark}

The following result shows that there are infinitely many strongly isolated
infinite graphs with bounded degrees.

\begin{restatable}{theorem}{stronglyisolated}
\label{stronglyisolated}
For $D\ge3$, there are countably infinitely many infinite graphs that are
strongly isolated in $\mathfrak{G}_{\le D}$, and hence also in
$\mathfrak{G}_{\le D}^{\infty}$.
\end{restatable}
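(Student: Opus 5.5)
We construct, for each $n\ge 1$, an explicit infinite graph $G_n\in\mathfrak{G}^\infty_{\le D}$ and show two things: $\mathfrak{G}_{\mathfrak{B}_r(G_n)}=\{G_n\}$ for a suitable radius $r=r(n)$, and the graphs $G_n$ are pairwise non-isomorphic. By the remark characterising strong isolation this gives the statement for $\mathfrak{G}_{\le D}$. It then passes to $\mathfrak{G}^\infty_{\le D}$ automatically, since a smaller space has fewer competitors. The upper bound ``countably'' is automatic as well: a strongly isolated graph is determined by a finite collection $\mathfrak{B}_r(G)$ of finite balls, and there are only countably many such collections.

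\textbf{Construction.} Let $G_n$ be the one-way infinite ray $v_0v_1v_2\dots$ with a marker at the start: attach a cycle $C_n$ of length $n+3$ through $v_0$ (or, to stay in max degree $3$, pendant leaves at $v_0$). This uses degrees at most $3\le D$, and the marker length makes the $G_n$ pairwise non-isomorphic. Take $r=n+4$, larger than the diameter of the marker.

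\textbf{Step 1: read off the local structure.} Suppose $\mathfrak{B}_r(H)=\mathfrak{B}_r(G_n)$. Every $r$-ball of $H$ is then an $r$-ball of $G_n$, and each such ball is either centred near the marker or is a path of length $2r$ centred at a degree-$2$ vertex. So every vertex of $H$ has degree $\le 3$, and the degree pattern together with the cycle data visible within radius $r$ constrains $H$ locally.

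\textbf{Step 2: find the marker and follow the tail.} The collection contains the ball centred at $v_0$, so $H$ contains a copy of the marker. The ball of radius $r$ around any vertex of the marker in $H$ is isomorphic to the corresponding ball in $G_n$. Because $r$ exceeds the marker's diameter, the marker closes up inside $H$ exactly as in $G_n$, and a single ray leaves it.

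Moving along that ray, each vertex has a ball that must appear in $\mathfrak{B}_r(G_n)$. A ball containing no marker vertex is a bare path, so the continuation has degree $2$ and cannot close into a cycle; any cycle would lie entirely in a ball or would create a degree-$3$ vertex. By induction the tail is an infinite ray.

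\textbf{Step 3: rule out a second marker.} Since $H$ is connected, $H$ is the marker plus the ray unless a second marker exists somewhere. A second marker would have to sit at the far end of the ray, which would make $H$ finite or make some vertex have a ball absent from $\mathfrak{B}_r(G_n)$. An infinite ray has no far end. Hence $H\cong G_n$.

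\textbf{Main obstacle.} The hardest step is excluding a second, distant marker, or more generally global structure invisible locally. Here we rely on the ray having a unique direction: from the marker, the path is forced step by step, and the forced structure already exhausts the connected graph $H$. If degree $2$ vertices on a ray turn out to cause trouble when the marker closes into a cycle (for instance a cycle of length $>2r$ could be mistaken for a ray), we take the marker to be a short cycle of length $<r$ or a rigid finite gadget, which keeps every cycle fully visible in some ball.
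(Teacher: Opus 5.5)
\paragraph{Gap in Step~3.} The step fails for the space in the statement. $\mathfrak{G}_{\le D}$ contains finite graphs, so ``would make $H$ finite'' is not a contradiction.

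Concretely, let $H_L$ consist of two copies of your marker (cycles of length $n+3$) whose degree-$3$ vertices are joined by a path of length $L$. Take $L\ge 2r+2$. Then no $r$-ball of $H_L$ meets both markers. Every $r$-ball of $H_L$ is isomorphic to an $r$-ball of $G_n$, since balls near the second marker are mirror images of balls near the first. Conversely, the vertex at distance $r+1$ from a marker sees a bare path of length $2r$. Hence $\mathfrak{B}_r(H_L)=\mathfrak{B}_r(G_n)$. This holds for every $r$, so $G_n$ is a limit of finite graphs and is not strongly isolated in $\mathfrak{G}_{\le D}$.

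Changing the marker (a short cycle, a rigid gadget) cannot fix this. Far from the marker, $G_n$ is locally a bare path, which carries no information about which side the marker lies on. So the marker can always be mirrored at the far end of a long finite path.

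\paragraph{What your argument does prove, and what is missing.} Your argument works in $\mathfrak{G}^\infty_{\le D}$, where the finite alternative is excluded by hypothesis rather than by contradiction. An infinite connected $H$ with $\mathfrak{B}_r(H)=\mathfrak{B}_r(G_n)$ has a degree-$3$ vertex whose ball forces the marker. The forced degree-$2$ walk leaving it either closes off a finite dumbbell (excluded) or is a ray, so $H\cong G_n$. That is only the ``hence also'' half of the statement, so your implication runs the wrong way.

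The missing idea for $\mathfrak{G}_{\le D}$ is a direction that is readable locally, together with an imbalance that no finite graph can carry. The paper gets this from the directed out-star. By Lemma~\ref{IsolatedDirected}, $\vec{S}_{m,\infty}$ is the only digraph whose $1$-balls are a source with $m$ out-arcs and an in-$1$/out-$1$ vertex. No finite digraph has these $1$-balls, since in- and out-degrees must have equal sums. The asymmetric gadgets $\operatorname{sd}^k(H_0)$ have the unique decomposition property, which makes each arc's orientation visible in bounded-radius balls. Lemma~\ref{GadgetDecodingLemma} and Corollary~\ref{cor:isolatedproduct} then transfer isolation to $\vec{S}_{3,\infty}\ast\operatorname{sd}^k(H_0)$.

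A repair closer to your construction: decorate the ray with a periodic pattern of pendant trees whose period block is not a rotation of its own reverse. Then every ball shows the ray pointing away from the marker, any reversal of the pattern is visible locally, and a second marker would have to be entered against the pattern.

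Your counting argument for ``at most countably many'' is fine and matches the paper's.
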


The graphs we construct to prove Theorem~\ref{stronglyisolated} have
bounded degree but are not regular. The existence of strongly isolated
graphs in $\mathfrak{G}_{=D}^{\infty}$ is not explored in this paper; see
Question~\ref{q:regularisolated}.

\begin{remark}
Since we have restricted the space to infinite graphs, strong isolation in
$\mathfrak{G}_{=D}$ is not needed; being strongly isolated in
$\mathfrak{G}_{=D}^\infty$ suffices.
\end{remark}

\begin{restatable}{theorem}{notmeager}
\label{th:notmeager}
The following statements hold:
\begin{enumerate}
    \item[\textnormal{(i)}] For every $D \ge 3$, the set of infinite
    graphs with a one-way infinite bridge 
    is neither meager nor comeager in~$\mathfrak{G}_{\le D}^{\infty}$.
    \item[\textnormal{(ii)}] For every $D \ge 3$, the set of infinite
    graphs with a two-way infinite bridge 
    is neither meager nor comeager in~$\mathfrak{G}_{\le D}^{\infty}$.
    \item[\textnormal{(iii)}] For every $D \ge 4$, the set of infinite bridgeless graphs 
    is neither meager nor comeager in~$\mathfrak{G}_{\le D}^{\infty}$.
\end{enumerate}
\end{restatable}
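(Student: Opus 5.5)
By Lemma~\ref{lem:isolateisnotmeager}, a set containing a strongly isolated point of $\mathfrak{G}_{\le D}^{\infty}$ is not meager. Consequently, a set is neither meager nor comeager as soon as both the set and its complement contain a strongly isolated infinite graph. The plan is therefore, for each of (i)--(iii), to exhibit two strongly isolated infinite graphs in $\mathfrak{G}_{\le D}^{\infty}$: one with the property and one without it. The key tool is the criterion from the remark after the definition of strong isolation: $G$ is strongly isolated if and only if $\mathfrak{G}_{\mathfrak{B}_r(G)}=\{G\}$ for some $r$, taken here within $\mathfrak{G}_{\le D}^{\infty}$.

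\textbf{Base construction.} I would reuse the family from Theorem~\ref{stronglyisolated}, or build a variant of it. Start from the one-way infinite ray $v_0v_1v_2\dots$ and decorate it so that all local balls of large radius are rigid. For example, attach to $v_0$ a distinctive finite gadget $M$ of maximal degree $D$, such as a $K_4$ minus an edge when $D=3$, or a suitable clique-like block, and make every $v_i$ for $i\ge1$ carry the same pendant decoration. Taking $r$ larger than the diameter of the gadget, the $r$-balls record the following: every vertex lies on a single ray-like structure; exactly one ball type contains the gadget; and every other ball is "periodic ray" type. Given any $G'$ with $\mathfrak{B}_r(G')\cong\mathfrak{B}_r(G)$, I would reconstruct $G'$ as follows. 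Every vertex of $G'$ has a ball from the list. Since the list contains no ball showing two gadgets, or a gadget at both ends, $G'$ is connected and infinite and contains at least one gadget. Starting at a gadget vertex and propagating along the forced unique continuation, which is determined because each ball pins down its neighbors' balls, yields an isomorphism with $G$. The non-trivial point is ruling out a two-way infinite periodic part with no gadget. This is excluded because every ball in $\mathfrak{B}_r(G)$ either sees the gadget or witnesses being on a ray, and connectivity together with propagation from an existing gadget vertex forces the rest.

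\textbf{Variants for each property.} Strong isolation via this propagation argument persists as long as the graph is "ray with a unique marked start, plus periodic decoration".
\begin{itemize}
\item For (i), a graph with a one-way infinite bridge: let the periodic decoration include pendant finite pieces, for instance a leaf hanging at each $v_i$, since the edge to a leaf is a one-way infinite bridge. For the complement, I need a strongly isolated graph with no one-way infinite bridge: use a thickened ray, such as the ladder $P_\infty\square K_2$ with a rigid bridgeless gadget at the start, with degree at most $3$. This graph is bridgeless, so in particular it has no one-way infinite bridge.
\item For (ii), a bridge that is two-way infinite: take two copies of the ladder-with-gadget construction joined by a single edge between the gadgets, which is strongly isolated by the same argument with the joined gadget as the unique marker. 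A bridgeless ladder serves for the complement.
\item For (iii), the bridgeless example is the ladder, and the bridged example is the ray-with-leaves from (i).
\end{itemize}
The restriction $D\ge4$ in (iii) suggests that the bridgeless rigid example needs an extra degree for the marker gadget at the start of the ladder, since ladder vertices already have degree $3$. I would place a $K_5^-$-type block there, or alternatively use the strongly isolated graphs of Theorem~\ref{stronglyisolated} if they are bridgeless.

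\textbf{Main obstacle.} The main obstacle is rigorously verifying strong isolation, that is, showing that $\mathfrak{B}_r(G')=\mathfrak{B}_r(G)$ forces $G'\cong G$. The difficulty is the case where $G'$ contains no gadget or several gadgets. I would handle this by choosing $r$ larger than the gadget's size and observing two things. First, the ball of each non-gadget vertex determines its distance to the gadget up to $r$, while the periodic balls far away see no gadget. Second, "distance-to-gadget" labels propagate: a vertex with a far ball has neighbors whose balls are forced. Then an inductive walk from a gadget vertex reconstructs $G$, and connectivity together with bounded degree shows that nothing else can be attached.
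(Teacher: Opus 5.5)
Your overall strategy matches the paper's: exhibit a strongly isolated infinite graph inside each set and another inside its complement, then apply Lemma~\ref{lem:isolateisnotmeager}. Your one-sided examples, the ray with pendant leaves and the half-ladder $P_{\mathbb N}\square K_2$ with or without a cap, do work in $\mathfrak{G}^{\infty}_{\le D}$. The reason is not the one you give, though. A long gadget--path--gadget graph has only $r$-balls from the list. What excludes a second marker is that it would close the structure into a \emph{finite} graph, and finite graphs are not in the space. (The half-ladder has maximum degree $3$, so this part of your argument even gives (iii) for $D=3$, a case the paper leaves open.)

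The genuine gap is the positive example in (ii). Two gadget-capped ladders joined by an edge $e$ between their gadgets are \emph{not} strongly isolated.
\begin{itemize}
\item The competitor: given $r$, choose $n\gg r$ and let $G'_n$ be a two-way infinite chain of ladder segments of length $n$. Each segment is capped at both ends by your gadget, attached as in $G$. Consecutive segments are joined by an edge between adjacent gadgets, exactly as $e$ joins them in $G$.
\item Same balls: every $r$-ball of $G'_n$ meets at most one joining edge, and the ladder extends beyond radius $r$ on both sides. So it is an $r$-ball of $G$, and conversely every $r$-ball of $G$ occurs in $G'_n$. Hence $\mathfrak{B}_r(G'_n)=\mathfrak{B}_r(G)$.
\item Not isomorphic: $G'_n$ is infinite, connected and of the same maximum degree, but $G'_n\not\cong G$ because it has infinitely many bridges.
\end{itemize}
Thus $d(G,G'_n)\le 2^{-r}$ for every $r$. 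Your propagation step cannot rule this out. Far from a marker, the $r$-balls do not record in which direction the marker lies, so a ladder leaving one marker may arrive at another. With a one-sided marker this only produces finite graphs. With the two-sided marker needed for a two-way infinite bridge, the chain continues forever. As it stands you have no strongly isolated graph with a two-way infinite bridge, so non-meagerness in (ii) is unproved.

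The missing idea is to orient the rays away from the marker. The paper gets this orientation from the directed star $\vec{S}_{m,\infty}$, in which every arc points away from the centre, so a ray leaving one centre can never enter another. It realises the star by an asymmetric gadget with the unique decomposition property, and Corollary~\ref{cor:isolatedproduct} turns this into strong isolation. Its witness for (ii) is $\vec{S}_{3,\infty}\ast H_1$, every bridge of which is two-way infinite; for the complement it uses $\vec{S}_{1,\infty}\ast H_1$. To repair your construction you would have to replace each undirected ladder by a ray whose decoration is asymmetric and points away from the bridge, for instance a ray of copies of $H_1$ oriented outward. That is essentially the paper's gadget product.
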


\begin{restatable}{theorem}{unrootededgecoloring}
    \label{th:unrootededgecoloring}
    For every \(D\geq 3\), the set of $D$-edge-colorable graphs is neither meager nor
    comeager in~$\mathfrak{G}_{\le D}^{\infty}$.
\end{restatable}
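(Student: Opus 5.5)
By Lemma~\ref{lem:isolateisnotmeager}, it suffices to exhibit two infinite graphs in $\mathfrak{G}_{\le D}^{\infty}$ that are strongly isolated in $\mathfrak{G}_{\le D}^{\infty}$: one $D$-edge-colorable and one not. The first shows that the set of $D$-edge-colorable graphs is not meager. The second shows that its complement is not meager, so the set itself is not comeager.

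To get strong isolation, I would reuse the mechanism behind Theorem~\ref{stronglyisolated}. That mechanism is an infinite one-ended ray-like skeleton (for instance a ray with a degree-one endpoint) decorated with finite gadgets that encode position. The gadgets should make every sufficiently large rooted ball $B_r(G,v)$ determine where $v$ sits relative to the start of the ray. Then $\mathfrak{B}_r(G)$ forces the whole graph, so $\mathfrak{G}_{\mathfrak{B}_r(G)}\cap\mathfrak{G}^{\infty}_{\le D}=\{G\}$.

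Concretely, I would start from a strongly isolated graph $G_0$ as produced in the proof of Theorem~\ref{stronglyisolated}. I would first check, or arrange, that $G_0$ is $D$-edge-colorable. A graph of maximum degree at most $D$ built on a ray with pendant finite trees or paths is bipartite. By K\H{o}nig's theorem, applied in its infinite version via compactness or directly by greedily coloring along the ray, it is then $\Delta$-edge-colorable, hence $D$-edge-colorable. If the gadgets contain odd cycles, I would instead write down an explicit coloring, propagated along the ray.

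For the non-colorable example, I would attach to $G_0$, at a uniquely identifiable location (say near the degree-one start of the ray, where the local structure is already rigid), a finite gadget $K$ that is not $D$-edge-colorable and has maximum degree at most $D$. Natural choices are the Petersen graph with one edge subdivided (degrees $\le 3$, still class 2) for $D=3$, and in general $K_{D+1}$ with $D$ even or a suitable overfull subgraph. An overfull subgraph works well here: take a graph on $2k+1$ vertices with more than $kD$ edges and maximum degree $D$, leaving one vertex of degree $D-1$ to attach to the skeleton. Any graph containing it fails to be $D$-edge-colorable, since each color class meets at most $k$ of its edges. For instance, $K_{D+1}$ minus a perfect matching-like set, adjusted for parity, with one spare degree.

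The main obstacle is confirming that attaching $K$ does not destroy strong isolation. Large balls must still pin down the entire graph, and no other infinite graph of maximum degree $\le D$ may share the same ball collection. I would handle this by choosing $K$ to occur exactly once, at a position recognizable within bounded radius. Every ball around a vertex far from $K$ then looks exactly like the corresponding ball in $G_0$, and the isolation argument of Theorem~\ref{stronglyisolated} goes through verbatim once $r$ exceeds the diameter of $K$ plus the previous radius.
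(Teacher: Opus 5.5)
Your overall strategy matches the paper's: exhibit two infinite graphs that are strongly isolated in $\mathfrak{G}_{\le D}^{\infty}$, one $D$-edge-colorable and one not, and apply Lemma~\ref{lem:isolateisnotmeager}. Your colorable example is also essentially the paper's, $\vec{S}_{3,\infty}\ast H_0$. Since $H_0$ contains a triangle, the bipartite/K\H{o}nig route fails for $k=0$; it does work for $\operatorname{sd}^k(H_0)$ with $k\ge 1$. Your fallback, permuting a fixed $3$-edge-coloring of $H_0$ copy by copy outward along the tree skeleton, is exactly what the paper does.

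\textbf{The gap is in the non-colorable example,} at the claim that the isolation argument ``goes through verbatim''. Your description of that argument is off. All gadget copies are identical, so for a fixed $r$ the $r$-balls far out on a ray are all isomorphic and cannot locate their root relative to the start. Strong isolation in Theorem~\ref{stronglyisolated} comes instead from Corollary~\ref{cor:isolatedproduct}. There, Lemma~\ref{GadgetDecodingLemma} uses the unique decomposition property to rewrite any sufficiently close graph as $\vec{G}_n\ast H$, and Lemma~\ref{IsolatedDirected} then pins down the skeleton. Your graph $G=G_0+K$, with a single foreign block $K$, is not of the form $\vec{G}\ast H$, so none of this applies as stated. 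The paper avoids the issue by building the obstruction into the gadget. $C^D$ is $H_0$ with $J_D$ hung off $q_1$ and a lengthened middle path, and it keeps the unique decomposition property. So $\vec{S}_{3,\infty}\ast C^D$, which has a copy of $J_D$ in every gadget, is strongly isolated directly by Corollary~\ref{cor:isolatedproduct}.

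Your single-attachment route can be repaired, but it needs an argument you have not given. Take $X$ with $\mathfrak{B}_r(X)=\mathfrak{B}_r(G)$. You must do three things.
\begin{enumerate}
\item Show that copies of $K$ in $X$ are locally recognizable pendant blocks. This requires $K$ to contain structure absent from $G_0$.
\item Delete them to get a connected $X'$ with $\mathfrak{B}_{r-\rho}(X')=\mathfrak{B}_{r-\rho}(G_0)$ for a fixed $\rho$, so that $X'\cong G_0$.
\item Show that exactly one copy of $K$ is reattached, and at the right vertex.
\end{enumerate}

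Step (3) is the real content. Ball collections are sets and do not record multiplicities, so ``$K$ occurs exactly once in $G$'' does not transfer to $X$ automatically. Suppose the attachment vertex $z$ had, for every $R$, infinitely many vertices $z'$ with $B_R(G_0,z')\cong B_R(G_0,z)$. Then $G_0$ with $K$ attached at both $z$ and a distant such $z'$ would have the same $r$-ball collection as $G$, and $G$ would not be strongly isolated. So you must use that $z$ lies at bounded distance from the center of the skeleton. Then, for $r$ large, every admissible attachment point lies in the $\operatorname{Aut}(G_0)$-orbit of $z$. Any two such points fit in one $r$-ball around the center, and there $G$ shows only one copy of $K$.

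Two smaller points:
\begin{enumerate}
\item For $D=3$ the attachment vertex must have degree at most $2$ in $G_0$. The center of $\vec{S}_{3,\infty}\ast H_0$ has degree $3$, but the pendant vertex of a gadget at the center works.
\item For even $D$, $K_{D+1}$ itself cannot be attached without exceeding degree $D$. Use $K_{D+1}-e$, i.e.\ the paper's $J_D$.
\end{enumerate}
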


\begin{remark}
The gadget construction below proves
Theorem~\ref{th:notmeager}\,\textnormal{(iii)} for $D\ge4$ and leaves the
case $D=3$ open. A bridgeless strongly isolated graph of maximum degree
$3$ would settle this remaining case, but strong isolation is only a
sufficient condition for nonmeagerness; a different category argument
could also resolve it.
\end{remark}

This contrasts with the rooted setting: by
Theorem~\ref{rootedbridgelessset} and Corollary~\ref{rootedinfinitebridgelessset},
the set of bridgeless rooted graphs and the set of rooted graphs with a
two-way infinite bridge are both meager and by Theorem~\ref{edgecoloring} the
set of $D$-edge colorable rooted graphs is nowhere dense.  However, the study
of typicality restricted to infinite regular graphs remains open.

Our proofs rely on finding strongly isolated graphs: a small enough ball
around such a graph contains no other graph. While for some graphs this 
property is easy to verify, and there are only countably many such graphs, 
deciding whether a given ball contains any graph at all is already undecidable.
Let us use $\mathfrak{G}^\infty_{\mathfrak{F}}$ for 
$\mathfrak{G}_{\mathfrak{F}} \cap \mathfrak{G}_{\le D}^{\infty}$. 
As a corollary of Theorem~1 in \cite{csoka2012undecidability} we get the following: 

\begin{theorem}[{Cs\'oka~\cite{csoka2012undecidability}}]
\label{csoka-infinite}
In $\mathfrak{G}_{\le D}^{\infty}$ for $D\ge 4$, given a finite collection $\mathfrak{F}$ of rooted $r$-balls, it is undecidable
whether $\mathfrak{G}_{\mathfrak{F}}^\infty$ is nonempty.
\end{theorem}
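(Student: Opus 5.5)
The plan is to derive the statement from Csóka's Theorem~1 in~\cite{csoka2012undecidability} by a finite Turing reduction, not to reprove undecidability from scratch. As I read it, Csóka's result says: given $D\ge4$, a radius $r$ and a finite set $\mathfrak{F}$ of rooted $r$-balls of maximum degree at most $D$, it is undecidable whether some connected graph $G$ of maximum degree at most $D$ has \emph{all} its rooted $r$-balls in $\mathfrak{F}$, i.e.\ $\mathfrak{B}_r(G)\subseteq\mathfrak{F}$. Csóka proves this by encoding the domino (Wang tiling) problem of Berger into local constraints on a grid-like graph of degree at most $4$. Our statement differs in two ways: we ask for equality $\mathfrak{B}_r(G)\cong\mathfrak{F}$, and we require $G$ to be infinite.

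First I would handle equality versus containment. For any connected $G$, $\mathfrak{B}_r(G)\subseteq\mathfrak{F}$ holds if and only if $G\in\mathfrak{G}_{\mathfrak{F}'}$ for the nonempty subcollection $\mathfrak{F}'=\mathfrak{B}_r(G)$. Since $\mathfrak{F}$ is finite, it has only finitely many nonempty subcollections $\mathfrak{F}'$, and they can be listed effectively. Suppose nonemptiness of $\mathfrak{G}^\infty_{\mathfrak{F}'}$ were decidable. Then we could decide whether an infinite $G$ with $\mathfrak{B}_r(G)\subseteq\mathfrak{F}$ exists by testing each $\mathfrak{F}'$ in turn. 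It therefore suffices to show that the containment problem restricted to infinite graphs is undecidable.

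Second, I would handle infiniteness, which I expect to be the main obstacle, since it depends on the details of Csóka's construction. If his theorem is stated for infinite graphs, or if the reduction already yields only infinite realizations, there is nothing to do. Otherwise I would use the reduction's structure. A realization of the ball collection produced from a tile set encodes a valid tiling of a grid-like structure whose local balls force every vertex to have ``grid neighbours'' in all four directions. I would then argue that a finite realization gives a periodic tiling of the plane; equivalently, the constraint set does not admit a finite graph except in degenerate cases that can be excluded syntactically.

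If that argument turns out to be delicate, the fallback is to modify $\mathfrak{F}$ effectively so that finite realizations are excluded. One option is to pass to a larger radius and add a marker pattern that forces an infinite ray. Another is to use that, by Berger, the tiling problem remains undecidable even when every tile set that tiles the plane also admits a non-periodic tiling, which yields an infinite graph realization. In either case the composite reduction is computable, and undecidability of the original tiling problem transfers to nonemptiness of $\mathfrak{G}^\infty_{\mathfrak{F}}$ for $D\ge4$.
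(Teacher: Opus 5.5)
Your route is the paper's: the paper gives no argument beyond citing Cs\'oka's Theorem~1 and calling the statement a corollary. Your reduction from ``every $r$-ball of $G$ lies in $\mathfrak{F}$'' to the exact condition $\mathfrak{B}_r(G)\cong\mathfrak{F}$ is correct. It works by running a hypothetical decider on each of the finitely many nonempty subcollections $\mathfrak{F}'\subseteq\mathfrak{F}$, and it makes explicit a point the paper passes over. The tacit passage from degree $4$ to every $D\ge4$ is also harmless. A collection of balls of maximum degree at most $4$ is a legal input for every $D\ge4$, and for $r\ge1$ it forces every realization to have maximum degree at most $4$.

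The infiniteness step, which you rightly single out, is where your reasoning goes wrong. Your main line says that since a finite realization yields a periodic tiling, ``equivalently, the constraint set does not admit a finite graph except in degenerate cases that can be excluded syntactically''. This does not follow, and it is false. If the tile set $T$ has a periodic tiling, take a decorated torus whose side lengths are large multiples of the periods. It has exactly the same $r$-balls as that planar tiling, so it is a genuine finite realization of $\mathfrak{F}_T$. Such cases cannot be filtered out syntactically, because periodic tileability is itself undecidable (Gurevich and Koryakov).

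Likewise, you do not need the reduction to yield \emph{only} infinite realizations. The non-periodicity hypothesis in your Berger fallback does no work: the fallback reaches the right conclusion, but for a reason unrelated to periodicity. The point you are missing is simple. In a grid encoding of the kind you describe, every tiling of the plane, periodic or not, yields the decorated $\mathbb{Z}^2$ grid itself as a realization, and that graph is infinite and connected. Conversely, every realization, finite or infinite, unfolds to a tiling of the plane. Hence an infinite connected graph with all $r$-balls in $\mathfrak{F}_T$ exists if and only if $T$ tiles the plane, with no need to exclude finite realizations. Together with your subcollection argument, this gives the statement.
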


Based on this, we conjecture that testing whether a specified basic neighborhood is a singleton
is also undecidable. Thus, we expect there is no characterization of 
strongly isolated graphs. 

\begin{conjecture} 
\label{Undecidableuniquness}
Consider $\mathfrak{G}_{\le D}^{\infty}$ for $D\ge 3$. 
Assume $\mathfrak{F}$ is a finite collection of rooted $r$-balls such that
$\mathfrak{G}_{\mathfrak{F}}^\infty$ is nonempty. It is undecidable whether
$|\mathfrak{G}_{\mathfrak{F}}^\infty| > 1$.
\end{conjecture}

\subsection{The gadget construction}

The proofs of Theorems~\ref{stronglyisolated}, \ref{th:notmeager} and~\ref{th:unrootededgecoloring} rely
on a construction that produces strongly isolated graphs, which we now
describe. We first find strongly isolated points among directed graphs.

\begin{restatable}{lemma}{IsolatedDirected}
\label{IsolatedDirected}
For every \(1\le m\le D\), the infinite directed star \(\vec{S}_{m,\infty}\)
is strongly isolated in \(\vec{\mathfrak{G}}_{\le D}\), and hence also in
\(\vec{\mathfrak{G}}_{\le D}^{\infty}\). Here, \(\vec{S}_{m,\infty}\)
denotes the directed star with \(m\) infinite outgoing branches; see
Figure~\ref{fig:star} for the case \(m=3\).
\end{restatable}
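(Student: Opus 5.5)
We show that the rooted $1$-balls already determine $\vec{S}_{m,\infty}$. By the remark characterizing strong isolation, it suffices to prove $\vec{\mathfrak{G}}_{\mathfrak{B}_1(\vec{S}_{m,\infty})}=\{\vec{S}_{m,\infty}\}$, i.e.\ every connected digraph $G'$ of total degree at most $D$ with $\mathfrak{B}_1(G')\cong\mathfrak{B}_1(\vec{S}_{m,\infty})$ is isomorphic to $\vec{S}_{m,\infty}$. If $r=1$ turns out to be too coarse in some degenerate case (e.g.\ $m=1$, or parallel or antiparallel edges), we pass to $r=2$; the argument is unchanged.

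\emph{Step 1: list the rooted $1$-balls of $\vec{S}_{m,\infty}$.} Let $c$ be the centre. There are essentially three types.
\begin{itemize}
\item[(a)] The centre ball: root of in-degree $0$, with $m$ out-neighbours and no edges among them.
\item[(b)] The ball at a neighbour of $c$: root with in-neighbour $c$ and one out-neighbour, with $c$ having further out-edges visible.
\item[(c)] A generic path ball: root with exactly one in-neighbour and one out-neighbour, and no edge between them.
\end{itemize}
Here $m\le D$ guarantees $\vec{S}_{m,\infty}$ lies in the space. Since the $1$-ball of a vertex records its full in- and out-degree and the induced edges among its neighbours, every vertex of $G'$ satisfies exactly one of the following. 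Either it has in-degree $0$ and out-degree $m$ (a \emph{centre}), or it has in-degree $1$ and out-degree $1$. Moreover, no two neighbours of a vertex are adjacent. Because the ball collections coincide, type (a) occurs, so $G'$ contains at least one centre $c'$.

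\emph{Step 2: build a copy of $\vec{S}_{m,\infty}$ inside $G'$.} From $c'$, follow each of its $m$ out-edges and keep following the unique out-edge. Every non-centre has out-degree exactly $1$, and no walk can enter a centre, since centres have in-degree $0$. Hence each such walk is infinite. The walks cannot meet or self-intersect: a first repeated vertex would have in-degree at least $2$, or would be $c'$, whose in-degree is $0$. This yields $m$ pairwise disjoint infinite directed rays from $c'$, i.e.\ a subgraph $H\cong\vec{S}_{m,\infty}$.

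\emph{Step 3: show $H=G'$.} Each vertex of $H$ has all its incident edges inside $H$. For $c'$ this holds because all its out-edges start rays and it has no in-edges. For a ray vertex, its unique in-edge comes from its predecessor on the ray and its unique out-edge goes to its successor. Extra edges between vertices of $H$ are excluded by these degree counts. Thus $H$ is a union of components of $G'$, and connectivity gives $G'=H$. This proves strong isolation in $\vec{\mathfrak{G}}_{\le D}$. Since $\vec{S}_{m,\infty}$ is infinite, the same $\varepsilon$ works in the subspace $\vec{\mathfrak{G}}^{\infty}_{\le D}$.

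The main obstacle I expect is Step 1: checking carefully that $1$-balls really pin down the local in/out-degree pattern, including the exclusion of loops, digons and parallel edges. I would handle it by noting that all such features appear in the induced rooted $1$-ball and do not occur in $\vec{S}_{m,\infty}$. If some case slips through at radius $1$, I would simply use radius $2$.
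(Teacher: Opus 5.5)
Your proof is correct and is essentially the paper's argument. Both show that the rooted $1$-balls $\{\vec{P_2^\bullet},\vec{S}_{m,1}^\bullet\}$ already force $G'\cong\vec{S}_{m,\infty}$ by propagating from the source along out-edges; your Steps 2--3 spell out the disjointness and closure details that the paper compresses into ``by induction.''

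One harmless slip: your type (b) is not a separate ball type. The centre's other out-neighbours lie at distance $2$ from the root, so they are absent from the induced $1$-ball, and (b) coincides with (c). Radius $1$ therefore suffices in every case, with no need to fall back on $r=2$.
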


\begin{figure}[h]
\centering
\begin{tikzpicture}[>=Stealth, scale=.5]

\def\n{3}          
\def\R{0.7}        
\def\step{0.6}     
\def\shown{2}      
\def\dotrad{0.9pt} 

\node[circle, fill=black, inner sep=1.6pt] (c) at (0,0) {};

\foreach \i in {1,...,\n} {
    \pgfmathsetmacro{\angle}{90 + (\i-1)*360/\n}

    \coordinate (p\i0) at ({\R*cos(\angle)},{\R*sin(\angle)});
    \draw[->] (c) -- (p\i0);
    \node[circle, fill=black, inner sep=1.2pt] at (p\i0) {};

    \foreach \k in {1,...,\shown} {
        \pgfmathsetmacro{\distprev}{\R + (\k-1)*\step}
        \pgfmathsetmacro{\distnext}{\R + \k*\step}

        \coordinate (p\i\k prev) at ({\distprev*cos(\angle)},{\distprev*sin(\angle)});
        \coordinate (p\i\k)      at ({\distnext*cos(\angle)},{\distnext*sin(\angle)});

        \draw[->] (p\i\k prev) -- (p\i\k);
        \node[circle, fill=black, inner sep=1.2pt] at (p\i\k) {};
    }

    \pgfmathsetmacro{\dota}{\R + (\shown+0.6)*\step}
    \pgfmathsetmacro{\dotb}{\R + (\shown+1.0)*\step}
    \pgfmathsetmacro{\dotc}{\R + (\shown+1.4)*\step}

    \fill ({\dota*cos(\angle)},{\dota*sin(\angle)}) circle (\dotrad);
    \fill ({\dotb*cos(\angle)},{\dotb*sin(\angle)}) circle (\dotrad);
    \fill ({\dotc*cos(\angle)},{\dotc*sin(\angle)}) circle (\dotrad);
}

\end{tikzpicture}
\caption{Infinite directed 3-star $\vec{S}_{3,\infty}$.}
\label{fig:star}
\end{figure}

Now, our goal is to carry this over to the space of simple graphs. For this
purpose, we use the following construction, although other constructions
may also be considered.

Let $\vec{G}=(V, A)$ be an oriented graph. Let $H$ be a finite connected
simple graph with two distinguished vertices $v$ and $w$ such that no
automorphism $\varphi\in\operatorname{Aut}(H)$ satisfies $\varphi(v)=w$. We
regard $H$ as a two-terminal gadget with ordered terminals $(v,w)$, and
call it an \emph{asymmetric gadget}. 
We next describe a gadget operation that is a special case of 
a more general notion studied by Hartman, Hons, and Ne{\v{s}}et{\v{r}}il~\cite{hartman2025gadget}. 
We define an undirected graph $\vec{G}\ast H$
as follows. For each arc $(x,y)\in A$, take a vertex-disjoint copy
$H_{(x,y)}$ of $H$, identify its terminal $v$ with the vertex $x$, and
identify its terminal $w$ with the vertex $y$. Apart from these
identifications, all vertices and edges remain distinct.

Thus, each gadget $H$ induces a map from $\vec{\mathfrak{G}}_{\le D}$ to
$\mathfrak{G}$ sending $\vec{G}$ to $\vec{G}\ast H$. This map preserves
convergence. The following observation is an analogy of Theorem 3.1 
in Hartman, Hons, and Ne{\v{s}}et{\v{r}}il~\cite{hartman2025gadget}. Note, however, that 
convergence in Hausdorff pseudodistance (naive convergence of Elek) is \textbf{not} equivalent 
to $FO_0$-convergence of~\cite{hartman2025gadget}.
Anyway, our specific case is easy to show directly: for every $r$, the set of $r$-neighborhoods 
in $\vec{G} \ast H$ is determined by the set of $r$-neighborhoods in~$\vec{G}$.

\begin{restatable}{observation}{GadgetConvergenceLemma}
\label{GadgetConvergenceLemma}
Let $H$ be an asymmetric gadget. If $(\vec{G}_n)$ is a sequence of finite
oriented graphs such that $\vec{G}_n \longrightarrow \vec{G}$, then
$\vec{G}_n \ast H \longrightarrow \vec{G} \ast H$. In particular, if
$\vec{G}$ is approximable by finite directed graphs, then $\vec{G}\ast H$
is approximable by finite graphs.
\end{restatable}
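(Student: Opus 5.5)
The plan is to prove the stronger local statement announced just before the observation: for every $r$, the collection $\mathfrak{B}_r(\vec{G}\ast H)$ is determined by $\mathfrak{B}_{r}(\vec{G})$. Since convergence in the pseudometric $d$ means exactly that, for each fixed $r$, the collections of rooted $r$-balls eventually coincide, this gives the first claim. Indeed, if $\vec{G}_n\to\vec{G}$, then for each $r$ we have $\mathfrak{B}_r(\vec{G}_n)\cong\mathfrak{B}_r(\vec{G})$ for all large $n$, hence $\mathfrak{B}_r(\vec{G}_n\ast H)\cong\mathfrak{B}_r(\vec{G}\ast H)$, and so $d(\vec{G}_n\ast H,\vec{G}\ast H)\le 2^{-r}$ eventually. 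The second claim is then immediate, because $\vec{G}_n\ast H$ is finite whenever $\vec{G}_n$ is. Connectedness and degree bounds cause no trouble: $\vec{G}\ast H$ is connected when $\vec{G}$ is, and it has bounded degree, since a vertex of $\vec G$ has degree at most $D\cdot\Delta(H)$. So it lives in some $\mathfrak{G}_{\le D'}$, and convergence is meant there.

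For the locality step, fix $r$ and a vertex $u$ of $\vec{G}\ast H$. Either $u$ is an original vertex $x\in V$, or $u$ is an internal vertex of some copy $H_{(x,y)}$. Set $R:=r$. Every edge of $\vec G$ is replaced by a copy of $H$, in which $v$ and $w$ are at distance at least $1$. Hence a path of length at most $r$ in $\vec{G}\ast H$ starting at $u$ only visits gadgets attached to arcs lying within $\vec G$-distance $R$ of $x$ (resp.\ of $x$ or $y$). Consequently $B_r(\vec{G}\ast H,u)$ is a rooted induced subgraph of $B_{R+1}(\vec{G},x)\ast H$, cut down to distance $r$ from $u$. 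Here we use that distances inside $\vec G\ast H$ between vertices of this region are realized by paths that stay in it, so truncating at radius $R+1$ loses nothing.

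It follows that $B_r(\vec{G}\ast H,u)$ is a function of the rooted ball $B_{R+1}(\vec G,x)$ together with, in the second case, the choice of a gadget-internal vertex $h\in V(H)$ and of an arc at $x$. To encode the second case I would root at the tail $x$ and record the out-arc and $h$. This is finite data, and its possible values are exactly the rooted $(R+1)$-balls of $\vec G$ decorated in this way. Hence $\mathfrak{B}_r(\vec G\ast H)$ is the image of $\mathfrak{B}_{R+1}(\vec G)$ under a fixed map, so equal ball collections of $\vec G_n$ and $\vec G$ at radius $R+1$ give equal collections for the gadget graphs at radius $r$.

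The main obstacle is the bookkeeping that makes this map well defined up to root-preserving isomorphism. One must check that an isomorphism of rooted digraph balls induces an isomorphism of the gadget balls; this holds because the orientation tells us which terminal is $v$ and which is $w$, so copies of $H$ are glued consistently. One also has to handle the mismatch in radius between $R+1$ and $r$, which is harmless since convergence at all radii is assumed. Notably, asymmetry of $H$ is not needed for this direction; it matters only later, for strong isolation of $\vec G\ast H$.
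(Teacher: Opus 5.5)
Your proposal is correct and takes the same approach as the paper, which justifies the observation in one line: for every $r$, the set of $r$-balls of $\vec G\ast H$ is determined by the set of balls of $\vec G$. You fill in the details, namely the induced region $B_{r+1}(\vec G,x)\ast H$ and decorated rooted balls for roots inside a gadget; determining radius-$r$ balls from radius $r+1$ rather than radius $r$ is harmless for convergence, and your remark that asymmetry of $H$ is not needed for this direction is accurate.
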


The following question asks whether the converse implications hold. 

\begin{restatable}{question}{GadgetApproximabilityQ}
\label{GadgetApproximabilityQ}
Let $H$ be an asymmetric gadget, and let $\vec{G}$ be an infinite oriented
graph.
\begin{itemize}
    \item[\rm(i)] \textbf{Converse of
    Observation~\ref{GadgetConvergenceLemma}.} Let $(\vec{G}_n)$ be a
    sequence of finite oriented graphs. If
    $\vec{G}_n \ast H \longrightarrow \vec{G}\ast H$, does it follow that
    $\vec{G}_n \longrightarrow \vec{G}$?
    \item[\rm(ii)] \textbf{Converse of approximability.} If
    $\vec{G} \ast H$ is approximable by finite graphs, or by a sequence of
    pairwise non-isomorphic infinite graphs, does it follow respectively
    that $\vec{G}$ is approximable by finite oriented graphs, or by a
    sequence of pairwise non-isomorphic infinite oriented graphs?
\end{itemize}
\end{restatable}

In full generality, the answer to both parts of
Question~\ref{GadgetApproximabilityQ} is negative; counterexamples are
given in Figure~\ref{fig:negative-converse} for part~{\rm(i)} and
Figure~\ref{fig:negative-approximability} for part~{\rm(ii)}.

For part~{\rm(i)}, the asymmetric gadget $H$ of
Figure~\ref{fig:negative-converse} identifies two non-isomorphic oriented
graphs: $\vec{G}_1 \ncong \vec{G}_2$, but
$\vec{G}_1\ast H \cong \vec{G}_2\ast H$. Alternating accordingly between
two sequences yields a non-convergent sequence of oriented graphs whose
image under the gadget operation converges.

\begin{figure}[H]
        \centering

    \begin{subfigure}{0.3\textwidth}
        \centering
        \begin{tikzpicture}[
            scale=1,
            every node/.style={circle, fill=black, inner sep=1.5pt}
        ]
          \node (v1) at (0,0) {};
          \node (v2) at (1,0) {};
          \node (v3) at (2,0) {};
          \node (v4) at (2,1) {};

          \draw[-] (v1) -- (v2);
          \draw[-] (v2) -- (v3);
          \draw[-] (v3) -- (v4);

          \node[below=3pt, fill=none, inner sep=0pt] at (v1) {$v$};
          \node[below=3pt, fill=none, inner sep=0pt] at (v3) {$w$};
        \end{tikzpicture}
        \caption{$H$}
    \end{subfigure}
    \hfill
    \begin{subfigure}{0.3\textwidth}
        \centering
        \begin{tikzpicture}[
            scale=1,
            every node/.style={circle, fill=black, inner sep=1.5pt}
        ]
          \node (l) at (0,0) {};
          \node (u) at (1,1) {};
          \node (d) at (1,-1) {};
          \node (r) at (2,0) {};

          \draw[->] (l) -- (u);
          \draw[->] (d) -- (l);
          \draw[->] (d) -- (r);
          \draw[->] (r) -- (u);
          \draw[->] (u) -- (d);
        \end{tikzpicture}
        \caption{$\vec{G}_1$}
    \end{subfigure}
    \hfill
    \begin{subfigure}{0.3\textwidth}
        \centering
        \begin{tikzpicture}[
            scale=1,
            every node/.style={circle, fill=black, inner sep=1.5pt}
        ]
          \node (l) at (0,0) {};
          \node (u) at (1,1) {};
          \node (d) at (1,-1) {};
          \node (r) at (2,0) {};

          \draw[->] (l) -- (u);
          \draw[->] (u) -- (r);
          \draw[->] (r) -- (d);
          \draw[->] (d) -- (l);
          \draw[->] (d) -- (u);
        \end{tikzpicture}
        \caption{$\vec{G}_2$}
    \end{subfigure}

    \vspace{1em}

    \begin{subfigure}{0.45\textwidth}
        \centering
        \begin{tikzpicture}[xshift=-12,
            scale=.5,
            every node/.style={circle, fill=black, inner sep=1.5pt}
        ]
          \node (l) at (0,0) {};
          \node (u) at (1,1) {};
          \node (d) at (1,-1) {};
          \node (r) at (2,0) {};

          \draw[->] (l) -- (u);
          \draw[->] (d) -- (l);
          \draw[->] (d) -- (r);
          \draw[->] (r) -- (u);
          \draw[->] (u) -- (d);

          \node (p1) at (-1,0) {};
          \node (p2) at (-1.5,2) {};
          \node[draw=none, fill=none, inner sep=0pt] (p3) at (1,3) {$\cdots$};
          \node (p4) at (3.5,2) {};
          \node (p5) at (3,0) {};

          \node[below=3pt, fill=none, inner sep=0pt] at (p1) {$n$};
    
          \node[left=1pt, fill=none, inner sep=0pt] at (p2) {$n-1$};
           \node[right=1pt, fill=none, inner sep=0pt] at (p4) {$2$};
           
          \node[below=3pt, fill=none, inner sep=0pt] at (p5) {$1$};

          \draw[<-] (l) -- (p1);
          \draw[<-] (p1) -- (p2);
          \draw[<-] (p2) -- (p3);
          \draw[<-] (p3) -- (p4);
          \draw[<-] (p4) -- (p5);
          \draw[<-] (p5) -- (r);
        \end{tikzpicture}
        \caption{$\vec{A}_n$}
    \end{subfigure}
    \hfill
    \begin{subfigure}{0.45\textwidth}
        \centering
        \begin{tikzpicture}[
            scale=.5,
            every node/.style={circle, fill=black, inner sep=1.5pt}
        ]
          \node (l) at (0,0) {};
          \node (u) at (1,1) {};
          \node (d) at (1,-1) {};
          \node (r) at (2,0) {};

          \draw[->] (l) -- (u);
          \draw[->] (d) -- (l);
          \draw[->] (d) -- (r);
          \draw[->] (r) -- (u);
          \draw[->] (u) -- (d);

          \foreach \i in {1,2} {
            \pgfmathtruncatemacro{\x}{-2*\i}
            \node (L\i) at (\x,0) {};
            \ifnum\i=1
              \draw[->] (L\i) -- (l);
            \else
              \pgfmathtruncatemacro{\j}{\i-1}
              \draw[->] (L\i) -- (L\j);
            \fi
          }
          \node[fill=none] at (-5.2,0) {$\cdots$};

          \foreach \i in {1,2} {
            \pgfmathtruncatemacro{\x}{2*\i+2}
            \node (R\i) at (\x,0) {};
            \ifnum\i=1
              \draw[->] (r) -- (R\i);
            \else
              \pgfmathtruncatemacro{\j}{\i-1}
              \draw[->] (R\j) -- (R\i);
            \fi
          }
          \node[fill=none] at (7.2,0) {$\cdots$};
        \end{tikzpicture}
        \caption{$\vec{A}$: $\vec{A}_n\to\vec{A}$}
    \end{subfigure}

    \begin{subfigure}{0.45\textwidth}
        \centering
        \begin{tikzpicture}[
            scale=.5,
            every node/.style={circle, fill=black, inner sep=1.5pt}
        ]
          \node (l) at (0,0) {};
          \node (u) at (1,1) {};
          \node (d) at (1,-1) {};
          \node (r) at (2,0) {};

          \draw[->] (l) -- (u);
          \draw[->] (u) -- (r);
          \draw[->] (r) -- (d);
          \draw[->] (d) -- (l);
          \draw[->] (d) -- (u);
          
          \node (p1) at (-1,0) {};
          \node (p2) at (-1.5,2) {};
          \node[draw=none, fill=none, inner sep=0pt] (p3) at (1,3) {$\cdots$};
          \node (p4) at (3.5,2) {};
          \node (p5) at (3,0) {};

          \node[below=3pt, fill=none, inner sep=0pt] at (p1) {$n$};
    
          \node[left=1pt, fill=none, inner sep=0pt] at (p2) {$n-1$};
           \node[right=1pt, fill=none, inner sep=0pt] at (p4) {$2$};
           
          \node[below=3pt, fill=none, inner sep=0pt] at (p5) {$1$};

          \draw[<-] (l) -- (p1);
          \draw[<-] (p1) -- (p2);
          \draw[<-] (p2) -- (p3);
          \draw[<-] (p3) -- (p4);
          \draw[<-] (p4) -- (p5);
          \draw[<-] (p5) -- (r);
        \end{tikzpicture}
        \caption{$\vec{B}_n$}
    \end{subfigure}
\hfill
    \begin{subfigure}{0.45\textwidth}
        \centering
        \begin{tikzpicture}[
            scale=.5,
            every node/.style={circle, fill=black, inner sep=1.5pt}
        ]
          \node (l) at (0,0) {};
          \node (u) at (1,1) {};
          \node (d) at (1,-1) {};
          \node (r) at (2,0) {};

          \draw[->] (l) -- (u);
          \draw[->] (u) -- (r);
          \draw[->] (r) -- (d);
          \draw[->] (d) -- (l);
          \draw[->] (d) -- (u);

          \foreach \i in {1,2} {
            \pgfmathtruncatemacro{\x}{-2*\i}
            \node (L\i) at (\x,0) {};
            \ifnum\i=1
              \draw[->] (L\i) -- (l);
            \else
              \pgfmathtruncatemacro{\j}{\i-1}
              \draw[->] (L\i) -- (L\j);
            \fi
          }
          \node[fill=none] at (-5.2,0) {$\cdots$};

          \foreach \i in {1,2} {
            \pgfmathtruncatemacro{\x}{2*\i+2}
            \node (R\i) at (\x,0) {};
            \ifnum\i=1
              \draw[->] (r) -- (R\i);
            \else
              \pgfmathtruncatemacro{\j}{\i-1}
              \draw[->] (R\j) -- (R\i);
            \fi
          }
          \node[fill=none] at (7.2,0) {$\cdots$};
          
        \end{tikzpicture}
        \caption{$\vec{B}$: $\vec{B}_n\to\vec{B}$}

    \end{subfigure}
    \caption{Negative answer to
    Question~\ref{GadgetApproximabilityQ}{\rm(i)}. We have
    $\vec{G}_1 \ncong \vec{G}_2$, but
    $\vec{G}_1\ast H \cong \vec{G}_2\ast H$. Moreover, if
    $\vec{C}_n=\vec{A}_n$ for odd $n$ and $\vec{C}_n=\vec{B}_n$ for even
    $n$, then $(\vec{C}_n)$ is not convergent, while $(\vec{C}_n\ast H)$
    converges to $\vec{A}\ast H$.}
\label{fig:negative-converse}
\end{figure}

For part~{\rm(ii)}, Figure~\ref{fig:negative-approximability} shows an
asymmetric gadget $H$ and an infinite oriented graph $\vec{G}$ such that
$\vec{G}\ast H$ is approximable by finite graphs and also by a sequence of
pairwise non-isomorphic infinite graphs, while $\vec{G}$ is not
approximable by finite oriented graphs, nor by a sequence of pairwise
non-isomorphic infinite oriented graphs.

\begin{figure}[H]
\input{figuers/figure7}
\caption{Negative answer to
Question~\ref{GadgetApproximabilityQ}{\rm(ii)}.}
\label{fig:negative-approximability}
\end{figure}

However, there are asymmetric gadgets $H$ for which both converse
statements hold.

\begin{definition}[Unique decomposition property]
An asymmetric gadget $H$ has the \emph{unique decomposition property} if,
for every oriented graph $\vec{G}$, the induced subgraphs of
$\vec{G}\ast H$ that are isomorphic to $H$ decompose the entire edge set of
$\vec{G}\ast H$; that is, every edge of $\vec{G}\ast H$ belongs to exactly
one such copy of $H$.
\end{definition}

\begin{figure}[H]
\begin{subfigure}{0.45\textwidth}
\centering
\begin{tikzpicture}[scale=.9, every node/.style={circle, fill=black, inner sep=1.5pt}]
  \foreach \i in {1,...,8} {
    \node (v\i) at (\i-1,0) {};
  }
  \draw (v1)--(v2)--(v3)--(v4)--(v5)--(v6)--(v7)--(v8);
  \node (a) at (1,1) {};
  \node (b) at (2,1) {};
  \node (A) at (5,.8) {};
  \node (B) at (5.5,1) {};
  \node (O) at (4.5,.8) {};

  \node (C) at (6,.8) {};
  \draw (v2)--(a)--(b)--(v3);
  \draw (C)--(B)--(A);
  \draw (A)--(O);
  \draw (C)--(A);

  \draw (C)--(v7);

  \foreach \i in {2,...,7} {
    \node[below=3pt, fill=none, inner sep=0pt] at (v\i) {};
  }
  \node[above=3pt, fill=none, inner sep=0pt] at (a) {$p_1$};
  \node[above=3pt, fill=none, inner sep=0pt] at (b) {$p_2$};
  \node[above=3pt, fill=none, inner sep=0pt] at (A) {$q_1$};
  \node[above=3pt, fill=none, inner sep=0pt] at (C) {$q_2$};
  \node[below=3pt, fill=none, inner sep=0pt] at (v1) {$v$};
  \node[below=3pt, fill=none, inner sep=0pt] at (v8) {$w$};
\end{tikzpicture}

\caption{The gadget $H_0$}
\label{fig:gadget3}
\end{subfigure}
\begin{subfigure}{0.45\textwidth}
\centering
\begin{tikzpicture}[scale=.9, every node/.style={circle, fill=black, inner sep=1.5pt}]
  \foreach \i in {1,...,8} {
    \node (v\i) at (\i-1,0) {};
  }
  \draw (v1)--(v2)--(v3)--(v4)--(v5)--(v6)--(v7)--(v8);
  \node (a) at (1,1) {};
  \node (b) at (2,1) {};
  \node (A) at (5,.8) {};
  \node (B) at (5.5,1) {};
  \node (C) at (6,.8) {};
  \node[above=3pt, fill=none, inner sep=0pt] at (a) {$p_1$};
  \node[above=3pt, fill=none, inner sep=0pt] at (b) {$p_2$};
  \node[above=3pt, fill=none, inner sep=0pt] at (A) {$q_1$};

  \node[above=3pt, fill=none, inner sep=0pt] at (C) {$q_2$};
  \node[below=3pt, fill=none, inner sep=0pt] at (v1) {$v$};
  \node[below=3pt, fill=none, inner sep=0pt] at (v8) {$w$};

  \draw (v2)--(a)--(b)--(v3);
   \draw (v6)--(A)--(B)--(C)--(v7);

\end{tikzpicture}

\caption{The gadget $H_1$}
\label{fig:gadget1}
\end{subfigure}
\centering
\begin{subfigure}{0.45\textwidth}
\begin{tikzpicture}[scale=.9, every node/.style={circle, fill=black, inner sep=1.5pt}]

  \foreach \i in {2,...,7} {
    \node (u\i) at (\i-1,-1) {};
    }
  \foreach \i in {2,...,7} {
    \node (v\i) at (\i-1,0) {};
    }
    
  \node[label=below:$v$] (v1) at (0,-.5){};
  \node[label=below:$w$] (v8) at (7,-.5){};
  \node[above=3pt, fill=none, inner sep=0pt] at (a) {$p_1$};
  \node[above=3pt, fill=none, inner sep=0pt] at (b) {$p_2$};
  \node[above=3pt, fill=none, inner sep=0pt] at (A) {$q_1$};

  \node[above=3pt, fill=none, inner sep=0pt] at (C) {$q_2$};
  
  \draw (v1)--(v2)--(v3)--(v4)--(v5)--(v6)--(v7)--(v8);
  \draw (v1)--(u2)--(u3)--(u4)--(u5)--(u6)--(u7)--(v8);
  \node (a) at (1,1) {};
  \node (b) at (2,1) {};
  \node (A) at (5,.8) {};
  \node (B) at (5.5,1) {};
  \node (C) at (6,.8) {};

  \draw (v2)--(a)--(b)--(v3);
   \draw (v6)--(A)--(B)--(C)--(v7);

  \foreach \i in {1,...,8} {
    \node[below=3pt, fill=none, inner sep=0pt] at (v\i) {};
  }

\end{tikzpicture}
\caption{The gadget $H_2$}
\label{fig:gadget2}
\end{subfigure}
\caption{$H_0, H_1, H_2$ have the unique decomposition property.}
\label{fig:gadget}
\end{figure}

\begin{restatable}{lemma}{Uniquedecompositionlemma}
\label{lem:unique-decomposition}
Let $H_0$, $H_1$, $H_2$ be the gadgets depicted in
Figure~\ref{fig:gadget}.
\begin{itemize}
    \item Each of the gadgets $H_0$, $H_1$, $H_2$ has the unique
    decomposition property and only identity as an automorphism. 
    Moreover, for each $i \in \{0,1,2\}$ and each $k \ge 0$,
    the iterated subdivision $\operatorname{sd}^{k}(H_i)$ has the same two
    properties.
    \item For every $m \ge 3$, the graphs
    $\vec{S}_{m,\infty} \ast \operatorname{sd}^{k}(H_0)$ and
    $\vec{S}_{m,\infty} \ast \operatorname{sd}^{k}(H_1)$, $k \ge 0$, are
    pairwise non-isomorphic and each has maximum degree $m$.
    \item For every $m \ge 2$, the graphs
    $\vec{S}_{m,\infty} \ast \operatorname{sd}^{k}(H_2)$, $k \ge 0$, are
    pairwise non-isomorphic and each has maximum degree $2m$.
\end{itemize}
\end{restatable}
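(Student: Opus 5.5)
The plan is to read off the structure of each gadget from Figure~\ref{fig:gadget} and argue via distinguished cycles. $H_0$ is the path $v=v_1,\dots,v_8=w$ with a $4$-cycle $v_2p_1p_2v_3$ near $v$ and a triangle $q_1Bq_2$ hanging near $w$ (attached at $v_7$ through $q_2$, with a pendant $O$ at $q_1$). $H_1$ has a $4$-cycle $v_2p_1p_2v_3$ near $v$ and a $5$-cycle $v_6q_1Bq_2v_7$ near $w$. $H_2$ is $H_1$ with an extra parallel path $v,u_2,\dots,u_7,w$.

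\textbf{Identity automorphism.} In each gadget the cycles near $v$ and near $w$ have different lengths (or different attachment types), and every automorphism permutes short cycles by length. So any automorphism fixes each local structure setwise. It then fixes the attachment vertices, and hence the whole spine, pointwise. Pendant and degree considerations handle $O$, $B$ and the $u_i$; in $H_2$ the two $v$--$w$ paths are distinguished because only the upper one carries the cycles. Asymmetry ($v\not\mapsto w$) follows.

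\textbf{Subdivision.} $\operatorname{sd}^k$ multiplies all cycle lengths by $2^k$ and preserves degrees, so the same argument applies to $\operatorname{sd}^k(H_i)$.

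\textbf{Unique decomposition.} Fix an arbitrary oriented graph $\vec G$. In $\vec G\ast H_i$, every cycle is either a cycle inside one gadget copy or passes through terminals. I would show that cycles through terminals are long, of length at least the spine length times the girth contribution, because each gadget copy has distance at least $7\cdot 2^k$ between its terminals. Here the spine length exceeding the local cycle lengths is the key inequality. Hence the short cycles of $\vec G\ast H_i$ are exactly the internal $4$-cycles, triangles/$5$-cycles, and (for $H_2$) the $14\cdot2^k$-cycle formed by the two paths, which is still shorter than any cycle using two gadget copies, since such a cycle needs at least two terminal-to-terminal traversals.

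Then let $K$ be an induced copy of $H$. Its distinguished near-$v$ and near-$w$ cycles must map to short cycles inside specific gadget copies, and since $K$ is connected and its spine joins these cycles by a path of the prescribed short length, $K$ must lie inside a single gadget copy. Note that the terminals of a copy may be identified with vertices of other copies, but the internal shortest connections stay inside. Counting vertices and edges then forces $K$ to equal that copy. So the copies of $H$ are exactly the gadget copies, which partition the edges.

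\textbf{Main obstacle.} The hardest step is excluding induced copies of $H$ that straddle terminal vertices, for instance using edges of two neighbouring gadgets at a high-degree center. I would try to handle this by a degree-and-distance argument anchored at the uniquely placed short cycles.

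\textbf{Star graphs.} In $\vec S_{m,\infty}\ast\operatorname{sd}^k(H)$ the center has degree $m$ for $H_0,H_1$ (terminal $v$ has degree $1$ in the gadget) and $2m$ for $H_2$. Every other vertex has degree at most $3$ (resp.\ $4$ at terminals of $H_2$: $2+2$), which gives the maximum-degree claims for $m\ge3$ (resp.\ $m\ge2$). For pairwise non-isomorphism, the length of the shortest cycle is $4\cdot2^k$, which distinguishes different $k$. The two families with the same $k$ differ by the presence of a triangle-type cycle of length $3\cdot2^k$ in $H_0$ versus a $5\cdot 2^k$ cycle in $H_1$.
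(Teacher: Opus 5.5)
Your outline follows the paper's route: the marker cycles fix the $p$- and $q$-positions, the connecting spine confines the copy, and subdivision scales all lengths. However, the step that actually proves the unique decomposition property is the one you leave as the ``main obstacle'', namely that an induced copy $K\cong H$ cannot straddle a terminal. The mechanism you suggest for it does not work. You argue that because the spine of $K$ joins its two marker cycles by a short path, $K$ stays in one copy. But marker cycles of different copies are not far apart: $v_2$ is adjacent to the terminal $v$, and in $H_1,H_2$ the vertex $v_7$ of the $5$-cycle is adjacent to $w$. Suppose a skeleton vertex $x$ is the tail of $e$ and the head of $e'$. This happens, for example, at every non-central vertex of a ray of $\vec S_{m,\infty}$, not just at a high-degree center. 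Then the path $v_2(H_e)\,x\,v_7(H_{e'})$ joins the $4$-cycle of $H_e$ to the $5$-cycle of $H_{e'}$ with length $2$ (length $2\cdot 2^k$ in $\operatorname{sd}^k$), strictly shorter than the in-copy connection of length $3$. For $H_0$ one reaches the triangle of $H_{e'}$ in $3$ steps, versus $5$ inside a copy. So no degree-and-distance bound anchored at the markers can confine $K$ to one copy.

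What is needed is the exact-length claim that the paper's proof relies on. In $H$ the marker cycles are joined by a path of length exactly $3\cdot 2^k$ (for $H_1,H_2$) or $5\cdot 2^k$ (for $H_0$) with interior off the markers, and no such path joins marker cycles of two distinct canonical copies. Proving this takes a small case check. The image of the spine starts at one of the two degree-$3$ vertices of the image $4$-cycle. From $v_3(H_e)$ it is forced along the spine of $H_e$. From $v_2(H_e)$ it leaves through the terminal $x$, and then within the remaining length one of two things happens. Either it cannot reach any $q$-marker at all. Or it meets the $q$-marker of the copy it enters at an interior vertex; this contradicts injectivity, because the $q$-marker of $K$ must be the unique triangle (for $H_0$) or $5$-cycle (for $H_1,H_2$), scaled by $2^k$, through the endpoint. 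Only after this does your vertex and edge count close the argument.

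There are two smaller slips. First, a cycle not inside one copy must traverse at least \emph{three} copies, since the underlying graph of $\vec G$ is simple, which gives length at least $21\cdot 2^k$. Your ``two traversals'' only give $14\cdot 2^k$, which does not exceed the internal $14\cdot 2^k$-cycle of $H_2$. Second, the girth for $H_0$ is $3\cdot 2^k$, not $4\cdot 2^k$. With these fixed, your cycle-length argument for non-isomorphism of the star graphs is correct, and it is simpler than the paper's, which goes through unique decomposition and the growing $p$--$q$ distance. Since $\vec S_{m,\infty}$ is a tree, every cycle lies inside one copy, and the sets of cycle lengths already separate all these graphs.
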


\begin{restatable}{lemma}{GadgetDecodingLemma}
\label{GadgetDecodingLemma}
Let $\vec{G}$ be an oriented (infinite) graph, and let $H$ be an asymmetric
gadget with the unique decomposition property such that every automorphism
of $H$ fixes both terminals. If $X_n \to \vec{G}\ast H$, then there exist
$N\in \mathbb{N}$ and directed graphs $\vec{G}_n$, for all $n\ge N$, such
that
\[
X_n = \vec{G}_n\ast H
\qquad\text{and}\qquad
\vec{G}_n \to \vec{G}.
\]
\end{restatable}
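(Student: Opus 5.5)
The proof runs in two parts: a local decoding step showing that $X_n$ is itself of the form $\vec{G}_n\ast H$, and a second step showing $\vec{G}_n\to\vec{G}$ by transporting balls through the gadget map.

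\textbf{Choice of radius.} Let $h$ be the diameter of $H$. Pick $R$ large compared to $h$, say $R\ge 10h+10$. Since $X_n\to\vec{G}\ast H$ in the pseudometric $d$, there is $N$ such that $\mathfrak{B}_R(X_n)\cong\mathfrak{B}_R(\vec{G}\ast H)$ for all $n\ge N$. Fix such an $n$.

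\textbf{Decoding $X_n$.} Every vertex $x$ of $X_n$ has its $R$-ball isomorphic to some $R$-ball of $\vec{G}\ast H$. In $\vec{G}\ast H$, the induced copies of $H$ partition the edges by the unique decomposition property. Whether a given set of vertices inside a small ball spans an induced copy of $H$ is visible in any ball of radius at least $2h$ containing it. So the following \emph{local} statements about $\vec{G}\ast H$ transfer to $X_n$ through these ball isomorphisms:
\begin{itemize}
\item every edge lies in exactly one induced copy of $H$;
\item in each copy, the images of $v$ and $w$ are well defined, because every automorphism of $H$ fixes both terminals;
\item non-terminal vertices of a copy lie in no other copy;
\item distinct copies meet only in terminal vertices.
\end{itemize}
We then define $\vec{G}_n$ on the set of vertices of $X_n$ that are terminals of some copy. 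For each copy, we add an arc from its $v$-image to its $w$-image. Its image under the gadget operation recovers $X_n$, since the copies exhaust the edges and overlap only at terminals, exactly as in $\vec{G}\ast H$. We must also rule out two copies sharing both terminals, and loops ($v$- and $w$-images identified); both configurations are local and absent in $\vec{G}\ast H$, so they are absent in $X_n$.

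\textbf{Main obstacle.} The delicate point is that ``being an induced copy of $H$ in $X_n$'' must be checked on a ball in which $X_n$ and $\vec{G}\ast H$ agree. This requires the ball to contain not just the copy but enough surrounding structure to see inducedness and uniqueness. This is why $R$ must be chosen as a fixed multiple of $h$, and it should be argued carefully.

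\textbf{Convergence $\vec{G}_n\to\vec{G}$.} Given $r$, take $R\ge (r+2)h+2h$. An $r$-ball of $\vec{G}$ around $x$ is determined by the $R$-ball of $\vec{G}\ast H$ around $x$, via the decoding above applied inside the ball: terminals at $\vec{G}$-distance $k$ lie at graph distance between $k$ and $kh$. Conversely, each $r$-ball of $\vec{G}_n$ is read off from an $R$-ball of $X_n$ at a terminal vertex. Since terminal vertices are locally recognizable, the bijection $\mathfrak{B}_R(X_n)\cong\mathfrak{B}_R(\vec{G}\ast H)$ restricts to terminal-rooted balls. Decoding then yields $\mathfrak{B}_r(\vec{G}_n)\cong\mathfrak{B}_r(\vec{G})$, hence $d(\vec{G}_n,\vec{G})\le 2^{-r}$ for all large $n$.
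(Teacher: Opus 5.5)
Your proposal is correct and follows essentially the same route as the paper: an induced copy of $H$ through a vertex $u$ lies within distance $\operatorname{diam}(H)$ of $u$, so the following are properties of bounded-radius balls and transfer to $X_n$:
\begin{itemize}
\item the edge-partition by induced copies;
\item the well-defined ordered terminals;
\item the fact that copies meet only in terminals.
\end{itemize}
The skeleton balls $B_s(\vec{G}_n,x)$ are then decoded from balls of radius $(s+1)\operatorname{diam}(H)$ at terminal vertices, which are locally recognizable. Your extra check excluding loops and doubled arcs in $\vec{G}_n$ is a detail the paper leaves implicit. Conversely, the paper states explicitly that $\vec{G}_n$ is determined by $X_n$ alone, so it does not change when the radius is increased; you use this only tacitly.
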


\begin{restatable}{corollary}{isolatedproduct}
\label{cor:isolatedproduct}
Let $1 \le m \le D$, and let $H$ be an asymmetric gadget with the unique
decomposition property such that every automorphism of $H$ fixes both
terminals. If $\vec{S}_{m,\infty} \ast H \in \mathfrak{G}_{\le D}$, then
$\vec{S}_{m,\infty} \ast H$ is strongly isolated in $\mathfrak{G}_{\le D}$.
\end{restatable}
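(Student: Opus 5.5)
The plan is to combine Lemma~\ref{IsolatedDirected} with Lemma~\ref{GadgetDecodingLemma}, arguing by contradiction in the sequential form of strong isolation.

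Suppose $G:=\vec{S}_{m,\infty}\ast H$ is not strongly isolated in $\mathfrak{G}_{\le D}$. By the remark characterizing strong isolation, for every $r$ the open set $\mathfrak{G}_{\mathfrak{B}_r(G)}$ contains some $X_r\not\cong G$. Hence we get a sequence $X_n\to G$ in the pseudometric $d$, with every $X_n\in\mathfrak{G}_{\le D}$ and $X_n\not\cong G$.

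Lemma~\ref{GadgetDecodingLemma} applies, since $H$ is asymmetric, has the unique decomposition property, and its automorphisms fix both terminals. It gives $N$ and oriented graphs $\vec{G}_n$ (for $n\ge N$) with $X_n=\vec{G}_n\ast H$ and $\vec{G}_n\to\vec{S}_{m,\infty}$.

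To use Lemma~\ref{IsolatedDirected} we need $\vec{G}_n\in\vec{\mathfrak{G}}_{\le D}$, i.e.\ connected with total degree at most $D$. Connectivity of $\vec{G}_n$ follows from that of $X_n$, because each gadget copy is connected and meets the rest only at terminals. For the degree bound, a vertex $x$ of $\vec{G}_n$ has $\deg_{X_n}(x)\ge\deg_{\vec{G}_n}(x)$, since each incident arc contributes a distinct gadget copy containing at least one edge at $x$ (the terminal has degree at least $1$ in $H$). So $\deg_{\vec{G}_n}(x)\le D$. Alternatively, for large $n$ every ball of $\vec{G}_n$ occurs in $\vec{S}_{m,\infty}$, so all degrees are at most $m\le D$.

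Lemma~\ref{IsolatedDirected} now yields $\varepsilon>0$ such that $d(\vec{G}_n,\vec{S}_{m,\infty})\le\varepsilon$ forces $\vec{G}_n\cong\vec{S}_{m,\infty}$. Since $\vec{G}_n\to\vec{S}_{m,\infty}$, this holds for all large $n$. Then $X_n=\vec{G}_n\ast H\cong\vec{S}_{m,\infty}\ast H=G$, contradicting $X_n\not\cong G$.

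The main care point is the bookkeeping around the two cited lemmas rather than any new idea. First, Lemma~\ref{GadgetDecodingLemma} is stated for convergent sequences, so the sequential reformulation of strong isolation in a pseudometric space must be justified; this is immediate because the basic clopen balls $\mathfrak{G}_{\mathfrak{B}_r(G)}$ form a countable neighborhood base at $G$. Second, the decoded $\vec{G}_n$ must be shown to lie in the space where Lemma~\ref{IsolatedDirected} applies, as checked above. Third, $\ast H$ must respect isomorphism, which holds because the construction is canonical.
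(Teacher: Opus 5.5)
Your proposal is correct and follows the paper's proof essentially step for step. Like the paper, you extract non-isomorphic $X_n\to\vec{S}_{m,\infty}\ast H$, decode them via Lemma~\ref{GadgetDecodingLemma} into skeletons $\vec{G}_n\to\vec{S}_{m,\infty}$, and then use the strong isolation from Lemma~\ref{IsolatedDirected} to reach a contradiction. Your extra check that each $\vec{G}_n$ is connected with total degree at most $D$, so that Lemma~\ref{IsolatedDirected} really applies in $\vec{\mathfrak{G}}_{\le D}$, is correct; the paper leaves this step implicit.
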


\section{Proofs}

\subsection{Proofs for Section~\ref{sec:flows}}

\infbridgeless*
\begin{proof}
Let \(G\) be a connected countably infinite locally finite graph containing
no one-way infinite bridge. Write
\(
V(G)=\{v_1,v_2,\ldots\},
\)
orient \(\{v_i,v_j\}\) from \(v_i\) to \(v_j\) whenever \(i<j\), and take
\(v_1\) as the root. For an edge \(e\) incident with \(v\), define
\[
\operatorname{orient}(v,e):=
\begin{cases}
 1,  & \text{if \(e\) is directed towards \(v\)},\\
-1,  & \text{if \(e\) is directed away from \(v\)}.
\end{cases}
\]

For each \(r\in\mathbb N\), construct a finite multigraph \(G_r\) on
\(V(B_r(G,v_1))\cup\{v_\infty\}\) as follows. Start with \(B_r(G,v_1)\), and for every edge
\(\{v_i,v_j\}\in E(G)\) satisfying
$d(v_1,v_i)=r$ and $d(v_1,v_j)=r+1$, 
add an edge \(\{v_i,v_\infty\}\). Multiple edges are retained.

Equivalently, \(G_r\) is obtained by contracting
\(V(G)\setminus V(B_r(G,v_1))\) to \(v_\infty\) and deleting the resulting
loops. The graph \(G_r\) is bridgeless. Indeed, suppose that \(e\) were a
bridge of \(G_r\), and let \(C\) be the component of \(G_r-e\) not containing
\(v_\infty\). Then \(C\) is finite, and the edge of \(G\) corresponding to
\(e\) is the unique edge leaving \(C\). Hence it is a one-way infinite bridge
of \(G\), a contradiction.

Every edge of \(G_r\) corresponds naturally to an edge of \(G\); we label it
by that edge. Set
\begin{equation}
\label{eq:flow-values}
A_6:=\{1,2,3,4,5\},
\qquad
F_{(e,c)}
\quad(e\in E(G),\ c\in A_6),
\end{equation}
where \(F_{(e,c)}\) asserts that \(e\) receives the value \(c\).
For every edge \(e\), impose
\begin{equation}
\label{eq:edge-constraint}
F_e:=
\bigvee_{c\in A_6}
\left(
F_{(e,c)}
\land
\bigwedge_{d\in A_6\setminus\{c\}}\neg F_{(e,d)}
\right).
\end{equation}
Thus \(F_e\) requires \(e\) to receive exactly one nonzero value.

Let \(v\neq v_\infty\), and let \(e_1,\ldots,e_{\deg(v)}\) be the edges
incident with \(v\), labeled by their corresponding edges of \(G\). Define
\begin{equation}
\label{eq:vertex-constraint}
\operatorname{Sat}(v):=
\left\{
(c_1,\ldots,c_{\deg(v)})\in A_6^{\deg(v)}:
\sum_{j=1}^{\deg(v)}
\operatorname{orient}(v,e_j)c_j
\equiv 0\pmod 6
\right\},
\end{equation}
and put
\[
F_v:=
\bigvee_{(c_1,\ldots,c_{\deg(v)})\in\operatorname{Sat}(v)}
\left(
\bigwedge_{j=1}^{\deg(v)}F_{(e_j,c_j)}
\right).
\]
The formula \(F_v\) enforces Kirchhoff's law modulo \(6\) at \(v\).

Now set
\[
F_{G_r}:=
\left(
\bigwedge_{v\in V(G_r)\setminus\{v_\infty\}}F_v
\right)
\land
\left(
\bigwedge_{e\in E(G_r)}F_e
\right).
\]
Kirchhoff's law at all vertices other than \(v_\infty\) automatically implies
it at \(v_\infty\), since summing all vertex equations counts every edge once
with each sign. Consequently, \(F_{G_r}\) is satisfiable if and only if
\(G_r\) admits a nowhere-zero \(\mathbb Z_6\)-flow.

Since \(G_r\) is finite and bridgeless, Seymour's \(6\)-flow
theorem~\cite{seymour1981nowhere} gives an integer \(6\)-flow on \(G_r\).
Reducing its values modulo \(6\) shows that \(F_{G_r}\) is satisfiable.

For \(G\), consider the set of Boolean constraints
\[
\Sigma_G:=
\{F_e:e\in E(G)\}
\cup
\{F_v:v\in V(G)\}.
\]
Let \(\Phi\subseteq\Sigma_G\) be finite. Since \(G\) is locally finite, only
finitely many edges occur in the formulas belonging to \(\Phi\). Choose
\(r\) large enough that all their endpoints lie in \(B_r(G,v_1)\). Then every
constraint in \(\Phi\) occurs in \(F_{G_r}\), and hence \(\Phi\) is
satisfiable.

Thus every finite subset of \(\Sigma_G\) is satisfiable. By the compactness
theorem for Boolean formulas~\cite{godel1930vollstandigkeit},
\(\Sigma_G\) is satisfiable. The resulting edge values form a weak
nowhere-zero \(\mathbb Z_6\)-flow on \(G\).
\end{proof}

\begin{remark}
The integer-valued version follows from the same proof. Replace \(A_6\) in
\eqref{eq:flow-values} by
\[
A_6^{\mathbb Z}:=\{\pm1,\ldots,\pm5\}
\]
and replace the congruence in \eqref{eq:vertex-constraint} by equality in
\(\mathbb Z\). Seymour's theorem supplies the required integer \(6\)-flows
on the finite graphs \(G_r\), so the same compactness argument applies.
\end{remark}

\subsection{Proofs for Section~\ref{sec:metric}}

\basiclemma*
\begin{proof}
One implication is immediate: every rooted \(r\)-neighborhood of an infinite
graph in either space satisfies the corresponding condition stated above.

For the other implication, suppose that \(F\) is an \(r\)-ball satisfying the
stated condition. There is at least one vertex in \(L_r^{<D}(F)\). To each such vertex \(u\), we attach
\(D-\deg(u)\) disjoint infinite trees by adding one edge from \(u\) to the root
of each tree, so that in total $S(F)$ disjoint trees are attached. Each of these trees is chosen so that all its vertices have degree
\(D\), except for its root, which has degree \(D-1\) before the edge to \(u\) is
added. In this way, we obtain an infinite graph in
\(\mathfrak{G}_{\le D}^{\bullet,\infty}\), respectively in
\(\mathfrak{G}_{=D}^{\bullet,\infty}\), whose \(r\)-ball is equal to \(F\).
Thus \(\mathfrak{G}_{F}^{\bullet,\infty}\) is non-empty in both cases.
\end{proof}
\rootedbridgelessset*
\begin{proof}
To prove that the set of graphs without a one-way infinite bridge is nowhere
dense, we must show that for each non-empty open set
$\mathfrak{G}^{\bullet,\infty}_{F}$, there exists a non-empty open set
$\mathfrak{G}^{\bullet,\infty}_{F^\prime}$ such that every graph in
$\mathfrak{G}^{\bullet,\infty}_{F^\prime}$ contains a bridge. 

\begin{itemize}
    \item[$\mathfrak{G}^{\bullet,\infty}_{\le D}$:]
    For $\mathfrak{G}^{\bullet,\infty}_{\le D}$, we construct an $(r+1)$-ball $F^\prime$ simply as follows: we add one vertex and connect it to one vertex in $L_r^{<D}(F)$ (Figure~\ref{fig:image0}). This constructs an $(r+1)$-ball $F^\prime$ that contains a one-way infinite bridge.
    \item[$\mathfrak{G}^{\bullet,\infty}_{= D}$:]
    For $\mathfrak{G}^{\bullet,\infty}_{= D}$, we must also handle regularity:

\begin{figure}[H]
    \centering
         \centering
     \begin{subfigure}[b]{0.23\textwidth}
         \centering
         \begin{tikzpicture}[scale=.7]
            \draw (-1,0) -- (0,2) -- (1,0) -- cycle;
            \draw[dashed] (-1.5,-1) -- (0,2) -- (1.5,-1) -- (2,-2) -- (1,-2)  -- (+1,-1) --  (-1,-1) -- (-1,-2)  --(-2,-2)-- cycle;

            \draw (0,-1) -- (0,-2)  node[midway, xshift=-0.25cm,yshift=-0.1cm, rotate=90, style={scale=.8}] {$bridge$};
            \node[draw,circle] (u) at (0,-2.3) {};
            \node[draw,circle, fill=black ] (v) at (0,2) {};
            \node[] (v2) at (-1.1,2) {$root$};
            \draw [decorate,decoration={brace,amplitude=5pt,mirror,raise=4ex}]
              (-1.1,2) -- (-1.1,0) node[midway,xshift=-3em]{$F$};
            \draw [decorate,decoration={brace,amplitude=5pt,mirror,raise=4ex}]
              (-2,2) -- (-2,-2) node[midway,xshift=-3em]{$F^\prime$};
        \end{tikzpicture}
        \caption{$F'$ which contains a bridge}
        \label{fig:image0}
     \end{subfigure}
          \hfill
    \begin{subfigure}[b]{0.23\textwidth}
         \centering
         \begin{tikzpicture}[scale=1, every node/.style={draw, circle, inner sep=1.5pt}]
            \node[style={scale=.7}] (v2) at (0,0) {$K_{D+1}-e$};
            \node[fill=red!70] (u1) at (+.2,1) {};
            \node[fill=red!70] (u2) at (-.2,1) {};
            \draw (u1) -- (v2);
            \draw (u2) -- (v2);
        \end{tikzpicture}
        \caption{A gadget to reduce $S(F)$ by 2.}
        \label{fig:image2}
     \end{subfigure}
    \hfill
    \begin{subfigure}[b]{0.23\textwidth}
         \centering
         \begin{tikzpicture}[scale=.5, every node/.style={draw, circle, inner sep=1.5pt}]
            \def\n{6}
            \foreach \number in {1,...,\n}{
                \node[vertex, style={scale=.6}] (N-\number) at ({\number*(360/\n)}:2cm) {$w_\number$};
            }
            \node[fill=red!70] (u1) at ({1*(360/\n)}:3.5cm) {};
            \node[fill=red!70] (u2) at ({2*(360/\n)}:3.5cm) {};
            \draw (u1) -- (N-1);
            \draw (u2) -- (N-2);
            \foreach \number in {1,...,\n}{
                \foreach \y in {3,...,\n}{
                    \draw (N-\number) -- (N-\y);
                }
            }
        \end{tikzpicture}
        \caption{An example for $D=5$, for (b).}
        \label{fig:image3}
     \end{subfigure}
    \hfill
     \begin{subfigure}[b]{0.23\textwidth}
         \centering
         \begin{tikzpicture}[scale=1, every node/.style={draw, circle, inner sep=1.5pt}]
            \node[style={scale=.7}] (v2) at (0,0) {$K_{D+1}-e$};

            \node[fill=red!70] (u1) at (0,1) {};
            \node[label=below:{$u'$}] (u2) at (0,-1) {};
            \draw (u1) -- (v2);
            \draw (u2) -- (v2);
        \end{tikzpicture}
        \caption{A gadget for creating a bridge in $F'$.}
        \label{fig:image1}
     \end{subfigure}
     \hfill
    \begin{subfigure}[b]{0.23\textwidth}
         \centering
         \begin{tikzpicture}[scale=1, every node/.style={inner sep=1.5pt}]
            \node[draw,circle
            ,style={scale=.8}] (v1) at (0,0) {$w$};

            \node[draw,circle, fill=red!70] (u1) at (0,1) {};

            \node[draw,circle]  (u2) at (-1.5,-1) {};
            \node[draw,circle] (u3) at (-.5,-1) {};
            \node (u4) at (.5,-1) {$\dots$};
            \node[draw,circle] (u5) at (1.5,-1) {};

            \node at (-1.5,-1.45) {$u'_1$};
            \node at (-.5,-1.45) {$u'_2$};
            \node at (1.5,-1.45) {$u'_{D-1}$};

            \draw (u1) -- (v1);
            \draw (u2) -- (v1);
            \draw (u3) -- (v1);
            \draw (u5) -- (v1);
        \end{tikzpicture}
        \caption{A treelike expansion to increase the size of $L_r$.}
        \label{fig:image4}
     \end{subfigure}
     \hfill
     \begin{subfigure}[b]{0.23\textwidth}
         \centering
         \begin{tikzpicture}[scale=1, every node/.style={inner sep=1.5pt}]
            \node[draw,circle] (v1) at (0,0) {$w$};
            \node[draw,circle, fill=red!70] (u2) at (-1.5,1) {};
            \node[draw,circle, fill=red!70] (u3) at (-.5,1) {};
            \node (u4) at (.5,1) {};
            \node[draw,circle, fill=red!70]  (u5) at (1.5,1) {};
            \node (v2) at (-1.5,1.3) {$1$};
            \node (v3) at (-.5,1.3) {$2$};
            \node (u4) at (.5,1.3) {$\dots$};
            \node (v5) at (1.5,1.3) {${D}$};
            \draw (u2) -- (v1);
            \draw (u3) -- (v1);
            \draw (u5) -- (v1);
        \end{tikzpicture}
        \caption{A gadget to change the parity of $S(F)$.}
        \label{fig:image5}
     \end{subfigure}
     \hfill
     \begin{subfigure}[b]{0.23\textwidth}
         \centering
         \begin{tikzpicture}[scale=1, every node/.style={inner sep=1.5pt}]
            \node[draw,circle] (v1) at (0,0) {$w$};

            \node[draw,circle] (u2) at (-1.5,-.8) {};
            \node[draw,circle] (u3) at (-.5,-.8) {};
            \node (u4) at (.5,-.8) {$\dots$};
            \node[draw,circle] (u5) at (1.5,-.8) {};

            \node at (-1.5,-1.25) {$u'_1$};
            \node at (-.5,-1.25) {$u'_2$};
            \node at (1.5,-1.25) {$u'_{D-2}$};

            \node[draw,circle, fill=red!70] (g1) at (-.5,+.8) {};
            \node[draw,circle, fill=red!70] (g2) at (+.5,+.8) {};

            \draw (u2) -- (v1);
            \draw (u3) -- (v1);
            \draw (u5) -- (v1);
            \draw (g1) -- (v1);
            \draw (g2) -- (v1);
        \end{tikzpicture}
        \caption{A gadget for creating a vertex cut.}
        \label{fig:image6}
     \end{subfigure}
    
    
    \caption{Gadgets used in the construction of $F'$. Vertices labeled $w$ already have degree $D$ and cannot be extended. The terminals of the gadgets are colored red.}
\end{figure}

\begin{itemize}
\item[1.] If $S(F)$ is odd, we add $K_{D+1}-e$ gadgets (Figure~\ref{fig:image2}) to vertices in $L^{<D}_r(F)$ until $S(F) = 1$. We then add another type of $K_{D+1}-e$ gadget, connecting one of its vertices to $L_r^{<D}(F)$ and the other to a new vertex (Figure~\ref{fig:image1}), where the distance between the new vertex and the old vertex is $4$, and all the other vertices except the new one have degree $D$. This process constructs an $(r+4)$-ball $F^\prime$ such that each of its infinite extensions contains a one-way infinite bridge.

\item[2.] If $S(F)$ is even, and $L^{<D}_r(F)$ contains at least $D$ vertices, we then add a new vertex connected to $D$ of these vertices (Figure~\ref{fig:image5}). At this point, $S(F)$ becomes odd, allowing us to proceed as in the first case. This process constructs an $(r+4)$-ball $F^\prime$ 
such that each of its infinite extensions contains a one-way infinite bridge.

\item[3.] If $S(F)$ is even, and $L^{<D}_r(F)$ contains fewer than $D$ vertices, then we add $S(F)$ tree-like gadgets (Figure~\ref{fig:image4}), resulting in an $(r+2)$-ball $\tilde{F}$. For this new graph, $S(\tilde{F}) = (D-1)^2 \times S(F)$, which is even, and $L^{<D}_{\tilde{r}}(\tilde{F})$ contains more than $D$ vertices. Hence, we can proceed similarly to the second case. This process constructs an $(r+6)$-ball $F^\prime$ 
such that each of its infinite extensions contains a one-way infinite bridge.
\end{itemize}
    
\end{itemize}

In both spaces $\mathfrak{G}^{\bullet,\infty}_{F^\prime}$ is non-empty by the basic lemma (Lemma~\ref{basiclemma}), and every graph in $\mathfrak{G}^{\bullet,\infty}_{F^\prime}$ contains the same one-way infinite bridge.

\end{proof}

\cutvertex*
\begin{proof}
    For $\mathfrak{G}^{\bullet,\infty}_{\le D}$ with $D > 2$ and
    $\mathfrak{G}^{\bullet,\infty}_{= D}$ with odd $D > 2$, the result follows
    directly from Theorem~\ref{rootedbridgelessset}.  
    
    For $\mathfrak{G}^{\bullet,\infty}_{= D}$ with even $D > 2$, we proceed
    similarly to the previous proof. We show that for each
    $\mathfrak{G}^{\bullet,\infty}_{F}$, there exists a non-empty open set
    $\mathfrak{G}^{\bullet,\infty}_{F^\prime}$ such that every graph in
    $\mathfrak{G}^{\bullet,\infty}_{F^\prime}$ contains a cut vertex.  
    
    By parity, $S(F)$ is always even. If $L_r^{<D}(F)$ contains only one vertex,
    that vertex is a cut vertex. If $L_r^{<D}(F)$ contains more than one vertex, we
    add a new vertex and connect it to two vertices in $L_r^{<D}(F)$ and connect it
    to $D-2$ new vertices, creating a gadget as shown in Figure~\ref{fig:image6}.
    Since $S(F)$ remains even, we then add enough  $K_{D+1}-e$ gadgets (as in
    Figure~\ref{fig:image2}) to vertices in $L_r^{<D}(F)$ to make all the vertices
    in $F$ have degree $D$. This process constructs an $(r+2)$-ball $F^\prime$ that
    contains a cut vertex, and it is non-empty by the basic lemma
    (Lemma~\ref{basiclemma}).
\end{proof}

\rootedoneended*
\begin{proof}
    For any integer $r \ge 0$, let $X_r$ be the set of graphs with a finite set~$S$ in the $r$-ball around the root,
    such that removing $S$ creates more than one infinite component. We will show that $X_r$ is nowhere-dense, 
    thus showing that their union is meager. To this end, take an open ball 
    $\mathfrak{G}^{\bullet}_{F}$, we may assume $F$ is an $R$-ball around the root for some $R > r$. 
    By Theorem~\ref{cutvertex} there is an extension $F' \supset F$ such that 
    $\mathfrak{G}^{\bullet}_{F'}$ has no graph without a root-separating cut-vertex; the proof of Theorem~\ref{cutvertex}
    implies that the cut-vertex~$c$ satisfies $d_G(root,c) \ge R > r$. 
    It follows removing any subset of~$B_r(G,root)$ can create at most one infinite component, which we wanted to prove. 
\end{proof}

\rootedinfinitebridgelessset*
\begin{proof}
    The set of graphs with a two-way infinite bridge is a countable union of
    sets of graphs that have a two-way infinite bridge at distance $k$ from the
    root. Therefore, it suffices to prove that having a two-way infinite bridge
    at distance $k$ is nowhere dense.  
    
    Fix $k$. Given an $r$-ball $F$, we construct an $r^\prime$-ball $F^\prime$
    such that no two-way infinite bridge exists at level $k$. 
    To do this, notice that the proof of Theorem~\ref{cutvertex} gives us a ball $F' \supset F$ such that 
    every graph~$G$ in~$\mathfrak{G}^{\bullet,\infty}_{F^\prime}$ has a cut vertex~$c$ at level $> k$ from the root
    such that root is in a finite component of~$G-c$. 
    No such graph can have a two-way infinite bridge at distance $k$ from the root: indeed, removal of the bridge
    leaves us with two components and only one can contain~$c$; the other one is necessarily finite. 
\end{proof}

\edgecoloring*
\begin{proof}
To prove that the set of graphs is nowhere dense, again following the
definition, we must show that for each non-empty open set
\(\mathfrak{G}^{\bullet,\infty}_{F}\), there exists a non-empty open set
\(\mathfrak{G}^{\bullet,\infty}_{F'}\) such that every graph in
\(\mathfrak{G}^{\bullet,\infty}_{F'}\) has edge chromatic number \(D+1\).

Since the edge chromatic number of a graph is at least that of each of its
subgraphs, we proceed as follows. We attach one copy of the gadget \(J_D\)
described below to the \(r\)-ball \(F\). The gadget \(J_D\) is
\(D\)-regular at all its internal vertices, is not \(D\)-edge-colorable, and
has two deficient vertices of degree \(D-1\). We connect one of these
vertices to a vertex in \(L_r^{<D}(F)\) and the other to a new vertex,
similarly to what is shown in Figure~\ref{fig:image1} for even \(D\). We
complete the remaining deficiencies as in Lemma~\ref{basiclemma}. This
constructs an \(r'\)-ball \(F'\).

Every graph in \(\mathfrak{G}^{\bullet,\infty}_{F'}\), considered either in
\(\mathfrak{G}^{\bullet,\infty}_{=D}\) or in
\(\mathfrak{G}^{\bullet,\infty}_{\leq D}\), has edge chromatic number
\(D+1\). Indeed, it is not \(D\)-edge-colorable since it contains \(J_D\) as
a subgraph. On the other hand, every finite subgraph is
\((D+1)\)-edge-colorable by Vizing's theorem. The upper bound then follows
from~\cite{bruijn1951colour}: a graph is \(c\)-colorable if and only if each
of its finite subgraphs is \(c\)-colorable. The same statement applies to
edge colorability by passing to the line graph.

We now describe the construction of \(J_D\) and explain why it has the
required property. For every \(D\geq 3\), there exists a finite
\(D\)-regular graph \(G\) with
\(
\chi'(G)=D+1.
\)
For even \(D\), take \(G=K_{D+1}\): each color class in a
\(D\)-edge-coloring would be a perfect matching, which is impossible because
\(G\) has an odd number of vertices.

For odd \(D\), we construct a finite \(D\)-regular graph with a bridge.
Start with \(K_{D+2}\), choose a vertex \(x\) and two other vertices \(a,b\),
and delete the edges \(xa\) and \(xb\), together with a perfect matching on
the remaining \(D-1\) vertices. The resulting graph \(H\) has degree \(D-1\)
at \(x\) and degree \(D\) at every other vertex. Take two disjoint copies of
\(H\) and join their exceptional vertices by an edge \(e\). The resulting
graph \(G\) is \(D\)-regular, and \(e\) is a bridge.

Such a graph cannot be \(D\)-edge-colorable. Indeed, in a
\(D\)-edge-coloring every color class is a perfect matching. Each component
of \(G-e\) has odd order. Therefore, every color class not containing \(e\)
would give a perfect matching inside each component of \(G-e\), which is
impossible. Thus, by Vizing's theorem, \(\chi'(G)=D+1\).

We now remove a non-bridge edge \(uv\) from \(G\) and set
\(
J_D:=G-uv.
\)
Such an edge exists; for example, in the construction for odd \(D\), the
edge \(ab\) lies in a triangle and is therefore not a bridge. Choosing a
non-bridge edge ensures that \(J_D\) remains connected. Moreover,
\(
\chi'(J_D)=D+1.
\)
Suppose, for a contradiction, that \(J_D\) is \(D\)-edge-colorable. The
vertices \(u\) and \(v\) have degree \(D-1\), whereas every other vertex has
degree \(D\). Hence, precisely one color is missing at \(u\) and precisely
one color is missing at \(v\).

If \(|V(G)|\) is odd, then every color class misses an odd, and hence at
least one, number of vertices. Thus, there are at least \(D>2\) missing
incidences, but there are only two missing incidences in total, a
contradiction.

If \(|V(G)|\) is even, then every color class misses an even number of
vertices. Since \(u\) and \(v\) are the only vertices at which a color is
missing, the same color must be missing at both \(u\) and \(v\). Coloring
\(uv\) with this color yields a \(D\)-edge-coloring of \(G\), again a
contradiction. Therefore, by Vizing's theorem, \(\chi'(J_D)=D+1\).

Thus, for even \(D\), the gadget \(J_D\) is \(K_{D+1}\) with any edge
removed; for odd \(D\), it is the \(D\)-regular graph with a bridge
constructed above with a non-bridge edge removed. In both cases \(J_D\) is
connected, has exactly two vertices of degree \(D-1\), and satisfies
\(\chi'(J_D)=D+1\). Attaching one new edge to each of these two vertices
makes them \(D\)-regular while preserving \(J_D\) as a subgraph.
\end{proof}

\subsection{Proofs for Section~\ref{sec:unrooted}}
The proofs in this subsection are arranged according to logical dependence
rather than theorem numbering. We prove Theorem~\ref{bridge} before
Theorem~\ref{regularnonisomorphictrees}, since the latter uses the
construction from the former. We then prove the results concerning the
gadget construction before applying them to
Theorems~\ref{stronglyisolated}, \ref{th:notmeager}~and~\ref{th:unrootededgecoloring}. 

\nonisomorphictrees*

\begin{proof}
We construct a class of infinite trees using a class of infinite binary sequences.
Let
\[
\Omega=\{\omega=(\omega_n)_{n\in \mathbb{N}}\in\{0,1\}^{\mathbb{N}}:
\omega_1=1 \text{ and } \omega \text{ contains infinitely many }1\text{'s}\}.
\]
There are $2^{\aleph_0}$ such sequences.

Given such a sequence \(\omega\in\Omega\), we construct an infinite tree
\(T_\omega\) (Figure~\ref{fig:infinitetree}) as follows. Let
\(
\mathfrak{T}:=\{T_\omega:\omega\in\Omega\}
\)
be the corresponding class of infinite trees.

Let \((T_i)_{i\ge 1}\) be a fixed enumeration of the finite rooted trees with
maximum degree at most \(D\); see Figure~\ref{fig:treeslist}. The set of these
trees is denoted by
\(
\mathcal{T}^D:=\{T_i:i\ge 1\}.
\)
\begin{figure}[H]   
        \centering

    \begin{subfigure}{0.95\textwidth}
        \centering
        \begin{tikzpicture}[every node/.style={circle,draw,fill=black,inner sep=1pt}, level distance=1cm]

            \node[label=above:$T_1$, inner sep=3pt] (T1root) at (0,0) {}
                child {node {}};

            \node[label=above:$T_2$, inner sep=3pt] (T2root) at (2.5,0) {}
                child {node {}}
                child {node {}};

            \node[label=above:$T_3$, inner sep=3pt] (T3root) at (5.5,0) {}
                child {node {}}
                child {node {}}
                child {node {}};

            \node[label=above:$T_4$, inner sep=3pt] (T4root) at (9,0) {}
                child {node {}
                    child {node {}}
                };

            \node[draw=none,fill=none] at (12,0) {$\cdots$};

        \end{tikzpicture}
        \caption{The beginning of a fixed ordering of the finite rooted trees in \(\mathfrak{G}^{\bullet}_{\leq 3}\).}
        \label{fig:treeslist}
    \end{subfigure}

    \medskip

    \begin{subfigure}{0.95\textwidth}
        \centering
        \begin{tikzpicture}[thick,scale=.9]
            \node[circle, draw, minimum size=0.8cm] at (0,0) (N1) {$1$};

            \node[circle, draw, minimum size=0.8cm] at (2,0) (E1) {$2$};
            \node[circle, draw, minimum size=0.8cm] at (4,0) (E2) {3};
            \node[circle, draw, minimum size=0.8cm] at (6,0) (E3) {$4$};

            \draw (N1) -- (E1);
            \draw (E1) -- (E2);
            \draw (E2) -- (E3);
            \draw (E3) -- +(1.5,0);
            \node at (8,0) {$\cdots$};

            \node[circle, draw, minimum size=0.8cm] at (-2,0) (O1) {};
            \node[circle, draw, minimum size=0.8cm] at (-4,0) (O2) {};
            \node[circle, draw, minimum size=0.8cm] at (-6,0) (O3) {};

            \draw (O1) -- (N1);
            \draw (O2) -- (O1);
            \draw (O3) -- (O2);
            \draw (O3) -- +(-1.5,0);
            \node at (-8,0) {$\cdots$};
        \end{tikzpicture}
        \caption{The indexing of the vertices on the two-way infinite path.}
        \label{fig:pathindex}
    \end{subfigure}

    \medskip

    \begin{subfigure}{0.95\textwidth}
        \centering
        \begin{tikzpicture}[thick,scale=.9]
            \node[circle, draw, minimum size=0.8cm] at (0,0) (o3) {3};

            \node[circle, draw, minimum size=0.8cm] at (2,0) (o4) {4};
            \node[circle, draw, minimum size=0.8cm] at (4,0) (o5) {5};
            \node[circle, draw, minimum size=0.8cm] at (6,0) (o6) {6};

            \node[circle, draw, minimum size=0.8cm] at (-2,0) (o2) {2};
            \node[circle, draw, minimum size=0.8cm] at (-4,0) (o1) {1};
            \node[circle, draw, minimum size=0.8cm] at (-6,0) (o0) {};

            \draw (o0) -- (o1) -- (o2) -- (o3)-- (o4)--(o5)-- (o6);
            \draw (o6) -- +(1.5,0);
            \node at (8,0) {$\cdots$};
            \draw (o0) -- +(-1.5,0);
            \node at (-8,0) {$\cdots$};

            \node[regular polygon, regular polygon sides=3, draw, minimum size=0.8cm] at (-4,1.2) (T1) {$T_1$};
            \node[regular polygon, regular polygon sides=3, draw, minimum size=0.8cm] at (-2,1.2) (T2) {$T_2$};
            \node[regular polygon, regular polygon sides=3, draw, minimum size=0.8cm] at (2,1.2) (T3) {$T_3$};

            \draw (o1) -- (T1);
            \draw (o2) -- (T2);
            \draw (o4) -- (T3);
        \end{tikzpicture}
        \caption{An example of the construction of \(T_\omega\) for \(\omega=1101001\cdots\).}
        \label{fig:infinitetree}
    \end{subfigure}
    \caption{The construction of \(T_\omega\).}
    \label{fig:tree-construction}
\end{figure}
Begin with a two-way infinite path. Choose one vertex and label it \(1\);
then label the vertices to its right by \(2,3,4,\ldots\). The vertices to
its left are left unlabeled (Figure~\ref{fig:pathindex}). Let \(n_i\) denote the position of the \(i\)-th occurrence of \(1\) in \(\omega\). For each \(i\ge 1\), attach the
tree \(T_i\) to the vertex indexed by \(n_i\), connecting it through one of its farthest leaves. Since each sequence contains infinitely many \(1\)'s, all the finite rooted
trees are attached in this construction. Hence
\(
\mathcal{T}^D \subseteq \mathfrak{B}(T_\omega).
\)
Moreover, because \(T_\omega\) contains no cycles, it follows that
\(
\mathcal{T}^D = \mathfrak{B}(T_\omega).
\)

Suppose \(T_\omega \cong T_{\omega'}\). The isomorphism must map the two-way
infinite paths to each other, being the unique such paths, and must match the
attached copies of \(T_1\), which determines the vertex labeled \(1\). The
first copy of \(T_2\) to its right then fixes the direction of labelling, so the
isomorphism preserves all labels. Hence the same tree is attached at each
labeled position in \(T_\omega\) and \(T_{\omega'}\), giving \(\omega=\omega'\).
Thus distinct sequences yield distinct trees, and since \(\Omega\) is
uncountable, so is \(\mathfrak{T}\).
\end{proof}

\bridge*
\begin{proof}
\begin{figure}[H]
\centering
\input{figuers/figure4v2}
\caption{Two graphs with the same collection of rooted balls \(\mathfrak{B}\).
The first graph is bridgeless, while the second contains a two-way infinite bridge.}
\label{fig:twoladders}
\end{figure}

For each \(r\ge 0\), let \(\mathcal{A}^{(=D)}_r\) be the collection of all
rooted \(r\)-balls \(F\) that occur in \(D\)-regular connected infinite graphs and such that
\(F^{+}\) is bridgeless, where \(F^{+}\) is obtained from \(F\) by adding a new
vertex \(x_F\) and joining \(x_F\) to each vertex $u \in L^{<D}_r(F)$ by $D - \deg_F(u)$ parallel edges.  Set
\(
        \mathcal{A}^{(=D)}
        :=
        \bigcup_{r\ge 0}\mathcal{A}^{(=D)}_r .
\)
Indeed, if
\(G\in\mathfrak{G}^{\infty}_{=D}\) has no one-way infinite bridge, then, for
every \(r\ge 0\) and every \(v\in V(G)\), the rooted ball \(B_r(G,v)\) belongs
to \(\mathcal{A}^{(=D)}_r\).

Fix an ordering of \(\mathcal{A}^{(=D)}\), as in
Figure~\ref{fig:graphslist}, and write
\(
        G_1,G_2,G_3,\ldots
\)
for the resulting enumeration. If \(G_i\) is an \(r_i\)-ball, write
\(
        L^{<D}(G_i):=L_{r_i}^{<D}(G_i).
\)

Let \(L_1^{D}\) be obtained from the two-ended \(D\)-regular ladder
\(P_{\mathbb{Z}}\,\square\,K_{D-1}\)---the Cartesian product of the two-way
infinite path \(P_{\mathbb{Z}}\) with the complete graph \(K_{D-1}\)---by
subdividing infinitely many consecutive edges along one fixed copy of
\(P_{\mathbb{Z}}\) and coloring the new vertices blue. Each blue vertex has
degree \(2\), while every other vertex has degree \(D\). For \(D\ge 3\),
\(L_1^{D}\) is bridgeless; see Figure~\ref{fig:generalDladder1}.

We now construct \(L_2^{D}\) from \(L_1^{D}\) by the local modification shown in
Figure~\ref{fig:generalDladder2}: we contract layer \(-1\) (that is,
\(\{-1\}\,\square\,K_{D-1}\)) to a single vertex and layer \(-2\) to a single
vertex. Here, contraction is understood to delete loops and merge parallel edges. The edges between these two layers then collapse to a single edge, which
is a two-way infinite bridge; all blue vertices lie on the same side of it. As
in \(L_1^{D}\), each blue vertex has degree \(2\) and every other vertex has degree
\(D\).

We attach the enumerated graphs
\(
        G_1,G_2,G_3,\ldots
\)
to \(L_1^{D}\) in order. For each \(i\), we attach the vertices of
\(L^{<D}(G_i)\) to the next \(S(G_i)\) available blue vertices of \(L_1^{D}\)
(by \emph{available} we mean those whose degree is still less than $D$),
proceeding from left to right. We make sure not to create parallel edges in this process. 
Eventually, every blue vertex reaches
degree $D$. Let \(H_1^{D}\) be the resulting graph. Then
\(H_1^{D}\) is \(D\)-regular. Moreover, \(H_1^{D}\) is bridgeless: \(L_1^{D}\) is
bridgeless, and each \(G_i^{+}\) is bridgeless, so attaching the deficient
boundary vertices of \(G_i\) to the ladder creates no bridge; see
Figure~\ref{fig:connectladder}.

We construct \(H_2^{D}\) in the same way, using \(L_2^{D}\) instead of \(L_1^{D}\). Again,
\(H_2^{D}\) is \(D\)-regular. The same argument excludes every bridge in \(H_2^{D}\), 
other than the distinguished edge of \(L_2^{D}\),  which is a two-way infinite bridge.
Hence \(H_2^{D}\) has a two-way infinite bridge and no one-way
infinite bridge.

Each \(G_i\in\mathcal{A}^{(=D)}\) appears as a rooted ball in both \(H_1^{D}\)
and \(H_2^{D}\). Indeed, if \(G_i\) is an \(r_i\)-ball, then its \(r_i\)-ball
around the root is unchanged by the attachments, since the new edges are added
only at vertices of \(L_{r_i}(G_i)\). Hence
\(
        \mathcal{A}^{(=D)}
        \subseteq
        \mathfrak{B}(H_1^{D}),\mathfrak{B}(H_2^{D}).
\)

Conversely, \(H_1^{D}\) is bridgeless and \(H_2^{D}\) has no one-way infinite bridge.
Therefore, every finite rooted ball of either graph belongs to
\(\mathcal{A}^{(=D)}\), and so
\(
        \mathfrak{B}(H_1^{D}),\mathfrak{B}(H_2^{D})
        \subseteq
        \mathcal{A}^{(=D)}.
\)
Consequently,
\(
        \mathfrak{B}(H_1^{D})
        =
        \mathfrak{B}(H_2^{D}).
\)

Thus \(d(H_1^{D},H_2^{D})=0\), while \(H_1^{D}\) is bridgeless and \(H_2^{D}\) contains a two-way infinite bridge.
\end{proof}

\regularnonisomorphic*
\begin{proof}
In the construction of \(L_2^{D}\) in the proof of
Theorem~\ref{bridge}, we contracted each of the layers indexed by \(-1\)
and \(-2\) to a single vertex, where contraction is understood to delete
loops and merge parallel edges. More generally, for each \(k\geq 1\), we
may independently choose whether to perform the same contraction on the
pair of layers \(\{-(3k+1),-(3k+2)\}\).
We let $K$ be the set of such $k$, where we do the contraction; 
we only perform the construction for infinite sets~$K$.
For every such choice, after attaching the enumerated graphs as in the
proof of Theorem~\ref{bridge}, the resulting graph remains
\(D\)-regular and has the same collection of rooted balls as \(H_2^{D}\).

Since there are \(2^{\aleph_0}\) possible sets~$K$, this produces
\(2^{\aleph_0}\) graphs with the same collection of rooted balls. It
remains to show that distinct choice sets yield non-isomorphic graphs.
We may identify the original bridge $\{-1,-2\}$ in any isomorphic copy: it is the only bridge 
such that its removal creates one bridgeless component. 
This allows us to distinguish the ladder.  
Every isomorphism must map the original bridge to the original bridge and
preserve its two sides. In particular, it preserves the ordering of the
remaining two-way infinite bridges along the other side of the original
bridge.
The positions of these bridges relative to the original bridge determine
exactly which pairs \(\{-(3k+1),-(3k+2)\}\) were contracted. Hence, if two
resulting graphs are isomorphic, their choice sets~$K$ must coincide.
Therefore, distinct choice sets yield pairwise non-isomorphic graphs.
\end{proof}
\isolateisnotmeager*
\begin{proof}
Let $S$ be a set containing a strongly isolated point $G$, and suppose, for
contradiction, that $S=\bigcup_{n} A_n$ is a countable union of nowhere dense
sets. Then $G\in A_n$ for some $n$. Since $G$ is strongly isolated, the set
$\{G\}$ is open and nonempty, and its only nonempty open subset is $\{G\}$
itself, which meets $A_n$. Hence $A_n$ is not nowhere dense, a contradiction.
\end{proof}

\IsolatedDirected*
\begin{proof}
Infinite outgoing stars (Figure~\ref{fig:star}) are strongly isolated, since
$\vec{S}_{m,\infty}$ is the only graph with the following $1$-balls:
a directed outgoing path of length $2$ with one incoming and one outgoing edge at
the root ($\vec{P_2^\bullet}$), and a directed star with the center as root
($\vec{S}_{m,1}^\bullet$). That is,
\[
\mathfrak{B}_1(\vec{S}_{m,\infty}) = \{\vec{P_2^\bullet},\, \vec{S}_{m,1}^\bullet\}.
\]
Now suppose $\mathfrak{B}_1(G) = \{\vec{P_2^\bullet},\, \vec{S}_{m,1}^\bullet\}$. Since $G$ is connected, starting from the vertex $v$ with
$B_1(G,v) = \vec{S}_{m,1}^\bullet$, every neighbor $w$ of $v$ has indegree at
least one, and hence $B_1(G,w) = \vec{P_2^\bullet}$. The same applies to all
subsequent neighbors, and by induction $G \cong \vec{S}_{m,\infty}$.
\end{proof}

\Uniquedecompositionlemma*
\begin{proof}
Let $\vec{G}$ be an oriented graph, and let $H$ be one of the gadgets
$H_0$, $H_1$, $H_2$. Every edge of $\vec{G}\ast H$ lies in at least one
copy of $H$, namely the copy $H_{(x,y)}$ from the construction; it
remains to show that this copy is unique.

Any isomorphism from $H$ to an induced subgraph of $\vec{G}\ast H$ maps
vertices labeled $q$ to vertices labeled $q$, because these are the
only places in the graph where a path with a triangle is attached (in $H_0$) or a $C_5$
is glued (in $H_1$, $H_2$). Similarly, it maps vertices labeled $p$ to
vertices labeled $p$, because these are the only places with a glued
$C_4$. Since the isomorphism preserves the labels $p$ and $q$, each
vertex of the path containing them is uniquely determined; in
particular, the isomorphism maps $v$ to a copy of $v$ and $w$ to a copy
of $w$. 
The two marker cycles in \(H_0\) are joined by a path of length \(5\), and those in \(H_1,H_2\) by a path of length \(3\), 
with interiors disjoint from the cycles. No such path of the respective length joins the corresponding marker cycles in distinct canonical gadgets.
Hence, for $H_0$ and $H_1$, all vertices of the copy are
uniquely determined. For $H_2$, since $\vec{G}$ is an orientation of a simple graph,
each unordered pair $\{v, w\}$ appears in only one copy, so the induced
length~7 path between them is also uniquely determined. Thus every induced copy
of $H$ is one of the copies $H_{(x,y)}$, which proves the unique
decomposition property for $H_0$, $H_1$, $H_2$.

For a gadget $H$, let $\operatorname{sd}(H)$ denote the graph obtained
from $H$ by subdividing every edge, that is, by inserting a new vertex
in the interior of each edge. For each of the gadgets $H_0$, $H_1$,
$H_2$, the subdivided gadget also has the unique decomposition property:
the same argument applies, with the lengths of the distinguished cycles
and paths doubled. Hence every $\operatorname{sd}^k(H)$, $k \ge 0$, has
the unique decomposition property.

Since subdivision does not increase the maximum degree, the maximum
degree of $\vec{S}_{m,\infty}\ast\operatorname{sd}^k(H)$ is independent
of $k$: it equals $m$ for $H \in \{H_0, H_1\}$ and $m \ge 3$, and $2m$
for $H = H_2$ and $m \ge 2$. Moreover, by the unique decomposition
property, any isomorphism between two such graphs maps gadget copies to
gadget copies; since iterated subdivision strictly increases the
distance between the vertices labeled $p$ and $q$, the graphs
$\vec{S}_{m,\infty}\ast\operatorname{sd}^k(H)$, $k \ge 0$, are pairwise
non-isomorphic.
\end{proof}

\GadgetDecodingLemma*
\begin{proof}
Write $\Delta:=\operatorname{diam}(H)\ge 1$. 
Since every automorphism of $H$ fixes both terminals, each induced copy of $H$ has a determined ordered pair of terminals, 
hence a well-defined orientation.

\medskip
\noindent\textbf{Reduction to a local statement.}
It suffices to prove that, for every $s\in\mathbb{N}$,
\begin{equation}\label{eq:GadgetLocal}
d\!\left(X_n,\,\vec{G}\ast H\right)<2^{-(s+1)\Delta}
\;\Longrightarrow\;
X_n=\vec{G}_n\ast H\ \text{for a unique }\vec{G}_n,
\ \text{and}\ d\!\left(\vec{G}_n,\vec{G}\right)\le 2^{-s}.
\end{equation}
Indeed, assume \eqref{eq:GadgetLocal}. Since $X_n\to\vec{G}\ast H$, there is
$N\in\mathbb{N}$ with $d(X_n,\vec{G}\ast H)<2^{-2\Delta}$ for all $n\ge N$;
applying \eqref{eq:GadgetLocal} with $s=1$, each such $X_n$ has a unique
decomposition $X_n=\vec{G}_n\ast H$, so the skeleton $\vec{G}_n$ is determined by
$X_n$ alone. Moreover, for every $s\in\mathbb{N}$, convergence gives
$d(X_n,\vec{G}\ast H)<2^{-(s+1)\Delta}$ for all sufficiently large $n$, whence
$d(\vec{G}_n,\vec{G})\le 2^{-s}$ eventually. Therefore $\vec{G}_n\to\vec{G}$, as
required.

It remains to prove \eqref{eq:GadgetLocal}. Fix $s$, set $r:=(s+1)\Delta$, and
suppose $d(X_n,\vec{G}\ast H)<2^{-r}$. By the definition of $d$, this gives
\[
\mathfrak{B}_{r}(X_n)\cong\mathfrak{B}_{r}(\vec{G}\ast H),
\]
and, by truncating each rooted ball, $\mathfrak{B}_{k}(X_n)\cong\mathfrak{B}_{k}(\vec{G}\ast H)$
for every $k\le r$; in particular for $k=\Delta$.

\medskip
\noindent\textbf{Existence and uniqueness of the decomposition.}
We note two facts visible inside a ball of radius $\Delta$.
\begin{enumerate}
    \item[\textnormal{(a)}]
    If a vertex $u$ lies in an induced copy $K\cong H$, then any two vertices of
    $K$ are at distance at most $\Delta$, so $K$ is contained in the $\Delta$-ball
    around $u$. Hence whether a given edge incident to $u$ lies in such a copy,
    and in how many, is determined by that ball.
    \item[\textnormal{(b)}]
    If two induced copies of $H$ share a vertex $z$, both are contained in the
    $\Delta$-ball around $z$, so whether they meet only in a common terminal is
    likewise determined by that ball.
\end{enumerate}
Because $H$ has the unique decomposition property, in $\vec{G}\ast H$ every edge
lies in exactly one induced copy of $H$, 
and any two distinct copies intersect in at most one vertex, which is a terminal in both copies.
By (a)--(b) these are properties of $\Delta$-balls, and
$\mathfrak{B}_\Delta(X_n)\cong\mathfrak{B}_\Delta(\vec{G}\ast H)$; hence they hold
in $X_n$ as well. Thus the induced copies of $H$ in $X_n$ partition $E(X_n)$, and
distinct copies meet only at shared terminals.

Consequently $X_n$ is obtained by gluing disjoint copies of $H$ along terminals.
Let $\vec{G}_n$ be the oriented graph whose vertices are the terminal vertices of
$X_n$, with one arc $(x,y)$ for each induced copy of $H$ having ordered terminals
$(x,y)$; the orientation is well defined because $H$ is asymmetric. Replacing each
arc by the corresponding copy recovers $X_n$, so $X_n=\vec{G}_n\ast H$. This
decomposition is unique: any decomposition must use exactly the induced copies of
$H$, which are intrinsic to $X_n$, and the order of their terminals is forced by
the asymmetry of $H$.

\medskip
\noindent\textbf{Closeness of the skeletons.}
A vertex of a graph of the form $\vec{G}\ast H$ is a terminal (equivalently, a
skeleton vertex) precisely when its $\Delta$-ball has the corresponding local
shape, which is detectable within radius $\Delta\le r$. Hence the isomorphism
$\mathfrak{B}_{r}(X_n)\cong\mathfrak{B}_{r}(\vec{G}\ast H)$ restricts to the rooted
$r$-balls based at skeleton vertices.

Finally, in any graph $\vec{G}\ast H$ and for any skeleton vertex $x$, the rooted
skeleton ball $B_s(\vec{G},x)$ is determined by the rooted ball
$B_{(s+1)\Delta}(\vec{G}\ast H,x)$: a skeleton vertex at skeleton-distance at most
$s$ from $x$ lies within distance $s\Delta$ in $\vec{G}\ast H$, and every copy of
$H$ incident to it extends a further $\Delta$ at most, hence lies inside the ball
of radius $(s+1)\Delta$. Collapsing each such copy to its arc therefore
reconstructs $B_s(\vec{G},x)$ exactly, with no arc missed or added. Applying this
to $X_n=\vec{G}_n\ast H$ and to $\vec{G}\ast H$, and using the
skeleton-restricted isomorphism at radius $r=(s+1)\Delta$, we obtain
\[
\mathfrak{B}_s(\vec{G}_n)\cong\mathfrak{B}_s(\vec{G}),
\]
and therefore $d(\vec{G}_n,\vec{G})\le 2^{-s}$. This establishes
\eqref{eq:GadgetLocal} and completes the proof.
\end{proof}
\isolatedproduct*

\begin{proof}
Suppose not. Then, for every \(n\in\mathbb{N}\), there exists
\(X_n\in\mathfrak{G}_{\leq D}\) such that
\(X_n\not\cong\vec{S}_{m,\infty}\ast H\) and
\(
d\bigl(X_n,\vec{S}_{m,\infty}\ast H\bigr)\leq 2^{-n}.
\)
Thus,
\(X_n\to\vec{S}_{m,\infty}\ast H\). By
Lemma~\ref{GadgetDecodingLemma}, for all sufficiently large \(n\), we have
\(X_n=\vec{G}_n\ast H\), where
\(\vec{G}_n\to\vec{S}_{m,\infty}\). By
Lemma~\ref{IsolatedDirected},
\(\vec{G}_n\cong\vec{S}_{m,\infty}\) for all sufficiently large \(n\).
Hence,
\(X_n\cong\vec{S}_{m,\infty}\ast H\), a contradiction.
\end{proof}

\stronglyisolated*
\begin{proof}
By Lemma~\ref{lem:unique-decomposition}, for every \(k\geq 0\), the gadget
\(\operatorname{sd}^k(H_0)\) satisfies the hypotheses of
Corollary~\ref{cor:isolatedproduct}. Moreover, the graphs
\(\vec{S}_{3,\infty}\ast\operatorname{sd}^k(H_0)\), \(k\geq 0\), are
pairwise non-isomorphic and have maximum degree \(3\). Hence, for every
\(D\geq 3\), they form an infinite family of pairwise non-isomorphic
strongly isolated infinite graphs in \(\mathfrak{G}_{\leq D}\).

It remains to show that there are at most countably many strongly isolated
graphs. Let $G$ be a strongly isolated graph. Then there exists $r\in\mathbb N$
such that the open neighborhood $\mathcal{U}_{2^{-(r-1)}}(G)$ contains no graph other than
$G$. Equivalently, $G$ is the unique element of $\mathfrak G_{\mathfrak F}$,
where $\mathfrak F=\mathfrak B_r(G)$.

For each fixed $r$, the degree bound $D$ implies that there are only finitely
many $r$-balls up to isomorphism. Hence, there are only finitely many possible
sets $\mathfrak B_r(G)$. Therefore, only finitely many strongly isolated graphs
can be isolated at scale $r$.

Since every strongly isolated graph is isolated at some scale $r\in\mathbb N$,
the set of strongly isolated graphs is a countable union of finite sets and is
therefore countable. Together with the infinite family constructed above, this
shows that there are countably infinitely many strongly isolated graphs.
\end{proof}

\notmeager*
\begin{proof}
By Lemma~\ref{lem:unique-decomposition}, the gadgets \(H_0,H_1,H_2\),
depicted in Figure~\ref{fig:gadget}, satisfy the hypotheses of
Corollary~\ref{cor:isolatedproduct}. Therefore,
\(\vec{S}_{3,\infty}\ast H_0\) and
\(\vec{S}_{3,\infty}\ast H_1\) are strongly isolated in
\(\mathfrak{G}_{\leq D}\) for every \(D\geq 3\), and \(\vec{S}_{2,\infty}\ast H_2\) is strongly isolated in
\(\mathfrak{G}_{\leq D}\) for every \(D\geq 4\). Since these graphs are
infinite, they are also strongly isolated in the corresponding subspaces
\(\mathfrak{G}_{\leq D}^{\infty}\). We may therefore apply
Lemma~\ref{lem:isolateisnotmeager}.

\medskip
\noindent\textnormal{(i)}
The graph \(\vec{S}_{3,\infty}\ast H_0\) has a one-way infinite bridge.
Hence, for every \(D\geq 3\), the set of infinite graphs with a one-way
infinite bridge is not meager in \(\mathfrak{G}_{\leq D}^{\infty}\).

The graph \(\vec{S}_{3,\infty}\ast H_1\) has no one-way infinite bridge.
Hence, for every \(D\geq 3\), the set of infinite graphs without a one-way
infinite bridge is not meager in \(\mathfrak{G}_{\leq D}^{\infty}\).

\medskip
\noindent\textnormal{(ii)}
Moreover, \(\vec{S}_{3,\infty}\ast H_1\) has a two-way infinite bridge.
Hence, for every \(D\geq 3\), the set of infinite graphs with a two-way
infinite bridge but no one-way infinite bridge is not meager in
\(\mathfrak{G}_{\leq D}^{\infty}\).

For the complementary claim, we use graph $\vec{S}_{1,\infty}\ast H_1$.

\medskip
\noindent\textnormal{(iii)}
The graph \(\vec{S}_{2,\infty}\ast H_2\) is bridgeless. Hence, for every
\(D\geq 4\), the set of infinite bridgeless graphs is not meager in
\(\mathfrak{G}_{\leq D}^{\infty}\).
For the complementary claim, we use graph $\vec{S}_{3,\infty}\ast H_1$. 
\end{proof}

\unrootededgecoloring*
\begin{proof}

\begin{figure}[H]
    \centering

\begin{tikzpicture}[
  scale=.9,
  every node/.style={circle, fill=black, inner sep=1.5pt}
]
  \foreach \i in {1,...,8} {
    \node (v\i) at (\i-1,0) {};
  }
  \draw (v1)--(v2)--(v3)--(v4);
  \draw[densely dotted] (v4)--(v5);
  \draw (v5)--(v6)--(v7)--(v8);

  \node (a) at (1,1) {};
  \node (b) at (2,1) {};
  \node (A) at (5,.8) {};
  \node (B) at (5.5,1) {};
  \node[fill=none, draw] (O) at (4,.8) {$J_D$};
  \node (C) at (6,.8) {};

  \draw (v2)--(a)--(b)--(v3);
  \draw (C)--(B)--(A)--(C);
  \draw (A)--(O);
  \draw (C)--(v7);

  \node[above=3pt, fill=none, inner sep=0pt] at (a) {$p_1$};As
  \node[above=3pt, fill=none, inner sep=0pt] at (b) {$p_2$};
  \node[above=3pt, fill=none, inner sep=0pt] at (A) {$q_1$};
  \node[above=3pt, fill=none, inner sep=0pt] at (C) {$q_2$};
  \node[below=3pt, fill=none, inner sep=0pt] at (v1) {$v$};
  \node[below=3pt, fill=none, inner sep=0pt] at (v8) {$w$};

  \draw[
    decorate,
    decoration={brace, mirror, amplitude=5pt}
  ] (2,-.55)--(6,-.55)
    node[midway, below=8pt, rectangle, fill=none, inner sep=0pt]
    {Length $\ell > |V(J_D)|$};
\end{tikzpicture}

\caption{The gadget $C^D$, which has the unique decomposition property and is not $D$-edge-colorable.}
\label{fig:edge1}
\end{figure}
We first show that \(C^D\) (Figure~\ref{fig:edge1}) has the unique decomposition property. 
In any copy of $C^D$ in a graph $\vec{G}\ast C^D$ we can identify the copy of $J_D$
(for $D > 3$ by degrees, for $D = 3$ by the unique diamond, $K_4 - e$; although we need to choose 
edge $uv = ab$ to remove in the construction of $J_D$). After removing this, we use 
Lemma~\ref{lem:unique-decomposition} that says $H_0$ as the unique decomposition property. 
Thus, by Corollary~\ref{cor:isolatedproduct},
both \(\vec{S}_{3,\infty}\ast C^D\) and \(\vec{S}_{3,\infty}\ast H_0\) are
strongly isolated graphs with maximum degree at most \(D\), for every \(D\geq3\).

Since \(H_0\) is $3$-edge colorable and $\vec{S}_{3}$ does not have any cycle, \(\vec{S}_{3,\infty}\ast H_0\) is \(3\)-edge-colorable
(we use one 3-edge-coloring of $H_0$ and permute the colors in each copy, going from the center outwards). 
On the other hand, \(C^D\) contains the obstruction \(J_D\) constructed in the proof of
Theorem~\ref{edgecoloring}. Hence, \(\vec{S}_{3,\infty}\ast C^D\) is not
\(D\)-edge-colorable and, by Vizing's theorem and compactness, has chromatic
index \(D+1\). Therefore, by Lemma~\ref{lem:isolateisnotmeager}, both the class
of \(D\)-edge-colorable graphs and its complement are nonmeager in
\(\mathfrak{G}_{\leq D}^{\infty}\). Consequently, neither class is meager nor
comeager.
\end{proof}

\section{Concluding remarks and open problems}
We have seen that the Baire category framework does not classify as typical or atypical properties of 
unrooted graphs in $\mathfrak{G}_{\le D}^\infty$: 
one-way and two-way infinite bridges for $D\ge3$, bridgelessness for $D\ge4$
as typical or atypical in the unrooted case: each of these classes (and also each of their complements) contains a strongly
isolated graph, and hence none of them is meager.
The strongly isolated graphs constructed in this paper are not regular and have small cuts. This suggests the following questions.

\begin{question}
\label{q:regularisolated}
  For each $k,D\ge3$, does
  $\mathfrak{G}_{=D}^{\infty}$ contain any strongly isolated graph?
  More specifically, does it contain a strongly isolated bridgeless graph,
  or one with a two-way infinite bridge and no one-way infinite bridge?
  For odd $D\ge3$, does it also contain a strongly isolated graph with a
  one-way infinite bridge?
  Is there a cyclically $k$-edge-connected strongly isolated graph?
\end{question}

\begin{question}
  Does $\mathfrak{G}_{\leq 3}^{\infty}$ contain a strongly isolated bridgeless graph?
  Is there a cyclically $k$-edge-connected subcubic strongly isolated graph?
\end{question}

More important is to decide whether strongly isolated graphs are just a convenient proof technique
or a somewhat pathological part of the space that affects genericity. In particular: 
\begin{question}
  If we remove all strongly isolated graphs from $\mathfrak{G}_{\leq D}^{\infty}$ 
  will the class of graphs with one-/two-way infinite bridge be (co)meager?
\end{question}

\section*{Acknowledgments}
We thank Hector Buffi{\`e}re for the helpful discussion about the earlier version of our results. 
Earlier versions of this research have been presented at the fruitful Dynasnet meetings ``Zámeček'' 
and also at Eurocomb 2025. 
ChatGPT and Claude were used to improve the grammar and presentation of the manuscript and also to 
find some of our references. During final polishing, ChatGPT noticed that our techniques prove 
Corollary~3.8 (which we have not noticed ourselves). 

\printbibliography
\end{document}